\documentclass[11pt]{amsart}
\usepackage[utf8]{inputenc}
\usepackage{fontenc}
\usepackage{amsfonts}
\usepackage{amssymb}
\usepackage{amsmath}
\usepackage{amsthm}\usepackage{mathtools}
\usepackage{enumerate}
\usepackage{hyperref}\usepackage{enumitem}
\usepackage{mathrsfs}
\usepackage{tikz}\usetikzlibrary{calc}
\usepackage{marginnote}
\usepackage{xcolor,enumitem}
\usepackage{soul}\usepackage{tikz}
\usepackage[all,cmtip]{xy}



\newcommand{\e}{\varepsilon}


\newcommand{\N}{\mathbb{N}}
\newcommand{\C}{\mathbb{C}}

\newcommand{\cV}{\mathcal{V}}

\newcommand{\cE}{\mathcal{E}}

\newcommand{\eps}{\varepsilon}

\newtheorem*{acknowledgments}{Acknowledgments}




\newcommand{\NN}{\mathbb{N}}

\newcommand{\cstu}{\mathrm{C}^*_u}
\newcommand{\csts}{\mathrm{C}^*_s}

\newtheorem*{rigprob*}{Rigidity Problem for uniform Roe Algebras}
\newtheorem*{rigprobcorona*}{Rigidity Problem for uniform Roe Coronas}


\newcommand{\cstar}{$\mathrm{C}^*$}

\newcommand{\bbP}{\mathbb P}
\newcommand{\cC}{\mathcal{C}}
\newcommand{\cU}{\mathcal{U}}

\newcommand{\cF}{\mathcal{F}}
\newcommand{\cP}{\mathcal{P}}
\newcommand{\bbN}{\mathbb{N}}
\newcommand{\cB}{\mathcal{B}}
\newcommand{\cK}{\mathcal{K}}

\newcommand{\cA}{\mathcal A} 





\newtheorem{theorem}{Theorem}[section]
\newtheorem*{theorem*}{Theorem}
\newtheorem{proposition}[theorem]{Proposition}
\newtheorem{problem}[theorem]{Problem}
\newtheorem*{proposition*}{Proposition}
\newtheorem{lemma}[theorem]{Lemma}
\newtheorem*{lemma*}{Lemma}
\newtheorem{corollary}[theorem]{Corollary}
\newtheorem*{corollary*}{Corollar}

\newtheorem*{fact*}{Fact}
\theoremstyle{definition}
\newtheorem{definition}[theorem]{Definition}
\newtheorem*{definition*}{Definition}
\newtheorem{claim}[theorem]{Claim}
\newtheorem*{claim*}{Claim}

\newtheorem*{conjecture*}{Conjecture}

\newtheorem{assumption}[theorem]{Assumption}

\theoremstyle{remark}

\newtheorem*{example*}{Example}
\newtheorem{remark}[theorem]{Remark}
\newtheorem*{remark*}{Remark}

\newtheorem*{note*}{Note}
\newtheorem*{question*}{Question}


\newcommand{\norm}[1]{\left\lVert #1 \right\rVert}


\DeclareMathOperator{\supp}{supp}
\DeclareMathOperator{\id}{id}

\DeclareMathOperator{\Ad}{Ad}

\newcommand{\bbC}{\mathbb C} 

\numberwithin{equation}{section}

\newcounter{my_enumerate_counter}
\newcommand{\pushcounter}{\setcounter{my_enumerate_counter}{\value{enumi}}}
\newcommand{\popcounter}{\setcounter{enumi}{\value{my_enumerate_counter}}}

\usepackage{enumitem}

\begin{document}

\title[General Uniform Roe algebra rigidity]{General Uniform Roe algebra rigidity}%
\author[B. M. Braga]{Bruno M. Braga}
\address[B. M. Braga]{University of Virginia, 141 Cabell Drive, Kerchof Hall, P.O. Box 400137, Charlottesville, USA}
\email{demendoncabraga@gmail.com}
\urladdr{https://sites.google.com/site/demendoncabraga}

\author[I. Farah]{Ilijas Farah}
\address[I. Farah]{Department of Mathematics and Statistics,
York University,
4700 Keele Street,
Toronto, Ontario, Canada, M3J
1P3} 
\address{Matemati\v cki Institut SANU\\
Kneza Mihaila 36\\
11\,000 Beograd, p.p. 367\\
Serbia}
\email{ifarah@mathstat.yorku.ca}
\urladdr{http://www.math.yorku.ca/$\sim$ifarah}

\author[A. Vignati]{Alessandro Vignati}
\address[A. Vignati]{Universit\'e de Paris, 
Institut de Math\'ematiques de Jussieu - Paris Rive Gauche (IMJ-PRG)\\
B\^atiment Sophie Germain\\
8 Place Aur\'elie Nemours \\ 75013 Paris, France}
\email{ale.vignati@gmail.com}
\urladdr{http://www.automorph.net/avignati}

\subjclass[2010]{}
\keywords{}
\date{\today}%
\begin{abstract}
We generalize all known results on rigidity of uniform Roe algebras to the setting of arbitrary uniformly locally finite coarse spaces. For instance, we show that isomorphism between uniform Roe algebras of uniformly locally finite coarse spaces whose uniform Roe algebras contain only compact ghost projections implies that the base spaces are coarsely equivalent. Moreover, if one of the spaces has property A, then the base spaces are bijectively coarsely equivalent. We also provide a characterization for the existence of an embedding onto hereditary subalgebra in terms of the underlying spaces. As an application, we partially answer a question of White and Willett about Cartan subalgebras of uniform Roe algebras.
\end{abstract}
\maketitle

\section{Introduction}\label{SectionIntro}
This paper deals with the rigidity problems for uniform Roe algebras associated to uniformly locally finite coarse spaces. Roe algebras, and their uniform versions, provide a bridge between operator algebras and large scale geometry. They were introduced by Roe in \cite{Roe1993}; since then, they were extensively studied because of their connections with geometric group theory, (higher) index theory, and consequently their applications to manifold topology and geometry (\cite{Roe1996}), and recently with topological phases of matter (\cite{Kubota2017}).

The original motivation for these rigidity problems stems from their connection with the Baum--Connes and Novikov conjectures (\cite{HigsonRoe1995,Yu2000}). In short, the rigidity problems ask what properties of a coarse space $X$ are remembered by the associated uniform Roe algebra. 
Functoriality of the uniform Roe algebras construction implies that a bijective coarse equivalence between spaces lifts to an isomorphism between the algebras, and that an injective coarse embedding lifts to an embedding into a hereditary \cstar-subalgebra. Similarly, in case of uniformly locally finite (u.l.f.) spaces, a coarse equivalence lifts to a stable isomorphism between the algebras, and a coarse embedding lifts to an embedding into a hereditary \cstar-subalgebra of the stabilization. 
 
\begin{problem}[Isomorphism rigidity problem]
Let $(X,\cE)$ and $(Y,\cE')$ be uniformly locally finite coarse spaces such that $\cstu(X,\cE)$ and $\cstu(Y,\cE')$ are isomorphic. Does it follow that $(X,\cE)$ and $(Y,\cE')$ are (bijectively) coarsely equivalent?
\end{problem}

\begin{problem}[Embedding rigidity problem]\label{ProbEmb}
Let $(X,\cE)$ and $(Y,\cE')$ be uniformly locally finite coarse spaces such that $\cstu(X,\cE)$ is isomorphic to a hereditary subalgebra of $\cstu(Y,\cE')$. Does it follow that $(X,\cE)$ (injectively) coarsely embed into $(Y,\cE')$? 
\end{problem}

In case we can prove the existence of a bijective coarse equivalence, or of an injective coarse embedding, we refer to \emph{strong} rigidity, while if we can only prove that the two spaces are coarsely equivalent (in case of isomorphisms of the uniform Roe algebras), or that $X$ coarsely embeds into $Y$ (in case of embeddings into a hereditary subalgebra), we refer to \emph{weak} rigidity\footnote{These should not be confused with the concepts of rigidity and superrigidity, see \cite[Remark 4.21]{WhiteWillett2017}.}.

 The first partial answer to the  isomorphism rigidity problem was given by \v{S}pakula and Willett in \cite{SpakulaWillett2013AdvMath}, where the authors showed that weak rigidity holds for metric spaces with Yu's property A. In \cite{BragaFarah2018} and \cite{BragaFarahVignati2019}, it was shown that, again in the case of isomorphism, strong rigidity holds for property A metric spaces, and that weak rigidity holds for metric spaces such that all sparse subspaces yield only compact ghost projections. This technical condition is satisfied by every uniformly locally finite metric space which satisfies the coarse Baum-Connes conjecture with coefficients (see \cite[Theorem 5.3]{BragaChungLi2019}).
 
In the context of not necessarily metrizable coarse spaces, the rigidity question remained mostly open. In fact, the only positive results so far assumed the spaces to be \emph{small} (this means that their coarse structure is generated by `not too many' subsets, see \cite[Definition 4.2]{BragaFarah2018}) or that the isomorphism is absolute under passing to forcing extensions of the set-theoretic universe (\cite[\S 9]{BragaFarah2018}). We substantially improve the status of this problem by generalizing rigidity phenomena to the general setting of uniformly locally finite coarse spaces. 

\begin{theorem}\label{TheoremMainIsomor}
Suppose that $(X,\mathcal E)$ and $(Y,\mathcal E')$ are uniformly locally finite coarse spaces such that $Y$ has property A. The following are equivalent.
\begin{enumerate}
\item\label{1.Main} The spaces $(X,\cE)$ and $(Y,\cE')$ are bijectively coarsely equivalent.
\item\label{1.1Main} There exists an isomorphism $\cstu(X,\cE)\to \cstu(Y,\cE')$ sending $\ell_\infty(X) $ to $\ell_\infty(Y)$.
\item \label{2.1Main} The $^*$-algebras $\cstu[X,\cE]$ and $\cstu[Y,\cE']$ are isomorphic.
\item \label{2.Main} The  \cstar	-algebras  $\cstu(X,\cE)$ and $\cstu(Y,\cE)$ are isomorphic.
\end{enumerate}
\end{theorem}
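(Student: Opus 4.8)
The plan is to prove the cycle $(\ref{1.Main})\Rightarrow(\ref{1.1Main})\Rightarrow(\ref{2.1Main})\Rightarrow(\ref{2.Main})\Rightarrow(\ref{1.Main})$, with the full force of property A concentrated in the final step. The implication $(\ref{1.Main})\Rightarrow(\ref{1.1Main})$ is pure functoriality: a bijective coarse equivalence $f\colon X\to Y$ determines the permutation unitary $U_f\colon\ell_2(X)\to\ell_2(Y)$, $U_f\delta_x=\delta_{f(x)}$, and because $f$ transports $\cE$ to $\cE'$ the conjugation $\Ad(U_f)$ sends finite-propagation operators to finite-propagation operators, hence restricts to an isomorphism $\cstu(X,\cE)\to\cstu(Y,\cE')$; conjugation by a permutation unitary visibly carries diagonal operators to diagonal operators, so $\ell_\infty(X)$ is mapped onto $\ell_\infty(Y)$.

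For $(\ref{1.1Main})\Rightarrow(\ref{2.1Main})$ I would first recover a bijective coarse equivalence from the diagonal-preserving isomorphism $\Phi$, a step which needs \emph{no} assumption on $Y$. Since $\Phi$ maps the atoms of $\ell_\infty(X)$ to those of $\ell_\infty(Y)$, it induces a bijection $f\colon X\to Y$ with $\Phi(e_x)=e_{f(x)}$. Inspecting the corner $e_x\cstu(X,\cE)e_y$, which is one-dimensional when the pair $(x,y)$ is controlled and zero otherwise, gives $\Phi(e_{xy})=\lambda_{xy}e_{f(x)f(y)}$ with $\lvert\lambda_{xy}\rvert=1$; in particular a pair is controlled in $\cE$ if and only if its image is controlled in $\cE'$. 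To make this uniform, fix $E\in\cE$ and consider the finite-propagation operator $T_E=\sum_{(x,y)\in E}e_{xy}$: every entry of $\Phi(T_E)$ lying on $(f\times f)(E)$ has modulus one, and since $\Phi(T_E)\in\cstu(Y,\cE')$ is a norm limit of finite-propagation operators, the set of its entries of modulus at least $1/2$ is contained in a single entourage of $\cE'$. Hence $(f\times f)(E)\in\cE'$, and symmetrically for $f^{-1}$, so $f$ is a bijective coarse equivalence. As $f$ then transports $\cE$ to $\cE'$, $\Phi$ carries finite propagation to finite propagation and restricts to a $^*$-isomorphism $\cstu[X,\cE]\to\cstu[Y,\cE']$. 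The step $(\ref{2.1Main})\Rightarrow(\ref{2.Main})$ I would isolate as an automatic-continuity lemma: a $^*$-isomorphism of the algebraic uniform Roe algebras is isometric for the ambient operator norm and therefore extends to the $\Cstar$-completions, the point being that this norm is an intrinsic invariant of the $^*$-algebra structure, detected through the uniformly locally finite system of matrix units together with the diagonal $\ell_\infty$.

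The substance of the theorem is $(\ref{2.Main})\Rightarrow(\ref{1.Main})$, and this is where property A of $Y$ is indispensable. A bare $\Cstar$-isomorphism $\Phi\colon\cstu(X,\cE)\to\cstu(Y,\cE')$ need not respect the diagonals at all: $\Phi(\ell_\infty(X))$ is only \emph{some} Cartan masa of $\cstu(Y,\cE')$, possibly badly scrambled relative to $\ell_\infty(Y)$. The plan is to show that, under property A, $\Phi(\ell_\infty(X))$ is spatially conjugate to $\ell_\infty(Y)$ inside $\cstu(Y,\cE')$, so that after composing with an inner automorphism we may assume $\Phi$ preserves the diagonals and then invoke the previous paragraph; note that this simultaneously delivers the bijection and hence the \emph{bijective} coarse equivalence asserted in $(\ref{1.Main})$. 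The engine is the ghost dichotomy: property A of $Y$ forces every ghost operator in $\cstu(Y,\cE')$ to be compact, and it is precisely this that lets one straighten the scrambled Cartan masa back onto the canonical one.

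I expect this straightening to be the main obstacle and to absorb essentially all the difficulty as well as the entire hypothesis on $Y$. The only available control on an arbitrary isomorphism is that the discrepancy between $\Phi(\ell_\infty(X))$ and $\ell_\infty(Y)$ is governed by operators whose off-diagonal mass escapes to infinity, i.e.\ by ghosts; property A converts ``ghost'' into ``compact'', after which a quasi-locality estimate, an approximate diagonalization of the projections $\Phi(e_x)$, and an index/matching argument (to keep the resulting equivalence bijective rather than merely coarse) should complete the conjugation. This is exactly the Cartan-subalgebra uniqueness phenomenon underlying the partial answer to the White–Willett question advertised in the abstract, and the real work is to carry it out in the full generality of uniformly locally finite coarse spaces rather than for metric spaces.
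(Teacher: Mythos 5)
Your handling of the three ``soft'' implications \eqref{1.Main} $\Rightarrow$ \eqref{1.1Main} $\Rightarrow$ \eqref{2.1Main} $\Rightarrow$ \eqref{2.Main} is essentially in line with what the paper dispatches by citing \cite[Theorem 8.1]{BragaFarah2018}, though your automatic-continuity step for \eqref{2.1Main} $\Rightarrow$ \eqref{2.Main} is asserted rather than proved: uniqueness of the $\Cstar$-norm on a $^*$-algebra is not automatic (group algebras of nonamenable groups carry distinct $\Cstar$-norms), so ``the norm is an intrinsic invariant'' needs an actual argument. The real issue is \eqref{2.Main} $\Rightarrow$ \eqref{1.Main}, which is the entire content of the theorem, and there your proposal has a genuine gap. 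You reduce everything to the claim that, under property A, the masa $\Phi(\ell_\infty(X))$ is conjugate to $\ell_\infty(Y)$ by a unitary inside $\cstu(Y,\cE')$, and you then explicitly defer ``essentially all the difficulty'' to an unspecified combination of a quasi-locality estimate, an approximate diagonalization of the projections $\Phi(e_{xx})$, and an index/matching argument. No mechanism is given for how property A (or compactness of ghosts) produces the conjugating unitary, and the deferred claim is a Cartan-uniqueness statement at least as strong as the theorem itself. Indeed, the paper's logical order is the reverse of yours: it \emph{deduces} its partial answer to the White--Willett Cartan question (Corollary \ref{CorWhillettWhite}) \emph{from} Theorem \ref{TheoremMainIsomor}, rather than proving rigidity from Cartan uniqueness. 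As it stands, the proposal is a strategy outline whose crucial step is missing, not a proof.

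For comparison, the paper's proof of \eqref{2.Main} $\Rightarrow$ \eqref{1.Main} never straightens the masa. It writes $\Phi=\Ad U$ for an isometry $U$ (Lemma \ref{LemmaPhiStronglyContAndU}), studies the finite sets $Y_{x,\delta}=\{y\in Y: \|\Phi(e_{xx})e_{yy}\Phi(1)\|>\delta\}$ and their counterparts $X_{y,\delta}$, and uses property A in the form of operator norm localization (Corollary \ref{CorPropA}, fed through Lemmas \ref{LemmaBla1}, \ref{LemmaBla2}, \ref{prop:largesets}, and \ref{lemma:injective}) to obtain the concentration estimates $\|\Phi(\chi_A)(1-\chi_{Y_{A,\delta}})\|<\eps$ together with the counting inequality $|A|\le|Y_{A,\delta}|$. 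Hall's marriage theorem (\S\ref{S.Graphs}) then yields injections $f\colon X\to Y$ and $g\colon Y\to X$ with $f(x)\in Y_{x,\delta}$ and $g(y)\in X_{y,\delta}$; the graph-colouring Lemma \ref{L.Claim} together with Lemmas \ref{lemma:entourages} and \ref{lemma:coarse} shows that $f$ and $g$ are coarse and expanding and that $g\circ f$ and $f\circ g$ are close to the identity maps; finally Cantor--Bernstein converts the pair of injections into the bijective coarse equivalence required by \eqref{1.Main}. If you wish to salvage your plan, you would have to prove unitary conjugacy of these Cartan masas for u.l.f.\ coarse spaces with property A from scratch, which is not easier than---and in this paper is a consequence of, not a tool for---the direct route just described.
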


The implications \eqref{1.Main} $\Rightarrow$ \eqref{1.1Main} $\Rightarrow$ \eqref{2.Main} are trivial, and the equivalences \eqref{1.Main} $\Leftrightarrow$ \eqref{1.1Main} $\Leftrightarrow$ \eqref{2.1Main} are given by \cite[Theorem 8.1]{BragaFarah2018}. Those implications do not require property A. The implication \eqref{2.Main} $\Rightarrow$ \eqref{1.Main} is our contribution to Theorem \ref{TheoremMainIsomor}; it is the generalization of \cite[Theorem 1.11]{BragaFarahVignati2018} to not necessarily metrizable spaces, and its proof requires property A.

Regarding `weak rigidity', we generalize the findings of \cite{BragaFarahVignati2018} and \cite{BragaFarahVignati2019}.

\begin{theorem}\label{TheoremMainIsomor2}
Suppose that $(X,\mathcal E)$ and $(Y,\mathcal E')$ are uniformly locally finite coarse spaces such that $\cstu(X,\cE)$ and $\cstu(Y,\cE')$ have only compact ghost projections. Suppose that $\cstu(X,\mathcal E)$ and $\cstu(Y,\mathcal E')$ are isomorphic. Then $X$ and $Y$ are coarsely equivalent.
\end{theorem}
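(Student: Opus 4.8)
\begin{proofsketch}
The plan is to manufacture a coarse equivalence directly from a given $\ast$-isomorphism $\Phi\colon\cstu(X,\cE)\to\cstu(Y,\cE')$ by tracking the images of the point-mass projections, using the compact-ghost-projection hypothesis to force these images to be coarsely localized.

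First I would note that $\Phi$ carries minimal projections to minimal projections. Since $\cstu(X,\cE)\supseteq\cK(\ell_2(X))$, a projection is minimal in $\cstu(X,\cE)$ precisely when it has rank one in $B(\ell_2(X))$ (a projection of rank at least two dominates a rank-one projection, which already lies in $\cK(\ell_2(X))$), and symmetrically for $Y$. Writing $e_x=|\delta_x\rangle\langle\delta_x|$, this produces unit vectors $\xi_x\in\ell_2(Y)$, unique up to a phase, with $\Phi(e_x)=|\xi_x\rangle\langle\xi_x|$; as $e_xe_{x'}=0$ for $x\ne x'$, the family $(\xi_x)_{x\in X}$ is orthonormal, and the same argument applied to $\Phi^{-1}$ yields an orthonormal family $(\eta_y)_{y\in Y}$ in $\ell_2(X)$.

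The core of the proof, and its main obstacle, is to show that the $\xi_x$ are coarsely localized, uniformly in $x$: for a fixed small $\e>0$ one attaches to each $x$ a finite set $S_x\subseteq Y$ carrying all but $\e$ of the norm of $\xi_x$, and one must find a single controlled set $F\in\cE'$ with $S_x\times S_x\subseteq F$ for every $x$. I would argue by contradiction. A failure produces an infinite, coarsely separated (sparse) set $D=\{x_n\}\subseteq X$ along which the $\xi_{x_n}$ spread beyond every controlled set of $\cE'$. Put $Q=\sum_n e_{x_n}\in\ell_\infty(X)\subseteq\cstu(X,\cE)$; then $\Phi(Q)$ is a projection in $\cstu(Y,\cE')$ dominating the infinite orthogonal family $\bigl(|\xi_{x_n}\rangle\langle\xi_{x_n}|\bigr)_n$, hence of infinite rank and non-compact, while the sparseness of $D$ and the unbounded spreading of the $\xi_{x_n}$ force the matrix coefficients $\langle\delta_y,\Phi(Q)\delta_{y'}\rangle$ to tend to $0$ as $y,y'$ leave finite subsets of $Y$, so that $\Phi(Q)$ is a ghost. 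This contradicts the hypothesis that $\cstu(Y,\cE')$ has only compact ghost projections. It is exactly here that the hypothesis is indispensable: it excludes the possibility that $\Phi$ differs from an honest coarse-geometric map by a spread-out, ghostly twist.

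Granting localization, I would define $f\colon X\to Y$ by choosing $f(x)\in S_x$, and symmetrically $g\colon Y\to X$ from $(\eta_y)$ using the hypothesis on $\cstu(X,\cE)$. To see that $f$ is controlled, fix $E\in\cE$; by uniform local finiteness its graph splits into finitely many controlled partial translations $t$, each of which gives a partial isometry $V_t\in\cstu(X,\cE)$ of propagation inside $E$ with $V_te_xV_t^*=e_{t(x)}$. Then $\Phi(V_t)\in\cstu(Y,\cE')$ satisfies $\Phi(V_t)\xi_x=\xi_{t(x)}$ up to a phase; approximating $\Phi(V_t)$ in norm by a finite-propagation operator and invoking localization confines $(f(x),f(t(x)))$ to a single controlled set of $\cE'$, independently of $x$ and $t$, so $f$ (and likewise $g$) is controlled. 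Orthonormality of $(\xi_x)$ and $(\eta_y)$ together with $\Phi^{-1}\Phi=\id$ shows that $g\circ f$ and $f\circ g$ lie at bounded distance from the respective identities, whence $f$ and $g$ are mutually coarsely inverse; in particular $f$ is a coarse embedding, and its coarse surjectivity is once more an instance of the ghost hypothesis, now applied to $\cstu(X,\cE)$ through $\Phi^{-1}$, ruling out a coarsely separated part of $Y$ avoided by $f(X)$. Thus $f$ witnesses a coarse equivalence between $X$ and $Y$.
\end{proofsketch}
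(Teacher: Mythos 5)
Your outline (minimal projections give orthonormal vectors $\xi_x$, localize them, choose $f(x)\in S_x$, check closeness of the compositions) follows the same general three-step strategy as the paper, but the core step---proving uniform localization by contradiction via a ghost projection---has a genuine gap, and it sits exactly where the whole difficulty of this theorem lies, namely non-metrizability. Three distinct problems. (i) The negation of ``there is a single $F\in\cE'$ with $S_x\times S_x\subseteq F$ for all $x$'' is quantified over all of $\cE'$: for each $F$ there is a witness $x_F$. Extracting from this an infinite \emph{sequence} $\{x_n\}$ ``spreading beyond every controlled set'' requires a countable cofinal family in $\cE'$, i.e.\ metrizability (Lemma~\ref{L.infty-valued-metric}); in general one only gets a net indexed by $\cE'$, and the construction $Q=\sum_n e_{x_n}$ cannot even be set up. (ii) You then ask the witnesses to form a sparse set, but sparse sequences need not exist in u.l.f.\ coarse spaces: as noted in \S\ref{ss:geom}, $(\N,\cE_{\max})$ has none. (iii) Even granting (i) and (ii)---say in the metric case---the claim that $\Phi(Q)$ is a ghost does not follow from the spreading of the $\xi_{x_n}$: a vector such as $\xi_{x_n}=\tfrac{1}{\sqrt 2}(\delta_{y_n}+\delta_{z_n})$ with $(y_n,z_n)$ escaping every entourage ``spreads beyond every controlled set'' yet contributes matrix coefficients of modulus $\tfrac12$ that never decay, so $\Phi(Q)$ need not be a ghost and no contradiction results. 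Ruling out exactly this configuration is the content of the step, and it cannot be done by testing ghostliness of $\Phi(Q)$ itself.

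The paper circumvents all three problems by decoupling two statements that your sketch fuses into one. The ghost hypothesis is used \emph{only} to prove uniform mass concentration (Lemma~\ref{lemma:injections2}): setting $Y_{x,\delta}=\{y\mid \|\Phi(e_{xx})e_{yy}\Phi(1)\|>\delta\}$, for every $\eps>0$ there is $\delta>0$ with $\|(1-\chi_{Y_{x,\delta}})\Phi(e_{xx})\|<\eps$ for all $x$. The negation of \emph{this} is quantified over $\delta=1/n$, hence countable irrespective of the coarse structure, so a sequence of witnesses $x_n$ exists; and the ghost is built not as $\Phi(Q)$ but as a compression $\sum_n \chi_{D_n}\Phi(e_{x_nx_n})^2\chi_{D_n}$ to finite sets $D_n$ disjoint from $Y_{x_n,1/n}$---where the coefficients are small \emph{by definition} of $Y_{x_n,1/n}$---which is then shown to retain norm $\geq\eps^2/2$ and is perturbed by a compact into a noncompact ghost projection. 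The localization statements you actually need, namely that $\bigcup_x Y_{x,\delta}\times Y_{x,\delta}$ is an entourage and that any selector $f(x)\in Y_{x,\delta}$ is coarse and expanding, are then proved with no ghosts and no sequences at all (Lemmas~\ref{lemma:entourages} and~\ref{lemma:coarse}), using the operator-norm characterization of entourages (Lemma~\ref{lemma:struct}) together with finite graph colourings (Lemma~\ref{L.Claim}). This decoupling is the paper's main technical novelty; as written, your argument is essentially the metric-space proof from the earlier literature and does not extend to the general u.l.f.\ coarse spaces that the theorem is about.
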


The following simple corollary of Theorem \ref{TheoremMainIsomor2} answers a question raised by White and Willett in \cite[Remark 3.4]{WhiteWillett2017} in the scenario of property A (see  Corollary \ref{CorWhillettWhite}  below).

\begin{corollary}\label{cor:metr}
Let $(X,\mathcal E)$ and $(Y,\mathcal E')$ be uniformly locally finite coarse spaces such that $\cstu(X,\cE)$ and $\cstu(Y,\cE')$ have only compact ghost projections, and let $\lambda$ be a cardinal. Suppose $\cstu(X,\cE)$ is isomorphic to $\cstu(Y,\cE')$. If $\mathcal E'$ is generated by a set of size $\lambda$, so is $\mathcal E$. In particular if $(Y,\mathcal E')$ is metrizable, so is $(X,\mathcal E)$.
\end{corollary}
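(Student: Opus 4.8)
The only content of this corollary beyond Theorem \ref{TheoremMainIsomor2} is coarse-geometric: I plan to show that a coarse equivalence between uniformly locally finite spaces transports a generating set for one coarse structure to a generating set for the other of the same cardinality.

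By Theorem \ref{TheoremMainIsomor2}, the isomorphism $\cstu(X,\cE)\cong\cstu(Y,\cE')$ together with the compact-ghost-projection hypothesis produces a coarse equivalence between $X$ and $Y$: fix coarse maps $f\colon X\to Y$ and $g\colon Y\to X$, so that $f\times f$ and $g\times g$ carry entourages to entourages, with $g\circ f$ close to $\id_X$ and $f\circ g$ close to $\id_Y$. Put $D=\{(x,g(f(x))):x\in X\}$, which lies in $\cE$ by closeness, and fix a generating set $S'\subseteq\cE'$ with $|S'|=\lambda$. I claim that
\[
S=\{(g\times g)(E'):E'\in S'\}\cup\{D\}
\]
generates $\cE$; since $|S|\le\lambda+\aleph_0=\lambda$ for infinite $\lambda$, this gives the first assertion.

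The mechanism is the subfunctoriality of $g\times g$ for the coarse operations: for $A,B\subseteq Y\times Y$ we have $(g\times g)(A\cup B)=(g\times g)(A)\cup(g\times g)(B)$, $(g\times g)(A^{-1})=((g\times g)(A))^{-1}$, and $(g\times g)(A\circ B)\subseteq(g\times g)(A)\circ(g\times g)(B)$, while $g\times g$ maps $\Delta_Y$ into $\Delta_X$. Consequently, any entourage contained in a finite combination of elements of $S'$ under $\cup$, $\circ$, ${}^{-1}$ and the diagonal is carried by $g\times g$ into the coarse structure generated by $\{(g\times g)(E'):E'\in S'\}$. Now take $E\in\cE$. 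Then $(f\times f)(E)\in\cE'$, so $(f\times f)(E)\subseteq F'$ for some $F'$ generated by $S'$, and using $D$ to pass from $x$ to $g(f(x))$ in each coordinate one checks directly that
\[
E\subseteq D\circ(g\times g)(F')\circ D^{-1}.
\]
Since $(g\times g)(F')$ is generated by $\{(g\times g)(E'):E'\in S'\}$, the entourage $E$ lies in the coarse structure generated by $S$, proving the claim.

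Finally, the ``in particular'' clause is the case $\lambda=\aleph_0$, using the standard fact that a uniformly locally finite coarse space is metrizable exactly when its coarse structure is countably generated. The only step needing care is the closeness bookkeeping in $E\subseteq D\circ(g\times g)(F')\circ D^{-1}$; everything else is formal once the subfunctoriality of $g\times g$ is recorded, and the substance of the statement resides entirely in Theorem \ref{TheoremMainIsomor2}.
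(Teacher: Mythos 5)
Your proof is correct and takes essentially the same route as the paper: both deduce a coarse equivalence from Theorem~\ref{TheoremMainIsomor2} and then transfer the generating set along it, which is exactly the content of Lemma~\ref{LemmaSize}. The only difference is that the paper cites Lemma~\ref{LemmaSize} with its proof omitted as straightforward, whereas you supply that proof explicitly (and correctly) via the subfunctoriality of $g\times g$ and the conjugation $E\subseteq D\circ(g\times g)(F')\circ D^{-1}$.
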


Recall that two \cstar-algebras $A$ and $B$ are \emph{stably isomorphic} if $A\otimes \cK(H)$ and $B\otimes \cK(H)$ are isomorphic, where $\cK(H)$ is the ideal of compact operators on the separable, infinite-dimensional Hilbert space $H$.

\begin{theorem}\label{TheoremMainIsomorMorita}
Suppose that $(X,\mathcal E)$ and $(Y,\mathcal E')$ are uniformly locally finite coarse spaces and assume that for all $n\in\N$ all ghost projections in both $\cstu(X,\cE)\otimes M_n(\C)$ and $\cstu(Y,\cE')\otimes M_n(\C)$ are compact. The following are equivalent.
\begin{enumerate}
\item\label{1.MainMorita} The coarse spaces $(X,\cE)$ and $(Y,\cE') $ are coarsely equivalent.
\item\label{1.1MainMorita} The \cstar-algebras $\cstu(X,\cE)$ and $\cstu(Y,\cE')$ are Morita equivalent.
\item \label{2.MainMorita} The \cstar-algebras $\cstu(X,\cE)$ and $\cstu(Y,\cE')$ are stably isomorphic. 
\end{enumerate}
\end{theorem}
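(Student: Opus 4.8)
The plan is to treat the three conditions separately, with the bulk of the work going into \eqref{2.MainMorita} $\Rightarrow$ \eqref{1.MainMorita}. First, \eqref{1.1MainMorita} $\Leftrightarrow$ \eqref{2.MainMorita} is pure $C^*$-algebra theory: every uniform Roe algebra is unital, hence $\sigma$-unital, so the Brown--Green--Rieffel theorem gives that $\cstu(X,\cE)$ and $\cstu(Y,\cE')$ are Morita equivalent if and only if they are stably isomorphic. This uses neither the geometry nor the ghost hypothesis. For \eqref{1.MainMorita} $\Rightarrow$ \eqref{2.MainMorita} I would invoke functoriality as recalled in the introduction: a coarse equivalence between u.l.f.\ spaces has uniformly bounded fibres and cobounded image (bounded multiplicity being forced by u.l.f.-ness together with the existence of a coarse inverse), so after blowing up by a fixed finite multiplicity $N$ it becomes a bijective coarse equivalence between $X\times\{1,\dots,N\}$ and $Y\times\{1,\dots,N\}$. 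By \cite[Theorem 8.1]{BragaFarah2018} this lifts to a $^*$-isomorphism $\cstu(X,\cE)\otimes M_N(\C)\cong\cstu(Y,\cE')\otimes M_N(\C)$, whence stable isomorphism after tensoring with $\cK(H)$; again no ghost hypothesis is needed here.

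The heart of the matter is \eqref{2.MainMorita} $\Rightarrow$ \eqref{1.MainMorita}, and here the amplified compact-ghost hypothesis enters. By the above I may assume a stable isomorphism $\Phi\colon\cstu(X,\cE)\otimes\cK(H)\to\cstu(Y,\cE')\otimes\cK(H)$. The key device is a corner/compression argument turning this into an embedding of $\cstu(X,\cE)$ as a \emph{full} hereditary subalgebra of a genuine uniform Roe algebra of a space coarsely equivalent to $Y$. Writing $B=\cstu(Y,\cE')$ and letting $q=1\otimes e_{11}$ be the canonical full projection in $\cstu(X,\cE)\otimes\cK(H)$, its image $p=\Phi(q)$ is a full projection in $B\otimes\cK(H)$ with $p(B\otimes\cK(H))p\cong\cstu(X,\cE)$. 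Since $\bigcup_n B\otimes M_n(\C)$ is dense, a functional-calculus perturbation replaces $p$ by a genuine projection $p'\in B\otimes M_n(\C)$, Murray--von Neumann equivalent to $p$; because $B\otimes M_n(\C)$ is a corner (hence hereditary) in $B\otimes\cK(H)$ and $p'\le 1\otimes E_n$, one gets
\[
\cstu(X,\cE)\;\cong\;p(B\otimes\cK(H))p\;\cong\;p'(B\otimes M_n(\C))p'\;=\;p'\,\cstu(Y_n,\cE'_n)\,p',
\]
where $Y_n=Y\times\{1,\dots,n\}$ carries the product coarse structure, so that $\cstu(Y_n,\cE'_n)\cong B\otimes M_n(\C)$. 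Fullness of $p$ in $B\otimes\cK(H)$ descends to fullness of $p'$ in $B\otimes M_n(\C)$ (the ideal of $B$ generated by the matrix entries of $p'$ is unchanged), so $\cstu(X,\cE)$ is a full hereditary subalgebra of $\cstu(Y_n,\cE'_n)$.

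Now the geometry takes over. The space $Y_n$ is u.l.f.\ and coarsely equivalent to $Y$, and $\cstu(Y_n,\cE'_n)\cong\cstu(Y,\cE')\otimes M_n(\C)$ has only compact ghost projections by hypothesis; likewise $\cstu(X,\cE)$ has only compact ghost projections (the $n=1$ case). Hence the paper's embedding-rigidity characterization (the positive answer to Problem \ref{ProbEmb}) applies and yields a coarse embedding $X\hookrightarrow Y_n$, with fullness of the hereditary subalgebra forcing its image to be cobounded. A cobounded coarse embedding is a coarse equivalence, so $X$ is coarsely equivalent to $Y_n$, and therefore to $Y$, giving \eqref{1.MainMorita}.

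The step I expect to be the main obstacle is the invocation of the embedding-rigidity characterization together with the dictionary ``full hereditary subalgebra $\leftrightarrow$ cobounded coarse embedding'': this is exactly where the rigidity machinery, fed with the amplified compact-ghost hypothesis at the finite level $\cstu(Y,\cE')\otimes M_n(\C)$, must be deployed to guarantee that the abstract embedding produced by the corner argument is implemented by an honest coarse map rather than by some exotic ghost-supported partial isometry. The preliminary $C^*$-algebraic steps (Brown--Green--Rieffel, the corner compression, and the perturbation to a finite matrix level) are routine; verifying that the finite amplifications $Y_n$ inherit the compact-ghost condition and that fullness translates into coboundedness is where the genuine care lies.
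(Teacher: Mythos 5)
There are two genuine gaps. The first is in your argument for \eqref{1.MainMorita} $\Rightarrow$ \eqref{2.MainMorita}: it is simply false that a coarse equivalence between u.l.f.\ spaces becomes a \emph{bijective} coarse equivalence between $X\times\{1,\dots,N\}$ and $Y\times\{1,\dots,N\}$ after blowing up by a finite multiplicity, even if one allows different multiplicities $N$ and $M$ on the two sides. For a counterexample, let $X=\bigsqcup_n I_n$ and $Y=\bigsqcup_n J_n$, where $I_n$ and $J_n$ are discrete intervals in $\Z$ with $|I_n|=2^n$ and $|J_n|=2^n+1$, the $n$-th blocks of $X$ and $Y$ starting at the same point and the gaps between consecutive blocks tending to infinity. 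Matching $n$-th blocks gives a coarse equivalence. But if $\phi\colon X\times\{1,\dots,N\}\to Y\times\{1,\dots,M\}$ were a bijective coarse equivalence (the coarse structures on the blown-up spaces being pulled back from $X$ and $Y$, as in the paper's proof), then, since $\phi$ and $\phi^{-1}$ send pairs at base distance $\le 1$ to pairs at base distance $\le s$ and all but finitely many gaps exceed $s$, the map $\phi$ would have to send all but finitely many blocks $I_n\times\{1,\dots,N\}$ \emph{onto} single blocks $J_{\sigma(n)}\times\{1,\dots,M\}$; this gives $N2^n=M\bigl(2^{\sigma(n)}+1\bigr)$ with $\sigma$ injective on a cofinite set, and comparing odd parts forces $2^{\sigma(n)}+1$ to be constant, a contradiction. (The algebraic shadow of this phenomenon: $M_2(\C)\oplus M_3(\C)$ and $M_2(\C)\oplus M_2(\C)$ are stably isomorphic, yet no matrix amplifications of them are isomorphic.) This is exactly why the paper does not attempt to produce a bijection and instead quotes \cite[Theorem 4]{BrodzkiNibloWright2007}, which builds the Morita equivalence directly from a non-bijective coarse equivalence, and then applies Brown--Green--Rieffel, as you do for \eqref{1.1MainMorita} $\Leftrightarrow$ \eqref{2.MainMorita}. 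So this direction is repaired by citation, but not by your argument.

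The second, and more central, gap is in \eqref{2.MainMorita} $\Rightarrow$ \eqref{1.MainMorita}. Your compression of the stable isomorphism to an embedding of $\cstu(X,\cE)$ onto a full hereditary subalgebra of $\cstu(Y)\otimes M_n(\C)\cong\cstu(Y_n)$, and the application of the embedding machinery to get a coarse embedding $f\colon X\to Y_n$ with $f(x)\in Y_{x,\delta}$, coincide with the first half of the paper's proof. But the decisive step, that $f(X)$ is cobounded, is precisely the one you do not prove: ``fullness forces coboundedness'' is asserted, and you yourself flag it as the place where the work lies, so as written the proposal is incomplete at its crux. The paper closes this step by a different device: a second perturbation in the reverse direction yields an embedding of $\cstu(Y_n)$ onto a hereditary subalgebra of $\cstu(X)\otimes M_k(\C)\cong\cstu(X\times\{1,\dots,k\})$, hence a coarse embedding $g\colon Y_n\to X\times\{1,\dots,k\}$, and the norm estimate \eqref{Eq157} shows that $\pi\circ g\circ f$ is close to $\mathrm{Id}_X$; it is $\pi\circ g$ (not $f$) that is thereby shown to be cobounded, hence a coarse equivalence. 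This reverse leg is also why the theorem assumes the amplified ghost hypothesis on $X$ and not only on $Y$. For what it is worth, your fullness route is viable and can be completed from the paper's own lemmas: fullness of $\Phi(1)$ in $\cstu(Y_n)$ gives finitely many bounded-propagation elements $a_i,b_i$ with $\bigl\|\sum_i a_i\Phi(1)b_i-1\bigr\|<1/2$, so by uniform local finiteness the set $S_\eta=\{z\in Y_n\mid \|e_{zz}\Phi(1)\|\ge\eta\}$ is cobounded for a suitable $\eta>0$; for $z\in S_\eta$ one has $\|\Psi(e_{zz})\|\ge\eta^2$, so Lemma~\ref{lemma:injections2} and Lemma~\ref{L.XyYx} produce $x\in X$ with $z\in Y_{x,\delta'}$, and Lemma~\ref{lemma:entourages} then places every point of $Y_n$ in a fixed entourage of $f(X)$. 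Had you carried this out, you would in fact have a genuinely different (and slightly stronger) proof, needing the amplified hypothesis only on the $Y$ side; without it, the key claim of the theorem remains unproved.
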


The implication \eqref{1.MainMorita} $\Rightarrow$ \eqref{1.1MainMorita} above is \cite[Theorem 4]{BrodzkiNibloWright2007}, and stable isomorphism and Morita equivalence agree for unital \cstar-algebras (this is both a special case of \cite[Theorem 1.2]{BrownGreenRieffel1977} and  our excuse for not defining Morita equivalence), so our contribution is the implication \eqref{2.MainMorita} $\Rightarrow$ \eqref{1.MainMorita}.


We then turn to the study of coarse embeddings and of Problem~\ref{ProbEmb}. The following `strong' and `weak' rigidity results generalize those proved in~\cite{BragaFarahVignati2019} for metric spaces. 

\begin{theorem}\label{TheoremMainEmbed}
Let $(X,\mathcal E)$ and $(Y,\mathcal E')$ be uniformly locally finite coarse spaces, and suppose that $(Y,\mathcal E')$ has property A. The following are equivalent. 
\begin{enumerate}
\item\label{ItemTheoremMainEmbed1} The coarse space $(X,\cE)$ injectively coarsely embeds into $(Y,\cE')$.
\item\label{ItemTheoremMainEmbed2} The \cstar-algebra $\cstu(X,\cE)$ is isomorphic to a hereditary \cstar-subalgebra of $\cstu(Y,\cE')$. 
\end{enumerate}
\end{theorem}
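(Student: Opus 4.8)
The implication \eqref{ItemTheoremMainEmbed1} $\Rightarrow$ \eqref{ItemTheoremMainEmbed2} is the easy, functorial direction already announced in the introduction. Given an injective coarse embedding $f\colon X\to Y$, let $V\colon\ell_2(X)\to\ell_2(Y)$ be the isometry $V\delta_x=\delta_{f(x)}$. Conjugation $\Ad(V)\colon a\mapsto VaV^*$ sends a finite-propagation operator on $\ell_2(X)$ supported in an entourage $E\in\cE$ to one supported in $(f\times f)(E)$, which is an entourage of $\cE'$ because $f$ is a coarse map; uniform local finiteness of the image follows from injectivity of $f$ and uniform local finiteness of $Y$. Thus $\Ad(V)$ restricts to an injective $^*$-homomorphism $\cstu(X,\cE)\to\cstu(Y,\cE')$ whose image is the corner $q\cstu(Y,\cE')q$ cut out by $q=VV^*=\chi_{f(X)}\in\ell_\infty(Y)$, and corners are hereditary. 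This uses neither property A nor any rigidity input.

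For the substantial direction \eqref{ItemTheoremMainEmbed2} $\Rightarrow$ \eqref{ItemTheoremMainEmbed1} I would first normalise the algebraic picture. Since $\cstu(X,\cE)$ is unital and $\Phi\colon\cstu(X,\cE)\to\cstu(Y,\cE')$ is a $^*$-isomorphism onto a hereditary subalgebra, $p:=\Phi(1)$ is a projection in $\cstu(Y,\cE')$; a unital hereditary subalgebra with unit $p$ is exactly the corner $p\cstu(Y,\cE')p$, so we may assume $\Phi\colon\cstu(X,\cE)\xrightarrow{\ \cong\ } p\cstu(Y,\cE')p$. Writing $e_x\in\ell_\infty(X)\subseteq\cstu(X,\cE)$ for the rank-one projection onto $\C\delta_x$, the family $(\Phi(e_x))_{x\in X}$ consists of pairwise orthogonal projections in the corner $p\cstu(Y,\cE')p$. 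The plan is to read off an injective coarse embedding $f\colon X\to Y$ from the spatial locations of these projections.

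The engine is property A of $Y$. As in \cite{SpakulaWillett2013AdvMath,BragaFarahVignati2019}, property A enters through the operator norm localisation/metric sparsification property, which here I would deploy in its coarse-space form to show that $\Phi$ is \emph{rigid}: up to arbitrarily small error it is spatially implemented. Concretely, one shows that each $\Phi(e_x)$ is, up to $\eps$, supported in a single $E$-ball of $Y$ for a fixed entourage $E=E(\eps)\in\cE'$ independent of $x$, so that a choice of point $f(x)\in Y$ where $\Phi(e_x)$ carries substantial mass is well defined up to bounded ambiguity. The propagation control transported through $\Phi$ forces $f$ to be a coarse map, while orthogonality of the $\Phi(e_x)$ together with uniform local finiteness of $Y$ keeps distinct points coarsely apart; hence $f$ is a coarse embedding. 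Replacing $f$ by an injective modification absorbs the bounded multiplicity, exactly as injectivity of the bijective coarse equivalence is extracted in the proof of Theorem~\ref{TheoremMainIsomor}, yielding the desired injective coarse embedding.

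The main obstacle is the localisation step in the absence of a metric. For metric spaces one measures the spread of $\Phi(e_x)$ by diameters and invokes operator norm localisation directly; in a general u.l.f. coarse space diameters are meaningless and ``small spread'' must be phrased in terms of a single controlling entourage that works uniformly in $x$. Producing such a uniform entourage, and verifying that the mass-concentration point $f(x)$ depends coarsely on $x$, is where the general-coarse reformulation of property A and the approximation machinery developed earlier in the paper — the same machinery underlying the implication \eqref{2.Main} $\Rightarrow$ \eqref{1.Main} of Theorem~\ref{TheoremMainIsomor} — must be pushed through. Once rigidity of $\Phi$ is available in this form, extracting the coarse embedding and arranging injectivity are comparatively routine and parallel the isomorphism case.
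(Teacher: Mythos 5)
Your first direction and the broad plan for the converse (spatial implementation, concentration of the $\Phi(e_{xx})$ on uniformly finite sets $Y_{x,\delta}$, then arguments in the style of Lemma~\ref{lemma:coarse} for coarseness and expansion) are consistent with the paper. The genuine gap is your final step: you propose to first choose $f(x)$ to be any point where $\Phi(e_{xx})$ carries mass, verify that $f$ is a coarse embedding of bounded multiplicity, and then ``replace $f$ by an injective modification.'' No such modification exists in general: the existence of a bounded-multiplicity coarse embedding does not imply the existence of any injective coarse embedding. Take $Y=\{4^n\mid n\in\N\}$ with the metric inherited from $\N$ and $X=Y\times\{0,1\}$ with $d((s,i),(t,j))=|s-t|+|i-j|$; the projection $X\to Y$ is a $2$-to-$1$ coarse embedding, but if $g\colon X\to Y$ were injective and coarse, the distinct points $g(4^n,0)\neq g(4^n,1)$ would lie within a fixed distance $R$ of each other for every $n$, while only a bounded initial segment of $Y$ admits a distinct point of $Y$ within distance $R$ --- a contradiction with injectivity. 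This failure is exactly the difference between Theorem~\ref{TheoremMainEmbed} (injective embedding into $Y$) and Theorem~\ref{thm:embednoghostproj} (coarse embedding only, hereditary subalgebra of the stabilization): in general the multiplicity can only be absorbed by passing to $Y\times\{1,\dots,n\}$, not inside $Y$ itself. Your appeal to Theorem~\ref{TheoremMainIsomor} is also inaccurate: in that proof injectivity is never obtained by modifying a non-injective map; the maps $f$ and $g$ are injective from the outset, and Cantor--Bernstein only merges two injections into a bijection.

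The information that rescues injectivity is precisely what you discard when you pass from $\Phi$ to $f$: for finite $A\subseteq X$ the projection $\Phi(\chi_A)$ has rank $|A|$, so once $\norm{\Phi(\chi_A)(1-\chi_{Y_{A,\delta}})}<1$ one gets the counting inequality $|A|\leq|Y_{A,\delta}|$ (Lemma~\ref{lemma:injective}\eqref{lemma:injective.3}; note this lemma needs property A of \emph{both} spaces --- for $X$ it follows from nuclearity passing to hereditary subalgebras together with Theorem~\ref{ThmNuclearPropA}, a step your sketch omits). This inequality is Hall's condition, so the infinite Hall Marriage Theorem produces an \emph{injective} selector $f$ with $f(x)\in Y_{x,\delta}$ before any coarse geometry is discussed, and only afterwards does Lemma~\ref{lemma:coarse} show that this $f$ is coarse and expanding. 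A smaller inaccuracy: spatial implementation of $\Phi$ is exact and automatic (Lemma~\ref{LemmaPhiStronglyContAndU}, no geometric hypotheses); property A is not what makes $\Phi$ spatial, it is what makes the concentration estimates on the sets $Y_{x,\delta}$ uniform.
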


\begin{theorem}\label{thm:embednoghostproj}
Let $(X,\mathcal E)$ and $(Y,\mathcal E')$ be uniformly locally finite coarse spaces such that $\cstu(X,\cE)$ and $\cstu(Y,\cE')$ have only compact ghost projections. The following are equivalent. 
\begin{enumerate}
\item\label{Item1:thm:embednoghostproj} The coarse space $(X,\cE)$ coarsely embeds into $(Y,\cE')$.
\item\label{Item2:thm:embednoghostproj} The \cstar-algebra  $\cstu(X,\cE)$ is isomorphic to a hereditary \cstar-subalgebra of $\cstu(Y,\cE')\otimes \cK(H)$. 
\end{enumerate}
\end{theorem}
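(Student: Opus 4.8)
The plan is to prove the two implications of Theorem~\ref{thm:embednoghostproj} separately, relying on the techniques developed for the earlier theorems in the paper. The direction \eqref{Item1:thm:embednoghostproj}~$\Rightarrow$~\eqref{Item2:thm:embednoghostproj} should be the ``soft'' functoriality direction, requiring no hypothesis on ghost projections. Indeed, as recalled in the introduction, a coarse embedding of u.l.f.\ spaces lifts to an embedding into a hereditary \cstar-subalgebra of the \emph{stabilization}. Concretely, if $f\colon (X,\cE)\to(Y,\cE')$ is a coarse embedding, I would build an isometry (or a suitable family of partial isometries indexed by the fibers of $f$) $V\colon \ell_2(X)\to \ell_2(Y)\otimes H$ implementing $f$, check that conjugation by $V$ carries $\cstu(X,\cE)$ into $\cstu(Y,\cE')\otimes\cK(H)$ using that $f$ distorts entourages in a controlled way, and verify that the image lands in the hereditary subalgebra cut out by the range projection $VV^*$. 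This is essentially the construction behind the functoriality statement in the introduction, so I would cite it and only check the hereditariness bookkeeping.

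The substantial direction is \eqref{Item2:thm:embednoghostproj}~$\Rightarrow$~\eqref{Item1:thm:embednoghostproj}, and here the strategy is to reduce to the isomorphism rigidity machinery of Theorem~\ref{TheoremMainIsomor2}. Suppose $\Phi\colon \cstu(X,\cE)\to A$ is an isomorphism onto a hereditary \cstar-subalgebra $A\subseteq \cstu(Y,\cE')\otimes\cK(H)$. The first step is to recognize $\cstu(Y,\cE')\otimes\cK(H)$ as the uniform Roe algebra of an auxiliary u.l.f.\ coarse space: namely, the space $Y\times\N$ equipped with the product coarse structure whose entourages are (within finite distance of) those from $\cE'$ in the first coordinate and the full/diagonal structure on $\N$. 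One checks that $\cstu(Y\times\N)\cong \cstu(Y,\cE')\otimes\cK(H)$ and that the compact-ghost-projection hypothesis is inherited by this product space precisely because the stated assumption controls all matrix amplifications (this is why the theorem is phrased with $\cK(H)$ rather than a single $M_n$). Having done this, I would apply the analysis underlying Theorem~\ref{TheoremMainIsomor2} to the hereditary embedding: the point is that an isomorphism onto a hereditary subalgebra still induces a well-defined, approximately-multiplicative map at the level of the ``commutator $\to 0$'' / ghost structure, which the no-ghost hypothesis upgrades to a genuine coarse map $X\to Y\times\N$. Composing with the coordinate projection $Y\times\N\to Y$ (which is a coarse equivalence onto its image) yields the desired coarse embedding $X\hookrightarrow Y$.

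The technical heart, and the step I expect to be the main obstacle, is extracting the coarse map from the hereditary embedding without the luxury of surjectivity. In the isomorphism case one has a two-sided inverse and can symmetrize arguments, but for a hereditary embedding one must work with the support projection $p=\Phi(1)\in \cstu(Y\times\N)$ (or an approximation thereof, since $\cstu(X,\cE)$ is unital but $A$ need not contain the unit of the ambient algebra) and show that $p$ is, up to the compact/ghost ideal, a ``diagonal-like'' projection supported on a subset of $Y\times\N$ that is coarsely in bijection with $X$. This requires the ghost-rigidity input: one shows that entries of $\Phi(\chi_{\{x\}})$ far from a definable ``support location'' form a ghost projection, hence are compact, hence negligible, thereby pinning down an honest point-map $X\to Y\times\N$ up to finite ambiguity. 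The uniform local finiteness of both spaces and the compact-ghost hypothesis are exactly what make this support-extraction succeed; once the point-map is in hand, checking that it is a coarse embedding (proper and bornologous) is routine from the fact that $\Phi$ carries the filtration of $\cstu(X,\cE)$ by propagation into the corresponding filtration of $\cstu(Y\times\N)$.
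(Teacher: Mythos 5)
Your first direction is essentially the paper's argument: building an isometry $V\colon\ell_2(X)\to\ell_2(Y)\otimes H$ out of the fibers of $f$ is the spatial formulation of what the paper does (it converts $f$ into an \emph{injective} coarse embedding $x\mapsto(f(x),\alpha(x))\in Y\times\{1,\dots,n\}$ and then applies the easy direction of Theorem~\ref{TheoremMainEmbed}). The one point you should make explicit is that expansiveness of $f$ together with uniform local finiteness of $X$ gives a \emph{uniform} bound $n$ on the fiber sizes; this is what forces the image of $\Ad V$ into $\cstu(Y)\otimes M_n(\C)\subseteq\cstu(Y)\otimes\cK(H)$, and with unbounded fibers already $VV^*$ would fail to lie in the stabilization.

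The second direction, however, has a genuine gap at its very first step. There is no u.l.f.\ coarse structure on $Y\times\N$ (or on any set) whose uniform Roe algebra is $\cstu(Y,\cE')\otimes\cK(H)$: every coarse structure contains the diagonal, so every uniform Roe algebra is unital, whereas the stabilization is not. (With your proposed product structure you face a dichotomy: allowing arbitrary displacement in the $\N$-direction destroys uniform local finiteness, while any genuine coarse structure on $Y\times\N$ yields an algebra containing the unit $1_Y\otimes 1$, hence strictly larger than $\cstu(Y)\otimes\cK(H)$.) Since all of the rigidity machinery (Assumption~\ref{Assumption}, Lemmas~\ref{lemma:entourages}, \ref{lemma:coarse}, \ref{lemma:injections2}) is formulated for maps into the uniform Roe algebra of a u.l.f.\ space, your reduction cannot start as stated. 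The missing idea --- and what the paper does, following the proof of Theorem~\ref{TheoremMainIsomorMorita} --- is a compression step: $p=\Phi(1_X)$ is a projection in $\cstu(Y)\otimes\cK(H)$, so it is norm-approximated by elements of $\cstu(Y)\otimes M_n(\C)$ for some finite $n$; functional calculus yields a projection $p'\in\cstu(Y)\otimes M_n(\C)$ with $\|p-p'\|<1$, and conjugating $\Phi$ by a unitary implementing the equivalence of $p$ and $p'$ (e.g., \cite[Lemma 1.5.7]{Fa:STCstar}) replaces $\Phi$ by an isomorphism onto a hereditary subalgebra of $\cstu(Y)\otimes M_n(\C)\cong\cstu(Y\times\{1,\dots,n\})$, which \emph{is} the uniform Roe algebra of a u.l.f.\ space. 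From there, Lemmas~\ref{lemma:injections2} and~\ref{lemma:coarse} (not a re-derivation of the ghost-localization argument, which the paper has already packaged in those lemmas) give a coarse embedding $X\to Y\times\{1,\dots,n\}$, and composing with the coordinate projection, which is a coarse equivalence, lands in $Y$ as you intended. A secondary issue: your parenthetical claim that the stated hypothesis ``controls all matrix amplifications'' is unjustified --- compactness of ghost projections in $\cstu(Y)$ is not known to pass to $\cstu(Y)\otimes M_n(\C)$, which is exactly why Theorem~\ref{TheoremMainIsomorMorita} assumes the amplified hypothesis explicitly; the argument just sketched genuinely uses compactness of ghost projections in $\cstu(Y)\otimes M_n(\C)$ (and in $\cstu(X)$).
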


Some of the most studied coarse structures are the ones associated to the Cayley graph of a finitely generated group. In \S\ref{S.ex.nonmetrizable} \eqref{I.Group}, given a  discrete (not necessarily finitely generated) group $\Gamma$,  we define  a canonical uniformly locally finite coarse structure $\cE_\Gamma$ on it. In case $\Gamma$ is finitely generated, the coarse structure $ \cE_\Gamma$   coincides with the coarse structure associated with the Cayley graph metric on $\Gamma$. 	Generally, $(\Gamma,\cE_\Gamma)$ retains the desirable properties present in the finitely generated case such as the following analogue of \cite[Corollary~6.3]{SpakulaWillett2013AdvMath}.\footnote{The authors of \cite{SpakulaWillett2013AdvMath} also used a result of Whyte,  that the coarse equivalence and quasi-isometry agree in the category of nonamenable groups. It is not likely  that quasi-isometry has an analog in nonmetrizable  setting.} (Recall that a group is \cstar-exact if its reduced group \cstar-algebra is exact. For discrete groups, this is equivalent to $\cstu(\Gamma)$ being nuclear, see~\cite[Theorem 5.1.6]{BrownOzawa}.)

\begin{theorem}\label{T.Group} Let $\Gamma$ and $\Lambda$ be discrete and \cstar-exact groups. The following are equivalent. 
\begin{enumerate}
\item \label{1.T.Group} $\Gamma$ and $\Lambda$ are  coarsely equivalent. 
\item \label{2.T.Group}  $\cstu(\Gamma)$ and $\cstu(\Lambda)$ are Morita equivalent. 
\item \label{3.T.Group}  The reduced crossed products $\ell_\infty(\Gamma)\rtimes_r \Gamma$ and $\ell_\infty(\Lambda)\rtimes_r \Lambda$ are Morita equivalent. 
\end{enumerate}
\end{theorem}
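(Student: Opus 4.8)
The plan is to reduce Theorem~\ref{T.Group} to Theorem~\ref{TheoremMainIsomorMorita}, applied to the canonical coarse spaces $(\Gamma,\cE_\Gamma)$ and $(\Lambda,\cE_\Lambda)$, after checking that the $\Cstar$-exactness hypothesis is exactly what is needed to verify the ghost-projection condition appearing there. First I would dispose of the equivalence \eqref{2.T.Group}$\Leftrightarrow$\eqref{3.T.Group}, which does not use exactness at all: for a discrete group with its canonical coarse structure there is a $^*$-isomorphism $\cstu(\Gamma)\cong\ell_\infty(\Gamma)\rtimes_r\Gamma$ (the reduced crossed product is generated by the diagonal copy of $\ell_\infty(\Gamma)$ together with the left translation unitaries, which have finite propagation), established for $\cE_\Gamma$ in \S\ref{S.ex.nonmetrizable}. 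Since Morita equivalence is preserved by $^*$-isomorphism, conditions \eqref{2.T.Group} and \eqref{3.T.Group} become the same statement. This leaves the genuine content, namely \eqref{1.T.Group}$\Leftrightarrow$\eqref{2.T.Group}.

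The key reduction is that, for discrete groups, $\Cstar$-exactness of $\Gamma$ is equivalent to the coarse space $(\Gamma,\cE_\Gamma)$ having property A. Indeed, by \cite[Theorem 5.1.6]{BrownOzawa} (cited above) $\Gamma$ is $\Cstar$-exact if and only if $\cstu(\Gamma)$ is nuclear, and for a uniformly locally finite coarse space nuclearity of the uniform Roe algebra is equivalent to property A. Hence the hypothesis that both $\Gamma$ and $\Lambda$ are $\Cstar$-exact guarantees that both $(\Gamma,\cE_\Gamma)$ and $(\Lambda,\cE_\Lambda)$ have property A. I would then invoke the fact that property A forces every ghost operator to be compact; to obtain the matrix-amplified statement required by Theorem~\ref{TheoremMainIsomorMorita}, note that $\cstu(\Gamma)\otimes M_n(\C)$ is $^*$-isomorphic to the uniform Roe algebra of the space formed by $n$ uniformly close copies of $(\Gamma,\cE_\Gamma)$, which is coarsely equivalent to $(\Gamma,\cE_\Gamma)$ and hence again has property A. Consequently all ghost projections in $\cstu(\Gamma)\otimes M_n(\C)$ and $\cstu(\Lambda)\otimes M_n(\C)$ are compact for every $n\in\N$, so the hypotheses of Theorem~\ref{TheoremMainIsomorMorita} are met. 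Applying that theorem, the equivalence \eqref{1.MainMorita}$\Leftrightarrow$\eqref{1.1MainMorita} gives precisely \eqref{1.T.Group}$\Leftrightarrow$\eqref{2.T.Group}, completing the cycle.

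The main obstacle I anticipate is not the final invocation of Theorem~\ref{TheoremMainIsomorMorita} but the two dictionary facts that must hold in the \emph{nonmetrizable} generality of $\cE_\Gamma$: that property A implies all ghost projections are compact (and that this passes to the finite amplifications $\cstu(\Gamma)\otimes M_n(\C)$ via coarse equivalence), and that property A of $(\Gamma,\cE_\Gamma)$ coincides with $\Cstar$-exactness of $\Gamma$. In the metric, finitely generated setting these are standard, so the real work is to confirm that the identification $\cstu(\Gamma)\cong\ell_\infty(\Gamma)\rtimes_r\Gamma$, the exactness$\Leftrightarrow$property~A equivalence, and the compactness of ghosts all transfer verbatim to the canonical coarse structure on an arbitrary discrete group; once these are in hand, the theorem follows formally from the results already assembled above.
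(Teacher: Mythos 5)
Your proposal is correct and follows essentially the same route as the paper: the equivalence of \eqref{2.T.Group} and \eqref{3.T.Group} is Proposition~\ref{P.Crossed}, and the equivalence of \eqref{1.T.Group} and \eqref{2.T.Group} is Theorem~\ref{TheoremMainIsomorMorita}, with \cstar-exactness supplying the compact-ghost hypothesis via nuclearity of $\cstu(\Gamma)$ and property~A. You even spell out a detail the paper's two-line proof leaves implicit, namely that the hypothesis of Theorem~\ref{TheoremMainIsomorMorita} concerns the amplifications $\cstu(\Gamma)\otimes M_n(\C)$, which you handle by identifying them with uniform Roe algebras of $n$ uniformly close copies of $(\Gamma,\cE_\Gamma)$.
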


\begin{proof} The equivalence of \eqref{2.T.Group} and \eqref{3.T.Group} is  Proposition~\ref{P.Crossed}\eqref{2.T.Group}; it does not use the assumption of \cstar-exactness. Since \cstar-exactness implies that all ghost projections are compact, the equivalence of \eqref{2.T.Group} and \eqref{1.T.Group} is Theorem~\ref{TheoremMainIsomorMorita}. 	
\end{proof}


The paper is structured as follows: \S\ref{SectionPrelim} introduces terminology and notation; in \S\ref{SectionPropA} we discuss property A and its equivalent reformulations applicable to general coarse spaces. In \S\ref{S.5} we introduce the main technical novelty used in the proofs of our results (see Lemma \ref{L.Claim}). Moreover, \S\ref{S.5}  deals with isomorphisms (and embeddings) between uniform Roe algebras which satisfy an extra rigidity condition (see Lemmas \ref{lemma:entourages} and \ref{lemma:coarse}), and in \S\ref{S.6} we show that property A (or just having only compact ghost projections in its uniform Roe algebra) implies that this rigidity condition holds. All our main results are proved in \S\ref{S.6}. In \S\ref{SectionAppli} they are applied to answer a question of White and Willett (see Corollary \ref{CorWhillettWhite}) about Cartan masas of uniform Roe algebras. We conclude by discussing uniform Roe algebras of  nonmetrizable coarse spaces in \S\ref{S.nonmetrizable}.

\section{Preliminaries}\label{SectionPrelim}
We record a few well-known facts about coarse spaces and their associated uniform Roe algebras.
Given a set $X$, $\cP(X)$ denotes its power set. The following should not be confused with the notion of a coarse topological space from \cite[Definition~5.0.1]{willett2019higher}, where the coarse structure is assumed to be countably generated. 

\subsection{Coarse spaces and geometry}\label{SubsectionCoarseSpace}
Given a set $X$, some $\mathcal E\subseteq\mathcal P(X\times X)$ is called a \emph{coarse structure on $X$} if 
\begin{enumerate}
\item the diagonal $\Delta_X=\{(x,x)\in X\times X\mid x\in X\}\in \cE$,
\item if $E\in \cE$, then $E^{-1}=\{(x,y)\in X\times X\mid (y,x)\in E\}\in \cE$,
\item if $E\in \cE$ and $F\subseteq E$, then $F\in \cE$, 
\item if $E,F\in \cE$, then $E\cup F\in \cE$, and
\item if $E,F\in \cE$, then $E\circ F\in\mathcal E$, where
\[
E\circ F=\{(x,z)\in X\times X\mid\exists y\in X,\ (x,y)\in E\wedge (y,z)\in F \}.
\]
\end{enumerate} 
 A coarse structure $(X,\cE)$ is \emph{connected} if it contains all finite subsets of $X\times X$ and $(X,\cE)$ is \emph{uniformly locally finite} (\emph{u.l.f.} from now on) if for all $E\in\mathcal E$ we have that 
\[
\sup_{x\in X}\{y\in X\mid (x,y)\in E\}<\infty.
\] 
Given $E\in \cE$ and $A\subseteq X$, we say that $A$ is \emph{$E$-bounded} if $A\times A\subseteq E$. 
 
Given sets $X$ and $Y$, a \emph{partial bijection between $X$ and $Y$} is a bijection $f:A\to B$ between $A\subseteq X$ and $B\subseteq Y$. For a proof of the following, see  \cite[Lemma 2.7(a)]{SkandalisTuYu2002} or \cite[Proposition 2.4]{BragaFarah2018}.

\begin{lemma}\label{lemma:splitting}
Let $(X,\mathcal E)$ be a u.l.f. coarse space. Then for every $E\in\mathcal E$ there is a finite partition $E=\bigsqcup_{i\leq n} E_i$ such that each $E_i$ is the graph of a partial bijection. \qed 
\end{lemma}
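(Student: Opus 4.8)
The plan is to view $E$ as the edge set of a bipartite graph of bounded degree and to partition its edges into finitely many matchings, using the observation that a subset of $X\times X$ is the graph of a partial bijection precisely when no two of its pairs share a first coordinate and no two share a second coordinate.

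First I would record the two degree bounds. By u.l.f.\ applied to $E$ itself, the number $N_1:=\sup_{x\in X}|\{y:(x,y)\in E\}|$ is finite. Since $\mathcal E$ is closed under inverses, $E^{-1}\in\mathcal E$, and u.l.f.\ applied to $E^{-1}$ shows that $N_2:=\sup_{y\in X}|\{x:(x,y)\in E\}|$ is finite as well; here I use that $\{x:(x,y)\in E\}=\{x:(y,x)\in E^{-1}\}$.

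The core of the argument is a two-stage refinement. In the first stage, for each $x\in X$ the fibre $E_x:=\{y:(x,y)\in E\}$ is finite of size at most $N_1$, so I may choose an injection $\iota_x\colon E_x\to\{1,\dots,N_1\}$ and set $c_1(x,y):=\iota_x(y)$. Letting $F_j:=\{(x,y)\in E:c_1(x,y)=j\}$ yields a partition $E=\bigsqcup_{j\le N_1}F_j$ in which each $F_j$ is the graph of a partial function: two distinct pairs in $F_j$ cannot share a first coordinate, by injectivity of $\iota_x$. In the second stage I repeat the construction with the roles of the two coordinates exchanged, now inside each $F_j$: for each $y$ the set $\{x:(x,y)\in F_j\}$ has size at most $N_2$, so a similar injective labelling splits $F_j$ into pieces $F_{j,l}$ (for $l\le N_2$) no two of whose pairs share a second coordinate. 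Since $F_{j,l}\subseteq F_j$ retains the partial-function property, no two distinct pairs of $F_{j,l}$ share either a first or a second coordinate, i.e.\ $F_{j,l}$ is the graph of a partial bijection. The sets $F_{j,l}$ partition $E$ into at most $N_1N_2$ pieces, which (after discarding the empty ones) is the desired finite partition.

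I do not expect a genuine obstacle: each vertex has only finitely many neighbours, so the infinite, non-metrizable case is handled exactly as the finite one, with no compactness input needed. The one point to keep in mind is that a single split controlling only the first coordinate produces partial functions rather than partial bijections; it is the second split, controlling the second coordinate, that upgrades the pieces to graphs of partial bijections. (An alternative, less elementary route is to invoke the infinite form of König's edge-colouring theorem for the bounded-degree bipartite graph determined by $E$, but the explicit two-stage labelling above is cleaner and self-contained.)
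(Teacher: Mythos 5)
Your proof is correct. There is nothing internal to compare it against: the paper does not prove this lemma, deferring instead to \cite[Lemma 2.7(a)]{SkandalisTuYu2002} and \cite[Proposition 2.4]{BragaFarah2018}. The argument those references (and the paper's own toolkit, cf.\ \S\ref{S.Graphs}) suggest is a one-step graph colouring: put a graph structure on $E$ itself by declaring two distinct pairs adjacent when they share a first or a second coordinate; the degree of this graph is at most $N_1+N_2-2$, so $E$ splits into at most $N_1+N_2-1$ colour classes, and a colour class is by definition a set of pairs no two of which share either coordinate, i.e.\ the graph of a partial bijection. This is exactly the device the paper uses later in the proof of Lemma~\ref{L.Claim}. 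Your two-stage injective labelling is a genuinely different, equally elementary route: it controls the two coordinates sequentially rather than simultaneously, at the cost of a quadratic bound $N_1N_2$ on the number of pieces instead of an additive one (the optimal bound $\max(N_1,N_2)$ comes from K\H{o}nig's edge-colouring theorem, as you note, but requires compactness in the infinite case, which both your argument and the colouring argument avoid). One detail you handle more carefully than the paper: the definition of u.l.f.\ in \S\ref{SubsectionCoarseSpace} bounds only the sections in one direction, so bounding the other direction genuinely requires closure of $\cE$ under inverses applied to $E^{-1}$, which you spell out explicitly.
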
 

If $\cF\subseteq\mathcal P(X\times X)$, we write $\langle \cF\rangle$ for the coarse structure generated by~$\cF$, that is, the smallest coarse structure on $X$ containing $\cF$. We say that $\cF$ \emph{generates}~$\langle \cF\rangle$. A coarse structure $\cE$ on $X$ is \emph{countably generated} if there exists a countable $\cF\subseteq \cP(X\times X)$ such that $\cE=\langle \cF\rangle$. 
 
If the set $X$ is equipped with a metric $d$, then $X$ has a canonical coarse structure, denoted by $\cE_d$, which is the coarse structure generated by the sets
\[
\{(x,y)\in X\times X\mid d(x,y)\leq n\}\text{, for } n\in\N.
\]
A coarse space $(X,\cE)$ is called \emph{metrizable} if there exists a metric $d$ on $X$ such that $\cE=\cE_d$. In this case, we say that the metric $d$ is \emph{compatible} with~$\cE$. 
Notably, a connected coarse structure $\mathcal E$ on $X$ is countably generated if and only if $\mathcal E=\mathcal E_d$ for some compatible metric $d$ by \cite[Theorem 2.55]{RoeBook}. More generally, we say that  $d\colon X\times X\to [0,\infty]$ is a \emph{$[0,\infty]$-valued metric} on $X$ if it is a symmetric function that separates the points of $X$ and satisfies the triangle inequality. Then \cite[Theorem 2.55]{RoeBook} gives the following.

\begin{lemma} \label{L.infty-valued-metric}
 	The following are equivalent for a coarse space $(X,\cE)$. 
 	\begin{enumerate}
 \item $\cE$ is generated by a $[0,\infty]$-valued metric.
 \item $\cE$ is countably generated. 
\item $\cE$ has a countable cofinal subset. \qed 
 	\end{enumerate}
\end{lemma}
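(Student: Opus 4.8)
The plan is to prove the cycle of implications $(1)\Rightarrow(3)\Rightarrow(2)\Rightarrow(1)$, where throughout ``cofinal'' is understood with respect to inclusion: a set $\cD\subseteq\cE$ is cofinal if every $E\in\cE$ is contained in some $D\in\cD$. Two elementary observations underlie the two easy arrows. First, if $\cD\subseteq\cE$ is cofinal then $\cE=\langle\cD\rangle$, since $\langle\cD\rangle\subseteq\cE$ is clear while conversely any $E\in\cE$ sits inside some $D\in\cD$ and hence lies in $\langle\cD\rangle$ by closure under subsets; this gives $(3)\Rightarrow(2)$ at once. Second, for a $[0,\infty]$-valued metric $d$ the balls $E_n=\{(x,y)\mid d(x,y)\le n\}$ form an increasing sequence satisfying $E_n^{-1}=E_n$, $E_n\cup E_m=E_{\max(n,m)}$, and $E_n\circ E_m\subseteq E_{n+m}$ (by symmetry and the triangle inequality), with $\Delta_X=E_0$. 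Consequently the family of all subsets of the $E_n$ is already closed under the five coarse-structure operations, so it equals $\langle\{E_n\mid n\in\N\}\rangle=\cE$ and the countable chain $\{E_n\}$ is cofinal; this proves $(1)\Rightarrow(3)$.

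The substance of the lemma is the arrow $(2)\Rightarrow(1)$. Starting from a countable generating family $\cF=\{F_i\mid i\in\N\}$, I would first record the standard normal form for $\langle\cF\rangle$: every $E\in\cE$ is a subset of a finite union of finite compositions of members of $\cF$, their inverses, and $\Delta_X$. Then, setting $G_n=\Delta_X\cup\bigcup_{i\le n}(F_i\cup F_i^{-1})$ and $C_n=G_n^{\circ n}$ (with $C_0=\Delta_X$), one obtains a sequence that is symmetric, contains the diagonal, is increasing, and, crucially, satisfies the subadditivity $C_n\circ C_m\subseteq C_{n+m}$, because $G_n,G_m\subseteq G_{n+m}$ forces $G_n^{\circ n}\circ G_m^{\circ m}\subseteq G_{n+m}^{\circ(n+m)}$. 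The same inclusions, applied to a normal-form expression for a given $E$, show that $\{C_n\}$ is cofinal: if every generator index and every composition length occurring in the expression is at most $N$, then $E\subseteq C_N$, with shorter compositions absorbed since $\Delta_X\subseteq G_N$.

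With the chain $\{C_n\}$ in hand I would define $d(x,y)=\min\{n\in\N\mid (x,y)\in C_n\}$, with $d(x,y)=\infty$ when no such $n$ exists. Symmetry of $d$ comes from $C_n^{-1}=C_n$; the fact that $d$ separates points comes from $C_0=\Delta_X$; and the triangle inequality is exactly the translation of $C_n\circ C_m\subseteq C_{n+m}$. Since the $C_n$ increase, the ball $\{(x,y)\mid d(x,y)\le n\}$ equals $C_n$, so $d$ is a $[0,\infty]$-valued metric whose associated coarse structure is $\langle\{C_n\}\rangle$, which equals $\cE$ by cofinality and the first observation. This yields $(2)\Rightarrow(1)$ and closes the cycle.

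I expect the only real obstacle to be the bookkeeping in $(2)\Rightarrow(1)$: one must choose the indexing $C_n=G_n^{\circ n}$ so that cofinality (which wants the compositions to be long enough) and the subadditive composition law $C_n\circ C_m\subseteq C_{n+m}$ (which wants the index to track composition length) hold simultaneously. This single choice is what lets the resulting $d$ satisfy the triangle inequality while still generating all of $\cE$; everything else is routine verification against the coarse-structure axioms, and no connectedness or uniform local finiteness hypothesis is needed.
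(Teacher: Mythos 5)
Your proposal is correct, but it is worth noting that the paper contains no proof of this lemma at all: the statement carries a \qed{} and is attributed to \cite[Theorem~2.55]{RoeBook}, i.e.\ Roe's metrizability criterion that a \emph{connected} coarse structure is metrizable if and only if it is countably generated, with the $[0,\infty]$-valued formulation serving precisely to drop connectedness. Your argument supplies the details behind that citation. The implications $(1)\Rightarrow(3)\Rightarrow(2)$ are the routine closure verifications you give (subsets of the balls $E_n$ already form a coarse structure, so the balls are cofinal; a cofinal family generates). For $(2)\Rightarrow(1)$, your choice $C_n=G_n^{\circ n}$ with $G_n=\Delta_X\cup\bigcup_{i\le n}(F_i\cup F_i^{-1})$ is a clean device: it produces in one stroke a countable cofinal chain that is symmetric, contains $\Delta_X$, and satisfies the subadditivity $C_n\circ C_m\subseteq C_{n+m}$, after which the first-index function $d(x,y)=\min\{n\mid (x,y)\in C_n\}$ (value $\infty$ if no such $n$ exists) is a $[0,\infty]$-valued metric whose $n$-balls are exactly the $C_n$, so it generates $\cE$ by cofinality. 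Allowing the value $\infty$ is exactly what removes the need for connectedness (points in different ``components'' simply sit at infinite distance), which is the generalization the paper needs but delegates to the reference; so your proof is self-contained and marginally more general than the source the paper leans on, and I see no gap in it.
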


Let $\lambda$ be a cardinal. We say that $\cE$ is $\lambda$-generated if $\lambda$ is the minimum size of a family $\mathcal F\subseteq\cE$ such that $\cE=\langle\cF\rangle$. Counter intuitively, if $\cE\subseteq\cE'$ and $\cE'$ is $\lambda$-generated, $\cE$ need not to be $\mu$-generated for some $\mu\leq\lambda$ (see Lemma~\ref{lemma:uf}).
 

\begin{definition}
Let $(X,\cE)$ and $(Y,\cE')$ be coarse spaces and $f,h\colon X\to Y$ be maps.
\begin{enumerate}
\item The map $f$ is \emph{coarse} if for all $E\in\cE$, 
\[
\Big\{(f(x),f(x'))\in Y\times Y\mid (x,x')\in E\Big\}\in \cE'.
\]
\item The maps $f$ and $h$ are \emph{close} if 
\[
\Big\{(f(x),h(x))\in Y\times Y\mid x\in X\Big\}\in \cE'.
\]
\item If $f$ is coarse and there is a coarse map $g\colon Y\to X$ such that $g\circ f$ is close to $\mathrm{Id}_X$ and $f\circ g$ is close to $\mathrm{Id}_Y$, we say that $f$ is a \emph{coarse equivalence}. In this case $(X,\cE)$ and $(Y,\cE')$ are said to be \emph{coarsely equivalent}. If the coarse equivalent is a bijection, the spaces are said to be  \emph{bijectively coarse equivalent}.
\item The map $f$ is a \emph{coarse embedding} if $f\colon X\to f(X)$ is a coarse equivalence.
\end{enumerate}
\end{definition}

Equivalently, coarse embeddings can be defined to be maps $f\colon (X,\cE)\to (Y,\cE')$ between coarse spaces which are coarse and \emph{expanding}, i.e., $f$ is such that, for all $E\in\cE'$, 
\[
\Big\{(x,x')\in X\times X\mid (f(x),f(x'))\in E\Big\}\in \cE.
\]
The existence of coarse embedding is monotonic with respect to the size of the smallest generating set, as witnessed by the following straightforward lemma whose proof we omit:
\begin{lemma}\label{LemmaSize}
Let $(X,\cE)$ and $(Y,\cE')$ be u.l.f. coarse spaces, and let $\lambda$ be a cardinal. Suppose that $X$ coarsely embeds into $Y$. If $\cE'$ is $\lambda$-generated, then $\cE$ is $\mu$-generated for some $\mu\leq\lambda$.\qed
\end{lemma}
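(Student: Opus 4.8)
The plan is to exploit the two defining features of a coarse embedding $f\colon X\to Y$ simultaneously: being \emph{coarse} pushes $\cE$-entourages forward into $\cE'$, while being \emph{expanding} pulls $\cE'$-entourages back into $\cE$. For $E'\subseteq Y\times Y$ write $f^{*}(E')=\{(x,x')\in X\times X\mid (f(x),f(x'))\in E'\}$ for its pullback; the expanding condition says precisely that $f^{*}(E')\in\cE$ whenever $E'\in\cE'$. The elementary observation driving everything is that for every $E\in\cE$ one has
\[
E\subseteq f^{*}\Big(\big\{(f(x),f(x'))\mid (x,x')\in E\big\}\Big),
\]
and the inner set belongs to $\cE'$ since $f$ is coarse. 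In words: the pullbacks of $\cE'$-entourages form a subfamily of $\cE$ that is \emph{cofinal} in $\cE$. I will also use the (downward-closure) fact that any cofinal subfamily $\cC\subseteq\cE$ automatically generates $\cE$: if $E\in\cE$ then $E\subseteq C$ for some $C\in\cC\subseteq\langle\cC\rangle$, and $\langle\cC\rangle$ is closed under subsets, so $E\in\langle\cC\rangle$.

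With this in hand the argument is short. Fix a generating family $\cF'\subseteq\cE'$ with $|\cF'|=\lambda$, and let $\cC'$ be the collection of all finite unions of finitely many compositions of members of $\cF'\cup\{\Delta_Y\}$ and their inverses. By the definition of $\langle\cF'\rangle$, the family $\cC'$ is cofinal in $\cE'=\langle\cF'\rangle$, and a routine count gives $|\cC'|\le\max(\lambda,\aleph_0)=\lambda$ (the relevant case being $\lambda$ infinite; metrizability corresponds to $\lambda=\aleph_0$). Now set $\cC=\{f^{*}(C')\mid C'\in\cC'\}$. Each member lies in $\cE$ by the expanding property, and $\cC$ is cofinal in $\cE$: given $E\in\cE$, the coarse set $\{(f(x),f(x'))\mid (x,x')\in E\}\in\cE'$ is contained in some $C'\in\cC'$, whence the displayed inclusion together with monotonicity of $f^{*}$ yields $E\subseteq f^{*}(C')\in\cC$. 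Being cofinal, $\cC$ generates $\cE$, and $|\cC|\le\lambda$; therefore the minimal generating size $\mu$ of $\cE$ satisfies $\mu\le\lambda$.

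The one point to be careful about — and the reason the naive approach does not quite work — is that $f^{*}$ commutes with unions and inverses but \emph{not} with composition: a middle point witnessing $(f(x),f(x'))\in A\circ B$ need not lie in the image $f(X)$, so in general $f^{*}(A\circ B)\supsetneq f^{*}(A)\circ f^{*}(B)$. Consequently one cannot simply pull back the $\lambda$ generators $\cF'$ and expect $\{f^{*}(E')\mid E'\in\cF'\}$ to generate $\cE$ through words. The fix above sidesteps this entirely by pulling back the whole cofinal family $\cC'$ of words rather than only the generators; since $|\cC'|\le\lambda$ for infinite $\lambda$, nothing is lost in cardinality. (For finite $\lambda$ the clean statement $\mu\le\lambda$ can fail, e.g. a sparse subspace of $\Z$ coarsely embeds into $\Z$ yet carries the minimal connected structure, which is $\aleph_0$-generated; the cofinality bound still gives $\mu\le\aleph_0$, which is all one needs in practice.)
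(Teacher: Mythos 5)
Your proof is correct, and there is nothing in the paper to compare it against: the authors state Lemma~\ref{LemmaSize} as a ``straightforward lemma whose proof we omit,'' so your write-up supplies the missing argument rather than duplicating one. The two points you isolate are exactly what makes the omitted proof slightly less than trivial. First, your observation that $f^{*}$ does not commute with composition (a midpoint witnessing $(f(x),f(x'))\in A\circ B$ need not lie in $f(X)$) correctly explains why one cannot simply pull back a minimal generating family $\cF'$; pulling back the cofinal family $\cC'$ of all words instead, together with the fact that any cofinal subfamily of a coarse structure generates it, is the right fix, and the count $|\cC'|\le\max(\lambda,\aleph_0)$ is correct. Second, your closing caveat is a genuine observation about the statement itself: for finite $\lambda$ the lemma is false as written. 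Indeed, the standard structure $\cE_{|\cdot|}$ on $\Z$ is $1$-generated (by $\{(x,y)\mid |x-y|\le 1\}$), and the sparse set $X=\{2^{n}\mid n\in\N\}$ with the subspace structure coarsely embeds into $\Z$; but the entourages of that subspace structure are exactly the sets of the form (subset of $\Delta_X$) $\cup$ (finite set), and no finite family can generate it, since words in finitely many generators produce off-diagonal pairs only among the finitely many points those generators touch. That $\cE$ is therefore $\aleph_0$-generated and not $\mu$-generated for any finite $\mu$, so the lemma must be read with $\lambda$ infinite --- which is all that its application (Corollary~\ref{cor:metr}, where metrizability corresponds to $\lambda=\aleph_0$) requires. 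This complements the paper's own remark that the analogous monotonicity fails for reducts: your example shows it also fails along coarse embeddings once $\lambda$ is allowed to be finite.
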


\subsection{Uniform Roe algebras}\label{SubsectionURA}
Given a Hilbert space $H$, we denote the \cstar-algebra of bounded operators on $H$ by $\cB(H)$ and its ideal of compact operators by $\cK(H)$. If $X$ is a set, $\ell_2(X)$ denotes the Hilbert space of complex-valued $X$-indexed families with a distinguished orthonormal basis  $(\delta_x)_{x\in X}$. 
If $x,x'\in X$, the operator $e_{xx'}$ denotes the unique rank $1$ partial isometry sending $\delta_{x}$ to $\delta_{x'}$. For all operators $a\in\mathcal B(\ell_2(X))$ we have that
\[
\norm{e_{yy}ae_{xx}}=|\langle a\delta_x,\delta_y\rangle|.
\]
Let $(X,\cE)$ be a coarse space. Given an operator $a\in \cB(\ell_2(X))$, the support of $a$ is defined as
\[
\supp(a)=\{(x,y)\in X\times X\mid \langle a\delta_x, \delta_y\rangle\neq 0\}.
\]
We say that $a\in \cB(\ell_2(X))$ has \emph{$\cE$-bounded propagation} if $\supp(a)\in \cE$. The subset of all $\cE$-bounded operators forms a $^*$-algebra, whose operator norm closure is called the \emph{uniform Roe algebra of $(X,\cE)$}, denoted by $\cstu(X,\cE)$. When $\mathcal E$ is clear from the context, we simply write $\cstu(X)$ for $\cstu(X,\cE)$.
 
If $(X,\mathcal E)$ is a u.l.f. coarse space and $E\in\mathcal E$, we define an operator $v_E$ on~$\ell_2(X)$~by 
\[
\langle v_E\delta_{x},\delta_{x'}\rangle=\begin{cases}1&\text{ if }(x,x')\in E\\
0&\text{ otherwise.}
\end{cases}
\]
Notice that 
\[
v_E=\sum_{(x,x')\in E}e_{xx'}.
\]
Since $X$ is u.l.f.,  $v_E$ is bounded and it belongs to $\cstu(X)$. We call $v_E$ the \emph{partial translation associated to $E$}. We isolate the following for future reference. 

\begin{lemma}\label{lemma:struct}
Let $(X,\mathcal E)$ be a u.l.f. coarse space. Then 
\[
\mathcal E=\bigcup_{a\in\cstu(X,\mathcal E), \eps>0}\{(x,y)\in X\times X\mid \norm{e_{yy}ae_{xx}}>\eps\}.
\]
\end{lemma}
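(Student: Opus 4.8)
The plan is to read the displayed identity as the assertion that the entourages of $\mathcal E$ are exactly the sets $S_{a,\eps}:=\{(x,y)\in X\times X\mid \norm{e_{yy}ae_{xx}}>\eps\}$, and to prove the two resulting inclusions. The only inputs I would use are the given formula $\norm{e_{yy}ce_{xx}}=|\langle c\delta_x,\delta_y\rangle|$, the fact that each partial translation $v_E$ lies in $\cstu(X,\mathcal E)$, and that $\cstu(X,\mathcal E)$ is by definition the norm closure of the finite-propagation operators.

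For the inclusion of $\mathcal E$ into the family $\{S_{a,\eps}\}$, I would exhibit every entourage as a set of this form. Given $E\in\mathcal E$, I take $a=v_E$ and $\eps=1/2$. Since $\langle v_E\delta_x,\delta_{x'}\rangle\in\{0,1\}$, equalling $1$ precisely when $(x,x')\in E$, the identity $\norm{e_{x'x'}v_E e_{xx}}=|\langle v_E\delta_x,\delta_{x'}\rangle|$ shows that the matrix coefficient exceeds $1/2$ exactly on $E$, so $S_{v_E,1/2}=E$. As $v_E\in\cstu(X,\mathcal E)$, this places $E$ on the right-hand side.

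For the reverse inclusion I would show each $S_{a,\eps}$ is itself an entourage. Fix $a\in\cstu(X,\mathcal E)$ and $\eps>0$, and using density choose a finite-propagation operator $b$ with $\supp(b)\in\mathcal E$ and $\norm{a-b}<\eps$. For any pair $(x,y)$ the given formula yields $|\langle a\delta_x,\delta_y\rangle-\langle b\delta_x,\delta_y\rangle|=\norm{e_{yy}(a-b)e_{xx}}\le\norm{a-b}<\eps$, so $(x,y)\in S_{a,\eps}$ forces $\langle b\delta_x,\delta_y\rangle\neq 0$, i.e. $(x,y)\in\supp(b)$. Hence $S_{a,\eps}\subseteq\supp(b)$, and since $\mathcal E$ is closed under passing to subsets, $S_{a,\eps}\in\mathcal E$.

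The argument is essentially routine, and I would flag only one point as needing (mild) care: the approximation step, where one must convert a bound on the operator norm of $a-b$ into control of the \emph{individual} matrix coefficients of $a$. This is immediate from $\norm{e_{yy}ce_{xx}}=|\langle c\delta_x,\delta_y\rangle|\le\norm{c}$, so no genuine obstacle arises. I would not expect the u.l.f.\ hypothesis to enter beyond ensuring that the operators $v_E$ and the finite-propagation approximants actually belong to $\cstu(X,\mathcal E)$.
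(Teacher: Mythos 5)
Your proof is correct and follows essentially the same route as the paper's: the direct inclusion is witnessed by the partial translations $v_E$, and the reverse inclusion uses an $\mathcal E$-bounded-propagation approximant $b$ with $\norm{a-b}<\eps$ together with the fact that $\mathcal E$ is closed under subsets. The only cosmetic difference is that you pin down $S_{v_E,1/2}=E$ exactly, while the paper phrases the converse for arbitrary subsets of the sets $S_{a,\eps}$; these are equivalent since $\mathcal E$ is closed under passing to subsets.
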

\begin{proof} 
The collection $\{v_E\in \cstu(X) \mid E\in\mathcal E\}$ witnesses the direct inclusion. For the converse inclusion, suppose that $E\subseteq X\times X$ is such that there are $a\in\cstu(X)$ and $\eps>0$ with $\norm{e_{x'x'}ae_{xx}}>\eps$	for all $(x,x')\in E$. Fix $b\in \cstu(X)$ of $\mathcal E$-bounded propagation such that $\|a-b\|<\eps$. Then $E$ is contained in the support of $b$. Since $\mathcal E$ is closed by subsets, we are done.
\end{proof}
If $\mathcal E\subseteq \mathcal E'$ are coarse structures on $X$, we say that $\mathcal E$ is a \emph{reduct} of $\mathcal E'$. We leave the proof of the following to the reader.

\begin{proposition}\label{PropInclusionCoarseStructure}
Let $Y\subseteq X$ be sets, and let $\mathcal E$ and $\mathcal E'$ be u.l.f. coarse structures on $Y$ and $X$, respectively. Then $\mathcal E\subseteq \mathcal E'$ if and only if the identity map induces an inclusion $\cstu(Y,\mathcal E)\subseteq\cstu(X,\mathcal E')$.
\end{proposition}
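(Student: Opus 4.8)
The plan is to first fix the identification under which the statement is even meaningful, and then treat each implication separately. Since $Y\subseteq X$, the inclusion of the distinguished orthonormal bases gives an isometric inclusion $\ell_2(Y)\hookrightarrow \ell_2(X)$, and I would regard $\cB(\ell_2(Y))$ as the corner $p\cB(\ell_2(X))p$, where $p$ is the orthogonal projection onto $\overline{\spann}\{\delta_y\mid y\in Y\}$. Under this identification an operator $a$ on $\ell_2(Y)$ is extended by $0$ off $\ell_2(Y)$, so its support computed in $X\times X$ coincides with its support computed in $Y\times Y$; in particular $\supp(a)\subseteq Y\times Y$. This is the only bookkeeping step, and it is what makes the comparison $\mathcal E\subseteq\mathcal E'$ (a priori between a subset of $\mathcal P(Y\times Y)$ and a subset of $\mathcal P(X\times X)$) sensible.

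For the forward implication, assume $\mathcal E\subseteq\mathcal E'$. Every operator of $\mathcal E$-bounded propagation $a\in\cstu(Y,\mathcal E)$ satisfies $\supp(a)\in\mathcal E\subseteq\mathcal E'$, and by the previous paragraph its support is unchanged when $a$ is viewed inside $\cB(\ell_2(X))$; hence $a$ has $\mathcal E'$-bounded propagation and lies in $\cstu(X,\mathcal E')$. Since the $\mathcal E$-bounded propagation operators are norm-dense in $\cstu(Y,\mathcal E)$, the embedding $\cB(\ell_2(Y))\hookrightarrow\cB(\ell_2(X))$ is isometric, and $\cstu(X,\mathcal E')$ is norm-closed, passing to closures yields $\cstu(Y,\mathcal E)\subseteq\cstu(X,\mathcal E')$.

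For the reverse implication, assume the identity induces $\cstu(Y,\mathcal E)\subseteq\cstu(X,\mathcal E')$, and fix $E\in\mathcal E$. The partial translation $v_E$ belongs to $\cstu(Y,\mathcal E)$, hence to $\cstu(X,\mathcal E')$ by hypothesis. Because the matrix entries of $v_E$ are $0$ or $1$, one has $\norm{e_{x'x'}v_E e_{xx}}=1$ exactly when $(x,x')\in E$, so $E=\{(x,x')\mid \norm{e_{x'x'}v_E e_{xx}}>1/2\}$. Applying Lemma~\ref{lemma:struct} to $(X,\mathcal E')$ with $a=v_E$ and $\eps=1/2$ shows that this set lies in $\mathcal E'$; thus $E\in\mathcal E'$, giving $\mathcal E\subseteq\mathcal E'$.

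I do not expect a serious obstacle: both implications are short once the corner identification is in place. The only point requiring care is that the comparison $\mathcal E\subseteq\mathcal E'$ presupposes viewing $Y\times Y$ as a subset of $X\times X$ and checking that supports of operators on $\ell_2(Y)$ are genuinely preserved under extension by zero. The reverse direction then rests entirely on Lemma~\ref{lemma:struct}, which recovers the coarse structure from norms of compressions $e_{x'x'}ae_{xx}$ of elements $a$ of the uniform Roe algebra.
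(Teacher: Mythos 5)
Your proof is correct. Note that the paper offers no proof of Proposition~\ref{PropInclusionCoarseStructure} (it is explicitly left to the reader), but your argument is evidently the intended one: the forward direction is the standard density-plus-norm-closure argument after the extension-by-zero identification, and the reverse direction recovers each $E\in\mathcal E$ as $\{(x,x')\mid \norm{e_{x'x'}v_E e_{xx}}>1/2\}$ and then invokes Lemma~\ref{lemma:struct}, which the paper isolates ``for future reference'' immediately before the proposition precisely for this purpose. Your care with the corner identification and with supports being unchanged under extension by zero is exactly the bookkeeping the statement requires.
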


\begin{lemma}\label{L.reflection} 
Let $(X,\mathcal E)$ be a coarse space. Then $\cstu(X,\cE)=\bigcup_{\cF} \cstu(X,\cF)$, where $\cF$ ranges over all countably generated reducts of $\cE$.
\end{lemma}

\begin{proof} 
Only the direct inclusion requires a proof. For $a\in \cstu(X,\cE)$ and $m\in\N$ there is $b_m\in \cstu(X)$ with $\supp(b_m)\in \cE$ such that $\|a-b_m\|<1/m$. Hence $a$ belongs to $\cstu(X,\cF)$, where $\cF$ is the coarse structure generated by $\{\supp(b_m)\mid m\in \bbN\}$. 	
As $a$ was arbitrary, this completes the proof. 
\end{proof}

If $(X,\cE)$ is a connected u.l.f. coarse structure, Lemma~\ref{L.reflection} implies that $\cstu(X,\cE)$ is an inductive limit of a net of uniform Roe algebras associated with metric spaces. In case $X$ is not connected, Lemma~\ref{L.reflection} and Lemma~\ref{L.infty-valued-metric} together imply that $\cstu(X,\cE)$ is an inductive limit of a net of uniform Roe algebras associated to $[0,\infty]$-valued metrics on $X$. 

\subsection{Examples of nonmetrizable coarse spaces} 
\label{S.ex.nonmetrizable} 
For good measure, we give  few examples of nonmetrizable u.l.f. coarse spaces, after a simple observation.  Every connected u.l.f. coarse structure $\cE$ on an uncountable set $X$ is nonmetrizable. For this, fix a metric $d$  compatible with $\cE$, and let $x\in X$. Let $X_n=\{y\in X\mid d(x,y)\leq n\}$. As $\cE$ is connected, $X=\bigcup X_n$, so there is $n$ such that $X_n$ is uncountable. This contradicts the fact that $\cE$ is u.l.f.. 

\begin{enumerate}
\item The maximal coarse structure $\cE_{\max}$. 
On an infinite set $X$ consider the family of all subsets of $X\times X$ whose vertical and horizontal sections have uniformly bounded cardinalities. This is clearly a u.l.f. coarse structure. A diagonalization argument shows that it is not countably generated, even if $X$ is countable. It is also clear that every u.l.f. coarse structure on $X$ is included in this structure.   For this reason this structure is denoted $\cE_{\max}$. 

\item In \S\ref{S.nonmetrizable}  we show that there are $2^{2^{\aleph_0}}$ 
	coarsely inequivalent nonmetrizable coarse substructures of the standard metric structure on $\bbN$.  
\item\label{I.Group} Suppose that $\Gamma$ is a discrete group. For a finite $S\subseteq \Gamma$, let 
\begin{equation}\label{Ex.Group}
E_{S}=\{(g,h)\in \Gamma^2\mid gh^{-1}\in S\}. 
\end{equation}
Then the family $\cE_\Gamma$ of all subsets of $\Gamma^2$ included in $E_S$ for some finite $S\subseteq \Gamma$ is a coarse structure on $\Gamma$. All vertical and all horizontal sections of $E_{S}$ have cardinality at most $|S|$, and therefore this structure is u.l.f.. 
If $\Gamma$ is countable, then $\cE_\Gamma$  is countably generated and therefore metrizable. Since this space is clearly connected, it is nonmetrizable if $\Gamma$ is uncountable. 
\end{enumerate}

 If $\Gamma$ is finitely generated then $\cE_\Gamma$ coincides with the coarse structure associated with the Cayley graph metric on $\Gamma$. This is because if $S$ generates~$\Gamma$ then the sets~$E_{S^m}$, for $m\in \bbN$, are cofinal in $\cE_\Gamma$.  

\begin{proposition}\label{P.Group}
	For any uncountable cardinal $\kappa$ there exists an uncountable u.l.f. coarse space $(X,\cE)$ such that $X$ has cardinality $\kappa$ and for all $Y\subseteq X$ the induced coarse structure $(Y,\cE\cap Y\times Y)$ is metrizable if and only if $Y$ is countable. 
\end{proposition}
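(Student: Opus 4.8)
The plan is to realize $(X,\cE)$ as the group coarse space $(\Gamma,\cE_\Gamma)$ introduced in \S\ref{S.ex.nonmetrizable}\eqref{I.Group}, for a suitably chosen group $\Gamma$. Concretely, I would fix any group $\Gamma$ of cardinality $\kappa$; for instance $\Gamma=\bigoplus_{\kappa}\Z/2\Z$ works, since this group has cardinality $\kappa$ whenever $\kappa$ is infinite. Set $X=\Gamma$ and $\cE=\cE_\Gamma$, which is u.l.f. and connected (it contains every finite subset of $\Gamma\times\Gamma$, as each such set is contained in some $E_S$ with $S$ finite). It then remains to analyse the induced coarse structure $\cE\cap(Y\times Y)$ for $Y\subseteq X$ and to show it is metrizable exactly when $Y$ is countable.

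First I would record the shape of the induced structure. Because every member of $\cE_\Gamma$ is a subset of some $E_S$ with $S\subseteq\Gamma$ finite, we have
\[
\cE_\Gamma\cap(Y\times Y)=\big\{E'\subseteq Y\times Y\mid E'\subseteq E_S\cap(Y\times Y)\text{ for some finite }S\subseteq\Gamma\big\}.
\]
Moreover, writing $D=\{gh^{-1}\mid g,h\in Y\}$ for the difference set of $Y$, one has $E_S\cap(Y\times Y)=\bigcup_{s\in S\cap D}\big(E_{\{s\}}\cap(Y\times Y)\big)$, since only differences that actually occur in $Y$ are relevant. In particular the induced structure is always connected: any finite $F\subseteq Y\times Y$ lies in $\cE_\Gamma$ and is contained in $Y\times Y$, hence lies in $\cE_\Gamma\cap(Y\times Y)$.

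For the uncountable case, if $Y$ is uncountable then $\cE\cap(Y\times Y)$ is a connected u.l.f. coarse structure on an uncountable set (the u.l.f. property is inherited by any restriction, as sections only shrink), and such a structure is nonmetrizable by the observation in \S\ref{S.ex.nonmetrizable}. For the countable case, suppose $Y$ is countable, so that $D$ is countable as well. Then the countable family $\cF=\{E_{\{s\}}\cap(Y\times Y)\mid s\in D\}$ generates $\cE\cap(Y\times Y)$: by the displayed description every member of the induced structure is a subset of a finite union of elements of $\cF$, while conversely $\langle\cF\rangle\subseteq\cE\cap(Y\times Y)$. Hence $\cE\cap(Y\times Y)$ is connected and countably generated, and therefore metrizable by \cite[Theorem 2.55]{RoeBook}.

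The only genuinely delicate point is the countable case, where one must resist the temptation to argue that ``a restriction of a countably generated structure is countably generated'': this is false in general, because restriction does not commute with composition, so that $(E_1\circ E_2)\cap(Y\times Y)$ may strictly contain $(E_1\cap(Y\times Y))\circ(E_2\cap(Y\times Y))$ once intermediate points are allowed to lie outside $Y$. The special algebraic form of $\cE_\Gamma$ circumvents this obstacle: every entourage is a subset of a finite union of the single-difference graphs $E_{\{s\}}$, so the countable index set $D$ already yields a countable generating family using only closure under finite unions and subsets, with no appeal to composition.
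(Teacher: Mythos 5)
Your construction and argument are correct, and the approach is essentially the paper's: the paper also takes $X=(\Gamma,\cE_\Gamma)$ for an arbitrary group $\Gamma$ of cardinality $\kappa$, and it handles uncountable $Y$ exactly as you do, via the observation that a connected u.l.f.\ coarse structure on an uncountable set is nonmetrizable. The only divergence is in the countable case: the paper encloses $Y$ in a countable subgroup $\Lambda$, notes that $E_S\cap(\Lambda\times\Lambda)=E_{S\cap\Lambda}\cap(\Lambda\times\Lambda)$, so that the restriction of $\cE_\Gamma$ to $\Lambda$ is the group coarse structure $\cE_\Lambda$, which is countably generated because a countable $\Lambda$ has only countably many finite subsets, and then restricts further from $\Lambda$ to $Y$; you instead generate the restriction to $Y$ directly from the countably many graphs $E_{\{s\}}\cap(Y\times Y)$, $s\in D$, where $D$ is the difference set of $Y$. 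Both are sound and of comparable length.

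Your closing ``delicate point,'' however, is mistaken, and the misconception behind it is worth correcting. The restriction of a countably generated coarse structure to a subset \emph{is} always countably generated: by Lemma~\ref{L.infty-valued-metric}, countable generation is equivalent to the existence of a countable cofinal family $\{G_k\}_{k\in\N}$, and then $\{G_k\cap(Y\times Y)\}_{k\in\N}$ is a countable cofinal family for the restriction, since any entourage contained in $Y\times Y$ and in some $G_k$ is contained in $G_k\cap(Y\times Y)$; applying the same lemma to $(Y,\cE\cap(Y\times Y))$ gives countable generation. Cofinality, not compatibility with composition, is what passes to subspaces, so your (true) observation that $(E_1\circ E_2)\cap(Y\times Y)$ may exceed $(E_1\cap(Y\times Y))\circ(E_2\cap(Y\times Y))$ is beside the point. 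What genuinely fails is the analogous statement for \emph{reducts}, i.e.\ sub-coarse-structures on the same underlying set: this is exactly what Lemma~\ref{lemma:uf} exhibits, where $\cE_\cU\subseteq\cE_{|\cdot|}$ is not countably generated for a nonprincipal ultrafilter $\cU$. You appear to have conflated reducts with subspace restrictions. None of this damages your proof---your direct argument never invokes the false claim, and the correct statement could not have been applied at the level of $\Gamma$ anyway, since for uncountable $\Gamma$ the ambient structure $\cE_\Gamma$ is itself not countably generated---but the remark as stated is false and should be removed or replaced by the correct dichotomy between reducts and restrictions.
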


\begin{proof} Let $\Gamma$ be any group of cardinality $\kappa$, and let $X$ be $(\Gamma,\cE_\Gamma)$. 
	If $\Lambda$ is a subgroup of $\Gamma$ and $S\subseteq \Gamma$ is finite then clearly $E_S\cap (\Lambda\times \Lambda)=E_{S\cap \Lambda}\cap (\Lambda\times \Lambda)$ and $\cE_\Lambda=\{E\cap \Lambda^2\mid E\in \cE_\Gamma\}$, therefore $(\Lambda,\cE_\Lambda)$ is a coarse subspace of $(\Gamma,\cE_\Gamma)$. Every countable subspace of $\Gamma$ is included in a countable subgroup and therefore metrizable. Since $\Gamma$ is connected and u.l.f., no uncountable subspace is metrizable.  
\end{proof}

Interesting groups $\Gamma$ should result in interesting coarse structures $\cE_\Gamma$. There is a rich supply of uncountable groups with curious properties (see e.g., \cite{shelah1980on},~\cite{eklof2002almost},  \cite{magidor1994when}). 

By a result of Higson and Yu, if $\Gamma$ is a finitely generated discrete group then the uniform Roe algebra $\cstu(\Gamma)$ is isomorphic to the reduced crossed product $\ell_\infty(\Gamma) \rtimes_{r,\alpha} \Gamma$ associated to the translation action of $\Gamma$ on $\ell_\infty(\Gamma)$ (see e.g., \cite[Proposition~5.1.3]{BrownOzawa}) This observation extends to the coarse structure defined in \eqref{Ex.Group} above.

\begin{proposition}\label{P.Crossed} If $\Gamma$ is a discrete group equipped with the coarse structure defined in \eqref{Ex.Group}, then $\cstu(\Gamma)$  is isomorphic to $\ell_\infty(\Gamma)\rtimes_{r,\alpha} \Gamma$, where $\alpha$ is the translation action of $\Gamma$ on $\ell_\infty(\Gamma)$. 
\end{proposition}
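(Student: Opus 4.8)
The plan is to realize both \cstar-algebras as the norm closure, inside $\cB(\ell_2(\Gamma))$, of one and the same $^*$-algebra of operators. Work with the covariant representation $(\pi,\lambda)$ of the system $(\ell_\infty(\Gamma),\Gamma,\alpha)$ on $\ell_2(\Gamma)$, where $\pi$ represents $\ell_\infty(\Gamma)$ by multiplication operators (so $\pi(\ell_\infty(\Gamma))$ is exactly the algebra of diagonal operators) and $\lambda$ is the left regular representation, $\lambda_s\delta_h=\delta_{sh}$. The first thing I would record is the standard fact that this representation is unitarily equivalent to the regular one, so that $C^*\bigl(\pi(\ell_\infty(\Gamma)),\lambda(\Gamma)\bigr)$ is a faithful copy of $\ell_\infty(\Gamma)\rtimes_{r,\alpha}\Gamma$; this is the content of \cite[Proposition~5.1.3]{BrownOzawa}, whose proof uses neither finite generation nor countability of $\Gamma$. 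Granting this, it remains to identify this concrete \cstar-algebra with $\cstu(\Gamma,\cE_\Gamma)$, which is pure propagation bookkeeping.

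For one inclusion I would compute the supports of the generators. Since $\lambda_s\delta_h=\delta_{sh}$, the operator $\lambda_s$ has support $\{(h,sh)\mid h\in\Gamma\}\subseteq E_{\{s^{-1}\}}$, and each diagonal operator $\pi(f)$ has support contained in $\Delta_\Gamma\subseteq E_{\{e\}}$, where $e$ is the identity of $\Gamma$. As $E_{\{s^{-1}\}},E_{\{e\}}\in\cE_\Gamma$, every generator, and hence every element of the $^*$-algebra they generate, has $\cE_\Gamma$-bounded propagation and lies in $\cstu(\Gamma,\cE_\Gamma)$. Since $\cstu(\Gamma,\cE_\Gamma)$ is norm closed, this gives $C^*\bigl(\pi(\ell_\infty(\Gamma)),\lambda(\Gamma)\bigr)\subseteq\cstu(\Gamma,\cE_\Gamma)$.

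For the reverse inclusion I would show that every finite-propagation operator already lies in the algebraic crossed product. Let $a\in\cB(\ell_2(\Gamma))$ have $\supp(a)\subseteq E_S$ with $S\subseteq\Gamma$ finite, and write $a=\sum_{s\in S}a_s$, where $a_s$ is the restriction of $a$ to the slice $\{(x,y)\mid xy^{-1}=s\}$ (these slices partition $E_S$). A matrix-entry comparison shows $a_s=\pi(f_s)\lambda_{s^{-1}}$, where $f_s\in\ell_\infty(\Gamma)$ is defined by $f_s(y)=\langle a\delta_{sy},\delta_y\rangle$ and satisfies $\norm{f_s}\le\norm{a}$. Thus $a\in C^*\bigl(\pi(\ell_\infty(\Gamma)),\lambda(\Gamma)\bigr)$, and taking norm closures yields $\cstu(\Gamma,\cE_\Gamma)\subseteq C^*\bigl(\pi(\ell_\infty(\Gamma)),\lambda(\Gamma)\bigr)$. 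Combining the two inclusions identifies $\cstu(\Gamma,\cE_\Gamma)$ with $\ell_\infty(\Gamma)\rtimes_{r,\alpha}\Gamma$, as desired.

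The one genuinely nontrivial point is the first step: verifying that $(\pi,\lambda)$ generates exactly the \emph{reduced} crossed product, equivalently that it is unitarily equivalent to the regular representation and not merely to some smaller quotient. I expect this to be the main obstacle to a self-contained treatment, as it rests on Fell's absorption principle together with the faithfulness of the multiplication representation of $\ell_\infty(\Gamma)$ on $\ell_2(\Gamma)$. For a self-contained argument one would exhibit the intertwining unitary $W(\delta_s\otimes\delta_g)=\delta_s\otimes\delta_{sg}$ on $\ell_2(\Gamma)\otimes\ell_2(\Gamma)$ relating the defining regular representation of the crossed product to $(\pi,\lambda)$, but invoking \cite[Proposition~5.1.3]{BrownOzawa} is cleaner. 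I emphasize that no countability or finite-generation hypothesis on $\Gamma$ enters: the argument is verbatim the one for the finitely generated case, which is precisely why the observation extends to the coarse structure $\cE_\Gamma$.
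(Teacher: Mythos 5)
Your proposal is correct and takes essentially the same route as the paper: your slice decomposition $a=\sum_{s\in S}\pi(f_s)\lambda_{s^{-1}}$ of a finite-propagation operator is the same computation as the paper's decomposition $a=\sum_{g}\Upsilon(au_g^*)u_g$ via the conditional expectation onto $\ell_\infty(\Gamma)$, and both arguments thereby identify $\cstu(\Gamma)$ with the concrete \cstar-algebra generated by $\ell_\infty(\Gamma)$ and the translation unitaries. For the remaining identification of that algebra with $\ell_\infty(\Gamma)\rtimes_{r,\alpha}\Gamma$ you invoke \cite[Proposition~5.1.3]{BrownOzawa} together with the Fell-absorption unitary, which is precisely how the paper concludes, via the unitary $u(\delta_f\otimes\delta_g)=\delta_f\otimes\delta_{gf}$ on $\ell_2(\Gamma)\otimes\ell_2(\Gamma)$.
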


\begin{proof} First we need to observe that $\cstu(\Gamma)$ is the \cstar-subalgebra of $\cB(\ell_2(\Gamma))$ generated by $\ell_\infty(\Gamma)$ and the unitaries $u_g$, for $g\in \Gamma$, defined by their action on the basis, $u_g \delta_h=\delta_{gh}$. Each $u_g$ has its support included in $E_{\{g\}}$, and it therefore suffices to prove that every operator with support included in $E_{S^m}$ for some $m$ is equal to a finite sum of the form $a=\sum_g a_g u_g$, with $a_g\in \ell_\infty(\Gamma)$.  If $\supp(a)\subseteq E_{S^m}$, then for $g\in S^m$ let $a_g=\Upsilon(a u_g^*)$ where $\Upsilon$ is the standard conditional expectation of $\cB(\ell_2(\Gamma))$ onto $\ell_\infty(\Gamma)$.\footnote{We recall the definition of conditional expectation in \S\ref{SectionAppli}.} Then clearly $a=\sum_{g\in S^m} a_g u_g$, as required. 

The remaining part of the proof  follows the proof in the case when $\Gamma$ is finitely generated (\cite[Proposition~5.1.3]{BrownOzawa}) closely. Let $u$ be the unitary on $\ell_2(\Gamma)\otimes \ell_2(\Gamma)$ defined by its action on the basis, $u(\delta_f\otimes \delta_g)=\delta_f \otimes \delta_{gf}$. 
A straightforward computation using the observation from the first paragraph shows that $u (\ell_\infty(\Gamma)\rtimes_{r,\alpha} \Gamma) u^*=1\cdot \bbC\otimes \cstu(\Gamma)\cong \cstu(\Gamma)$, completing the proof. 
\end{proof}

\subsection{Geometric assumptions}\label{ss:geom}

One of the geometric regularity conditions we will assume on the coarse structures of interest is property A. The following is  \cite[Definition~11.35]{RoeBook}.

\begin{definition} \label{Def.PropA} A coarse structure $(X,\cE)$ has \emph{property A} if for every $E\in \cE$ and every $m\geq 1$ there exist $F=F(E,m)\in \cE$ and a function $x\in X\mapsto \xi_x\in \ell_2(X)$ such that 
\begin{enumerate}
	\item $\|\xi_x\|=1$ for all $x\in X$, 
	\item $\{(x,x')\in X\times X\mid x'\in \supp(\xi_x)\}\subseteq F$, and 
	\item $(x,x')\in E$ implies $\|\xi_x-\xi_{x'}\|<1/m$, for all $x,x'\in X$. 
\end{enumerate}	
\end{definition}

 Nuclearity is arguably the most studied regularity property of \cstar-algebras. The following result can be found in  \cite[Theorem 5.5.7]{BrownOzawa} for metric spaces. However  it is not used in its proof, so we state it below for coarse spaces in general. 

\begin{theorem}\label{ThmNuclearPropA}
Let $(X,\cE)$ be a u.l.f. coarse space. Then $(X,\cE)$ has property~A if and only if $\cstu(X)$ is nuclear. \qed 
\end{theorem}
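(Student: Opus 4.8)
The plan is to prove both implications, after recording the convenient reformulation of property A that makes the connection to nuclearity transparent: Definition~\ref{Def.PropA} is equivalent to requiring that for every $E\in\cE$ and $\eps>0$ there is a kernel $h\colon X\times X\to\C$ which is positive definite, satisfies $h(x,x)=1$ and $\supp(h)\in\cE$, and has $|h(x,y)-1|<\eps$ whenever $(x,y)\in E$. Indeed, the functions $\xi_x$ of Definition~\ref{Def.PropA} produce $h(x,y)=\langle\xi_x,\xi_y\rangle$, for which $\supp(h)\subseteq F\circ F^{-1}\in\cE$ (because $h(x,y)\neq0$ forces $\supp\xi_x\cap\supp\xi_y\neq\emptyset$) and $|h(x,y)-1|=|\langle\xi_x,\xi_y-\xi_x\rangle|\le\|\xi_x-\xi_y\|<1/m$ on $E$; conversely any such $h$ factors through a Hilbert space as $h(x,y)=\langle\xi_x,\xi_y\rangle$ with $\|\xi_x\|=1$ and $\supp(\xi_x)$ inside a single entourage section. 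Only the uniform local finiteness of $(X,\cE)$, and never a compatible metric, enters these manipulations, which is exactly the content of the remark preceding the statement.

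For the forward direction (property A $\Rightarrow$ nuclearity) I would verify the completely positive approximation property. Fix $E$ and a kernel $h$ as above, write $h(x,y)=\langle\xi_x,\xi_y\rangle$ with $\xi_x$ supported on the section $F[x]=\{z\mid(x,z)\in F\}$, whose size is bounded by some $N$ independent of $x$ since $(X,\cE)$ is u.l.f. Let $D_z\in\ell_\infty(X)$ be the diagonal operator with entries $(D_z)_{xx}=\xi_x(z)$; each $D_z$ is supported on the finite set $F^{-1}[z]$ of size $\le N$, and $\sum_z D_z^*D_z=1$ because $\sum_z|\xi_x(z)|^2=\|\xi_x\|^2=1$. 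A direct computation identifies the Schur multiplier $a\mapsto m_h(a)$, with $\langle m_h(a)\delta_x,\delta_y\rangle=h(x,y)\langle a\delta_x,\delta_y\rangle$, as
\[
m_h(a)=\sum_{z\in X}D_z^*\,a\,D_z .
\]
This factors as $m_h=\psi\circ\phi$, where $\phi\colon\cstu(X)\to\prod_z\cB(\ell_2(F^{-1}[z]))$ compresses $a$ to the finite blocks and $\psi\big((b_z)_z\big)=\sum_z D_z^*b_zD_z$; both are completely positive and contractive, contractivity of $\psi$ following from $\sum_z D_z^*D_z=1$. The middle algebra is an $\ell_\infty$-product of matrix algebras of uniformly bounded size, hence nuclear, so a finite-dimensional algebra can be inserted in the middle automatically. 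Finally a standard u.l.f.\ estimate (via Lemma~\ref{lemma:splitting}: an operator supported on $E$ with all entries below $\delta$ has norm at most $N_E\delta$) gives $\|m_h(a)-a\|\le(N_E/m)\|a\|$ for $a$ supported on $E$, so the maps $m_h$ converge to the identity in the point-norm topology on the dense $^*$-subalgebra of finite-propagation operators and, being uniformly contractive, on all of $\cstu(X)$. This establishes nuclearity.

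The converse (nuclearity $\Rightarrow$ property A) is where I expect the real difficulty. The cleanest route is through the coarse groupoid $G(X,\cE)$ in the sense of \cite{SkandalisTuYu2002}, a locally compact, Hausdorff, principal étale groupoid with $\cstu(X,\cE)\cong\Cstar_r(G(X,\cE))$; property A of $(X,\cE)$ is equivalent to topological amenability of $G(X,\cE)$, and for this class of groupoids amenability is equivalent to nuclearity of the reduced groupoid \cstar-algebra (Anantharaman--Delaroche). The delicate point is to check that this dictionary survives the passage to nonmetrizable coarse spaces, where the unit space $\beta X$ is no longer second countable and $G(X,\cE)$ fails to be $\sigma$-compact; what the amenability/nuclearity equivalence actually uses is the étale and local-compactness structure, which persists. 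Alternatively one can argue by reversing the construction above: from c.p.c.\ maps witnessing the approximation property one compresses to finite propagation—composing with the canonical diagonal conditional expectation $\Upsilon$ and a partial-translation decomposition (Lemma~\ref{lemma:splitting})—to manufacture, for each $E$ and $\eps$, a finite-propagation kernel close to $1$ on $E$ which, after a small self-adjoint perturbation, is positive definite; this is precisely the reformulation of property A recorded in the first paragraph. Either way, the hard part is the extraction of the almost-invariant data, and that is where the genuine content of the converse lies.
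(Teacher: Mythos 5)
Your forward implication (property A $\Rightarrow$ nuclearity) is correct and complete: the kernel $h(x,y)=\langle\xi_x,\xi_y\rangle$ supported in $F\circ F^{-1}$, the identification of the Schur multiplier with $a\mapsto\sum_z D_z^*aD_z$, the ucp factorization through the uniformly subhomogeneous (hence nuclear) algebra $\prod_z\cB(\ell_2(F^{-1}[z]))$, and the norm estimate via Lemma~\ref{lemma:splitting} use only uniform local finiteness. For comparison, the paper offers no argument at all: it observes that the proof of \cite[Theorem 5.5.7]{BrownOzawa} for metric spaces never uses the metric beyond uniform local finiteness, so the statement transfers verbatim; your forward direction amounts to verifying half of that claim.

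The genuine gap is the converse, and you flag it yourself (``the hard part is the extraction of the almost-invariant data''): neither of your routes is a proof. In the groupoid route, both bridges are unestablished exactly in the regime that matters: the equivalence of property A with topological amenability of $G(X,\cE)$ is proved in \cite{SkandalisTuYu2002} for metric spaces, and the equivalence of amenability with nuclearity of the reduced groupoid \cstar-algebra is standardly developed under second countability hypotheses; declaring that both ``persist'' for the $\beta X$-based, non-$\sigma$-compact groupoid of a nonmetrizable coarse space is precisely the kind of assertion the theorem asks you to justify. In the direct route, the step ``a finite-propagation kernel close to $1$ on $E$ which, after a small self-adjoint perturbation, is positive definite'' fails as stated: positive definiteness cannot be created by a small perturbation, and truncating to finite propagation the positive-definite kernels that complete positivity of the approximating maps actually produces destroys positive definiteness --- reconciling support control with positivity is the entire difficulty. (Your first paragraph's unproved claim that a controlled positive-definite kernel is realized by unit vectors in $\ell_2(X)$ with controlled supports is a second instance of the same issue; it is true, e.g.\ via $\xi_x\approx T^{1/2}\delta_x$ where $T\in\cstu(X)$ is the positive operator with matrix $h$, but it needs an argument and is what route two ultimately relies on.) Two honest ways to close the converse: either do what the paper does and check line by line that the Brown--Ozawa proof of nuclear $\Rightarrow$ property A uses only uniform local finiteness; or reduce to the countably generated case using the paper's own Section 3 machinery --- nuclearity implies exactness, exactness (unlike nuclearity) passes to the \cstar-subalgebras $\cstu(X,\cF)$ for countably generated reducts $\cF$ (Proposition~\ref{PropInclusionCoarseStructure}, Lemma~\ref{L.reflection}), the metric/countably generated case of the theorem (whose Brown--Ozawa form includes exactness among the equivalent conditions) then gives property A for each such reduct, and property A reflects (Proposition~\ref{PropONLiffPropA}), so $(X,\cE)$ has property A.
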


The format of property A most convenient for our approach is the so-called `Operator Norm Localization property' (ONL, see Definition \ref{DefiONL}). Property~A and the ONL are equivalent for metric spaces (see \cite[Theorem~4.1]{Sako2014}). In~\S\ref{SectionPropA}, we will prove that such an equivalence holds for general u.l.f. coarse spaces (see Corollary~\ref{CorPropA}). 

\begin{definition}\label{DefiONL}
A coarse space $(X,\mathcal E)$ has the \emph{Operator Norm Localization property} (\emph{ONL}) if for every entourage $E\in\mathcal E$ and $m>0$ there is an entourage $F=F(E,m)$ in $\cE$ such that for all $a\in\cstu(X)$ with $\supp(a)\subseteq E$, there is a unit vector $\xi\in \ell_2(X)$ for which $\supp(\xi)$ is $F$-bounded and it satisfies $\norm{a\xi}> (1-\frac{1}{m})\norm{a}$.
\end{definition}

A distinguished sort of operators is related to the failure of property A.  

\begin{definition}\label{DefiGhost}
An operator $a\in \cB(\ell_2(X))$ is called a \emph{ghost} if for all $\eps>0$ there exists a finite $A\subseteq X$ such that $|\langle a\delta_x,\delta_y\rangle|<\eps$ for all $(x,y)$ that lie outside of~$ A\times A$. 
\end{definition}

The following is \cite[Theorem 1.3]{RoeWillett2014}.\footnote{Although \cite[Theorem 1.3]{RoeWillett2014} is proven for metric spaces, the same proof works for infinite-valued metrics, i.e., for countably generated coarse spaces. See the remark following Lemma~\ref{L.reflection}.}

\begin{theorem}\label{ThmGhostProjCompPropA} 
Let $(X,\cE)$ be a u.l.f. coarse space and assume that $\cE$ is countably generated. Then $(X,\cE)$ has property A if and only if all ghost operators in $\cstu(X)$ are compact. \qed 
\end{theorem}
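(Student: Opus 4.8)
The plan is to reduce to the case of a $[0,\infty]$-valued metric and then prove the two implications separately, following the architecture of \cite{RoeWillett2014} while checking that every step survives the replacement of a genuine metric by a $[0,\infty]$-valued one. Since $\cE$ is countably generated, Lemma~\ref{L.infty-valued-metric} furnishes a $[0,\infty]$-valued metric $d$ with $\cE=\cE_d$. Every operator produced below will have $\cE$-bounded propagation built from a single fixed entourage, and u.l.f.\ is exactly what keeps the relevant row and column sums finite, so the passage from metrics to $[0,\infty]$-valued metrics is routine throughout.

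For the forward direction, suppose $(X,\cE)$ has property~A and let $a\in\cstu(X)$ be a ghost. First I would record that a \emph{finite-propagation} ghost is compact: if $\supp(b)\subseteq E\in\cE$, then by u.l.f.\ every row and column of $b$ has at most $N=N(E)$ nonzero entries, and by Definition~\ref{DefiGhost} these entries vanish off a finite set, so deleting the entries of modulus $\le\eps$ leaves a finite-rank operator within $N\eps$ of $b$; hence $b\in\cK(\ell_2(X))$. For a general ghost I would use property~A to form, for each $E$ and $m$, the positive-type kernel $k(x,y)=\langle\xi_x,\xi_y\rangle$ coming from Definition~\ref{Def.PropA}: it has $k(x,x)=1$, satisfies $\supp(k)\subseteq F\circ F^{-1}\in\cE$, and obeys $|1-k(x,y)|<1/m$ for $(x,y)\in E$. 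The associated Schur multiplier $M_k$ is a unital completely positive contraction on $\cstu(X)$; moreover $M_k(a)$ is again a ghost, because $|k|\le 1$ forces its entries to be dominated by those of $a$, and it has finite propagation, hence is compact by the previous step. Writing a finite-propagation approximant of $a$ as a sum of boundedly many partial-translation pieces via Lemma~\ref{lemma:splitting}, one checks $\|M_k(a)-a\|\to 0$ as $m\to\infty$; thus $a$ is a norm limit of compacts and therefore compact.

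For the converse I would argue by contraposition: assuming $(X,\cE)$ fails property~A, I would construct a non-compact ghost. Here I would pass through the Operator Norm Localization property (Definition~\ref{DefiONL}), using that property~A and the ONL are equivalent for u.l.f.\ coarse spaces (Corollary~\ref{CorPropA}, or \cite{Sako2014} in the metric case); failure of property~A then yields an entourage $E$ and an $m$ for which localization fails at every scale $F$. From this failure I would extract a sequence of positive, norm-one, finite-propagation operators supported on pairwise disjoint finite sets $S_n$ escaping to infinity, none of which admits an $F$-bounded $(1-\tfrac1m)$-almost-eigenvector (the guiding example being expander averaging operators, whose entries decay like $1/|S_n|$). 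Averaging and assembling these blocks, the delocalization forces the resulting operator $T$ (or a spectral projection $\chi_{[\delta,1]}(T)$ cut across a spectral gap, so that the cut-off is continuous on the spectrum and the ghost property is preserved under functional calculus) to be non-compact, while the decay of the block entries at infinity makes it a ghost in $\cstu(X)$.

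The main obstacle is precisely this last construction: converting the purely qualitative failure of localization into an operator whose matrix entries are \emph{uniformly small} at infinity (the ghost condition) yet which is nonetheless non-compact. This is the technical heart of \cite{RoeWillett2014}, and the only genuinely new points to verify in our generality are that the blocks can be chosen disjoint, with supports escaping to infinity and with $\cE$-bounded propagation; both follow from u.l.f.\ together with $\cE=\cE_d$, which is what guarantees that the assembled operator lies in $\cstu(X)$ and qualifies as a ghost.
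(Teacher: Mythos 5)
Your proposal is correct and takes essentially the same approach as the paper: the paper gives no independent proof of Theorem~\ref{ThmGhostProjCompPropA}, but cites \cite[Theorem 1.3]{RoeWillett2014} and notes in a footnote (via Lemma~\ref{L.infty-valued-metric} and the remark following Lemma~\ref{L.reflection}) that the Roe--Willett argument works verbatim for $[0,\infty]$-valued metrics, which is precisely your reduction followed by a check that each step of that argument survives. One caution: in the converse direction you should rely on \cite{Sako2014} (as Roe--Willett do) rather than on Corollary~\ref{CorPropA}, since the paper's proof of that corollary itself invokes Theorem~\ref{ThmGhostProjCompPropA}, so that reference would be circular.
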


The second geometric property we will assume is weaker than property~A, and asks that all ghost projections in $\cstu(X)$ are compact. This is  (apparently) more restrictive than  a condition used in  \cite{BragaFarahVignati2018, BragaFarahVignati2019} (see \cite[Definition 1.6]{BragaFarahVignati2018}) that we now describe. If $(X,d)$ is a u.l.f. metric space, a sequence $\{X_n\}_n$ of nonempty finite subsets of $X$ is said to be \emph{sparse} if $d(X_n,X_m)\to\infty$ as $n+m\to\infty$ and $n\neq m$.  In the metric u.l.f. setting, sparse subspace are `everywhere': every sequence of nonempty disjoint finite subsets of the space under consideration has a sparse subsequence.  If $(X,\cE)$ is a u.l.f. coarse structure, it would be natural to call a subspace  sparse if for each $E\in \cE$ there was $n_0$ such that if $n,m\geq n_0$ with $n\neq m$ then $E\cap (X_n\times X_m)$ is empty.  However, it is not difficult to see that there are no sparse subsequences in $(\NN,\cE_{\max})$ defined in \S\ref{S.ex.nonmetrizable}. For this reason, sparse subspaces will not be further discussed in this paper.


\subsection{Graphs}
\label{S.Graphs}
Since two simple results from graph theory will be used in a critical place in our proofs, we recall the basic definitions. 
A pair $G=(V,E)$ is an (undirected and simple) \emph{graph} if $V$ is a set and $E\subseteq V\times V$ is such that $\Delta_V\cap E=\emptyset$ and $E=E^{-1}$. The elements of $V$ comprise the \emph{vertices} of $G$ and the elements of $E$ comprise the \emph{edges} of $G$. Two vertices $v$ and $w$ are \emph{adjacent} if $\{v,w\}$ is an edge. The degree of a vertex $v$, $\deg(v)$, is the cardinality of the set $\{w\in V\mid\{v,w\}\in E\}$. For $n\geq 1$, an $n$-\emph{colouring} of the vertices of $G$ is a partition of $V$ into $n$ pieces none of which contains two adjacent vertices. A straightforward argument using the Axiom of Choice shows that if $\deg(v)< n$ for all $v\in V$ then $G$ is $n$-colourable.

Another well-known graph-theoretic result that we will need is the infinitary version of Hall's Marriage Theorem. It states that if $\cF$ is an indexed family of finite sets such that every finite $F\subseteq \cF$ satisfies $|F|\leq |\bigcup F|$, then there exists an \emph{injective selector} $g\colon \cF\to \bigcup\cF$ (this means that $g(A)\in A$ for all $A\in \cF$ and $A\neq B$ implies $g(A)\neq g(B)$). The infinite version is a consequence of the finite Hall's Marriage Theorem and compactness. The proofs of these results can be found in any introductory text on graph theory. 

%
%
%
%

\section{Property A and reflection}\label{SectionPropA}
The main result of this section is that property A and ONL are equivalent for coarse spaces. Although this was already known (\cite{Sako2013}), we give an alternative shorter proof relying on a reflection argument quite different from the original.
 
Proposition~\ref{PropONLiffPropA} below is an easy application of the standard L\"owenheim--Sko\-lem/Black\-adar closing-off argument (see \cite[\S 7.3]{Fa:STCstar}, \cite[\S II.8.5]{Black:Operator}). 
We introduce some standard terminology, see \cite[\S 7.3]{Fa:STCstar}.\footnote{We will eventually prove that this terminology was not necessary; see Remark~\ref{R.silly}.} 
If $\cE$ is any uncountable set, a family $\cC$ of countable subsets of $\cE$ is a \emph{club} if it satisfies the following. 
\begin{enumerate}
\item ($\cC$ is \emph{cofinal}) For every countable $\cF\subseteq \cE$, there exists $\cF'\in \cC$ such that $\cF\subseteq \cF'$. 
\item ($\cC$ is \emph{$\sigma$-closed}) For every $\subseteq$-increasing sequence $\cF_n$, for $n\in \bbN$, in $\cC$, $\bigcup_n \cF_n\in \cC$. 	
\end{enumerate}
The following is an instance of a general definition, see \cite[Definition~7.3.1]{Fa:STCstar}. 

\begin{definition} 
Let $\bbP$ be a property of u.l.f. coarse spaces. We say that $\bbP$ \emph{reflects} if a u.l.f. coarse space $(X,\cE)$ has property $\bbP$ if and only if the family 
\[
\{\cF\subseteq \cE\mid (X,\langle \cF\rangle)\text{ has property $\bbP$}\}
\]
includes a club.
\end{definition}

A diagonalization argument shows that the intersection of any two clubs on $\cE$ is a club (see e.g., \cite[Proposition~6.2.9]{Fa:STCstar} with $\kappa=\aleph_1$, the least uncountable cardinal). This implies that if two properties of u.l.f. coarse substructures are equivalent for countably generated coarse structures and they both reflect, then they are equivalent for arbitrary coarse spaces.

\begin{proposition}\label{PropONLiffPropA}
Each of the following properties of u.l.f. coarse spaces reflects. 
\begin{enumerate}
\item\label{Item.PropA} Property~$A$.
\item\label{Item.PropONL} ONL.
\item\label{Item.Ghost} All ghosts in the associated uniform Roe algebra are compact.
\end{enumerate}
\end{proposition}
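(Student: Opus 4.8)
The plan is to treat the three properties in parallel, separating in each case the two directions of the biconditional defining reflection. Throughout, write $\mathcal C_{\bbP}=\{\cF\subseteq\cE\text{ countable}\mid (X,\langle\cF\rangle)\text{ has }\bbP\}$, so that the goal is to show $(X,\cE)$ has $\bbP$ exactly when $\mathcal C_{\bbP}$ includes a club.

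First I would handle the \emph{upward} direction: a club contained in $\mathcal C_{\bbP}$ forces $(X,\cE)$ to have $\bbP$. For property A and ONL this is immediate from cofinality of the club $\mathcal C$. Given $E\in\cE$ and $m\geq 1$, cofinality produces $\cF\in\mathcal C$ with $E\in\langle\cF\rangle$; the witnessing entourage $F(E,m)\in\langle\cF\rangle\subseteq\cE$ together with its associated unit vector(s), guaranteed because $(X,\langle\cF\rangle)$ has the property, satisfies the defining conditions of Definition~\ref{Def.PropA} (resp. Definition~\ref{DefiONL}) verbatim in $(X,\cE)$. For ONL one uses in addition that any $a$ with $\supp(a)\subseteq E\in\langle\cF\rangle$ already lies in $\cstu(X,\langle\cF\rangle)$, so the universal quantifier over $a$ in the reduct already covers the relevant operators. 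For ghost-compactness I would invoke Lemma~\ref{L.reflection}: a ghost $a\in\cstu(X,\cE)$ lies in $\cstu(X,\langle\cF_0\rangle)$ for some countable $\cF_0$, cofinality enlarges $\cF_0$ to some $\cF\in\mathcal C$, and since being a ghost is an intrinsic condition on the matrix entries of $a$ (independent of the coarse structure), $a$ remains a ghost in $\cstu(X,\langle\cF\rangle)$ and is therefore compact.

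Next, the \emph{downward} direction: $(X,\cE)$ has $\bbP$ implies $\mathcal C_{\bbP}$ includes a club. For ghost-compactness this is trivial, since $\cstu(X,\langle\cF\rangle)\subseteq\cstu(X,\cE)$ and ghostness is intrinsic, so every ghost of a reduct is a ghost of $\cstu(X,\cE)$ and hence compact; thus $\mathcal C_{\mathrm{ghost}}$ is the entire (club) family of countable subsets of $\cE$. The substance lies in property A and ONL, where I would run the standard closing-off argument. Using the Axiom of Choice, fix a Skolem function $(E,m)\mapsto F(E,m)\in\cE$ selecting a witnessing entourage. Starting from an arbitrary countable $\cF_0\subseteq\cE$, iterate: having $\cF_n$, let $\mathcal W_n$ be the countable collection of word entourages obtained from $\cF_n$ by finitely many applications of $\cup$, $\circ$ and ${}^{-1}$ (these are cofinal in $\langle\cF_n\rangle$), and set $\cF_{n+1}=\cF_n\cup\{F(W,m)\mid W\in\mathcal W_n,\ m\geq 1\}$. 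Put $\cF=\bigcup_n\cF_n$, a countable subset of $\cE$. To see $\cF\in\mathcal C_{\bbP}$, note that any $E\in\langle\cF\rangle$ is contained in some word entourage $W$ over $\cF$; since $W$ uses only finitely many generators, it already lies in some $\mathcal W_n$, whence $F(W,m)\in\cF_{n+1}\subseteq\langle\cF\rangle$, and because properties A and ONL are monotone under passing to a subset of the entourage, the ambient witness for $(W,m)$ certifies the property for $(E,m)$ inside $(X,\langle\cF\rangle)$.

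Finally I would identify the club: let $\mathcal C$ be the family of countable $\cF$ that are \emph{closed}, meaning $F(W,m)\in\cF$ for every word entourage $W$ over $\cF$ and every $m$. The construction above shows $\mathcal C$ is cofinal, and it is $\sigma$-closed because a word entourage over an increasing union $\bigcup_n\cF_n$ uses finitely many generators and hence appears at some finite stage, so closedness is inherited by the union; this $\mathcal C\subseteq\mathcal C_{\bbP}$ is the desired club. The one genuinely delicate point, and the step I would be most careful about, is that the property must be witnessed for \emph{every} $E\in\langle\cF\rangle$ rather than merely for $E\in\cF$; this is precisely what forces the passage to the countable cofinal family $\mathcal W_n$ of word entourages, and it relies on Lemma~\ref{L.infty-valued-metric} (countably generated coarse structures admit countable cofinal sets) together with the monotonicity of properties A and ONL under shrinking $E$.
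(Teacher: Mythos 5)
Your proof is correct and follows essentially the same route as the paper: a closing-off argument with a choice (Skolem) function $(E,m)\mapsto F(E,m)$, exploiting weak monotonicity of the witnesses under shrinking $E$ for property~A and ONL, and using Lemma~\ref{L.reflection} together with the fact that ghostness is intrinsic to the operator for item~\eqref{Item.Ghost}. The only cosmetic difference is that you exhibit a sub-club of ``closed'' countable families inside $\mathcal C_{\bbP}$, whereas the paper verifies cofinality and $\sigma$-closure of $\mathcal C_{\bbP}$ itself; both satisfy the definition of reflection.
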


\begin{proof}
The proofs of \eqref{Item.PropA} and \eqref{Item.PropONL} use a similar closing-off argument. Lem\-ma~\ref{L.infty-valued-metric} implies that a coarse structure $\cF$ is countably generated if and only if it has a countable cofinal subset. This subset consists of the uniformities $\{(x,x')\in X\times X\mid d(x,x')<m\}$, for $m\in \bbN$, where $d$ is some $[0,\infty]$-valued metric that generates~$\cF$. 

Fix a u.l.f. coarse space $(X,\cE)$. For \eqref{Item.PropA}, note that Definition~\ref{Def.PropA} asserts that for every $E\in \cE$ and $m\geq 1$ there exist $F(E,m)\in \cE$ and a function $x\in X\mapsto \xi_x\in \ell_2(X)$ that does not depend on the coarse structure (apart from $F(E,m)$). This observation gives the converse implication: If $\cE$ has arbitrarily large countable structures closed under this operation, then~$\cE$ is closed under this operation as well. Moreover, this association is `weakly monotonic' in the sense that if $E'\subseteq E$ then $F(E,m)$ satisfies the requirements for $F(E',m)$.\footnote{We are not claiming that one can actually choose the function $(E,m)\mapsto F(E,m)$ to be monotonic.} 

For the direct implication, we need to show that if $(X,\cE)$ has property~A, then the set $\cC_A$ is cofinal and $\sigma$-closed, where
\[
\cC_A=\{\cF\subseteq \cE\mid (X,\langle \cF\rangle)\text{ has property A}\}.
\]
For a countable reduct $\cF$ of $\cE$ define $\tilde \cF$ as follows. Choose a countable cofinal subset $F_m$, for $m\in \bbN$, of $\cF$, and let $\tilde \cF$ be the coarse structure generated by $F(F_m,n)$, for $m\in \bbN$ and $n\in \bbN$. 
For a countably generated reduct $\cF_0\subseteq \mathcal E$ define $\cF_{n+1}=\tilde \cF_n$, for $n\geq 0$. Let $\cF'$ be the reduct generated by $\bigcup_n \cF_n$. It is clearly countably generated. By the weak monotonicity of the operation $F\mapsto F(E,m)$, the coarse structure $(X,\cF')$ has property~A, and therefore $\cC_A$ is cofinal. To show that $\cC_A$ is $\sigma$-closed, note that if $\cF=\bigcup_{n\in\N}\cF_n$, and $E\in\cF$, then there is $n$ such that $E\in\cF_n$. If each $\cF_n$ is closed by the operations $(E_n)\mapsto F(E,m)$, so is $\cF$. (Alternatively, note that $\cstu(X,\cF)$ is the direct limit of the nuclear \cstar-algebras $\cstu(X,\cF_n)$, hence if the latters are nuclear, so is $\cstu(X,\cF)$, meaning that $(X,\cF)$ has property A by Theorem~\ref{ThmNuclearPropA}).

To prove \eqref{Item.PropONL} note that the association $(E,m)\mapsto F(E,m)$ given by Definition~\ref{DefiONL} is weakly monotonic. Therefore the proof is a closing-off argument as in the proof of \eqref{Item.PropA}. The converse implication is again straightforward.

For \eqref{Item.Ghost}, note that Lemma~\ref{L.reflection} implies that there exists a noncompact ghost in $\cstu(X,\cE)$ if and only if there exists a noncompact ghost in $\cstu(X,\cF)$ for some countably generated reduct $\cF$ of $\cE$.
\end{proof}

The conclusion of the following corollary is a combination of known results and the equivalence \eqref{Item1} $\Leftrightarrow$ \eqref{Item2} announced in \cite{Sako2013}.

\begin{corollary}\label{CorPropA}
Let $(X,\mathcal E)$ be a u.l.f. coarse space. The following are equivalent. 
\begin{enumerate}
\item\label{Item1} $(X,\cE)$ has property A.
\item\label{Item2} $(X,\cE)$ has ONL.
\item\label{Item3} All ghost operators in $\cstu(X)$ are compact. 
\item\label{Item4} $\cstu(X)$ is nuclear. 
\item\label{Item1.1} Every countably generated reduct of $(X,\cE)$ has property A.
\item\label{Item2.1} Every countably generated reduct of $(X,\cE)$ has ONL.
\item\label{Item3.1} For every countably generated reduct $(X,\cF)$ of $(X,\cE)$, all ghost operators in $\cstu(X,\cF)$ are compact. 
\item\label{Item4.1} For every countably generated reduct $(X,\cF)$ of $(X,\cE)$, $\cstu(X,\cF)$ is nuclear. 
\end{enumerate}
\end{corollary}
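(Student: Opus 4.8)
The plan is to prove Corollary~\ref{CorPropA} by combining the reflection machinery of Proposition~\ref{PropONLiffPropA} with the three results already available for countably generated (equivalently, metrizable or $[0,\infty]$-valued metrizable) coarse spaces: the equivalence of property~A and nuclearity (Theorem~\ref{ThmNuclearPropA}), the equivalence of property~A and compactness of all ghosts (Theorem~\ref{ThmGhostProjCompPropA}), and the equivalence of property~A and ONL for metric spaces (cited from \cite{Sako2014}, extended to $[0,\infty]$-valued metrics via Lemma~\ref{L.reflection}). The architecture is to first establish all equivalences for countably generated coarse structures, and then transfer each equivalence to arbitrary u.l.f.\ coarse spaces using the fact that the relevant properties reflect and that the intersection of two clubs is a club.

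First I would record the equivalences among the ``every countably generated reduct'' statements \eqref{Item1.1}--\eqref{Item4.1}. For a single countably generated reduct $(X,\cF)$, Theorem~\ref{ThmNuclearPropA} gives \eqref{Item1.1}$\Leftrightarrow$\eqref{Item4.1}, Theorem~\ref{ThmGhostProjCompPropA} (applicable since $\cF$ is countably generated) gives \eqref{Item1.1}$\Leftrightarrow$\eqref{Item3.1}, and the metric case of property~A $\Leftrightarrow$ ONL together with the $[0,\infty]$-valued metric reformulation from Lemma~\ref{L.infty-valued-metric} gives \eqref{Item1.1}$\Leftrightarrow$\eqref{Item2.1}. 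Quantifying over all countably generated reducts preserves each biconditional, so \eqref{Item1.1}, \eqref{Item2.1}, \eqref{Item3.1}, \eqref{Item4.1} are mutually equivalent.

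Next I would connect the global properties \eqref{Item1}--\eqref{Item4} to their reduct counterparts. The key input is Proposition~\ref{PropONLiffPropA}, which asserts that property~A, ONL, and ``all ghosts are compact'' each reflect. By the definition of reflection, $(X,\cE)$ has property~A iff the family $\cC_A=\{\cF\subseteq\cE\mid (X,\langle\cF\rangle)\text{ has property A}\}$ contains a club; and similarly for ONL ($\cC_{\mathrm{ONL}}$) and for compactness of ghosts ($\cC_G$). Since these three properties agree on every countably generated coarse structure (by the metric equivalences recalled above), the three clubs witnessing reflection can be intersected: as the intersection of finitely many clubs is again a club, $\cC_A\cap\cC_{\mathrm{ONL}}\cap\cC_G$ contains a club, and on its members all three properties hold simultaneously. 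This yields \eqref{Item1}$\Leftrightarrow$\eqref{Item2}$\Leftrightarrow$\eqref{Item3}. For nuclearity \eqref{Item4}, I would invoke Theorem~\ref{ThmNuclearPropA} directly, which states the equivalence of property~A and nuclearity for \emph{all} u.l.f.\ coarse spaces, giving \eqref{Item1}$\Leftrightarrow$\eqref{Item4} without any reflection argument. Finally, the bridge between the global properties and the reduct properties is immediate from the meaning of reflection: for instance $(X,\cE)$ has property~A iff $\cC_A$ contains a club iff every sufficiently large (cofinally many) countably generated reduct has property~A, which, combined with weak monotonicity of the witnessing data, is equivalent to \emph{every} countably generated reduct having property~A, i.e.\ \eqref{Item1}$\Leftrightarrow$\eqref{Item1.1}.

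The main obstacle I anticipate is the bookkeeping in the last bridging step: reflection as defined only guarantees a club's worth of reduct verifications, whereas statements \eqref{Item1.1}--\eqref{Item4.1} quantify over \emph{all} countably generated reducts. The resolution is that ``having property A'' (and likewise ONL, compact ghosts) is monotone under enlarging a countably generated reduct inside $\cE$, so that membership in a cofinal $\sigma$-closed family upward-propagates: if a club of reducts has the property, then since every countably generated reduct is contained in a club member and the property passes to subreducts appropriately, every countably generated reduct must have it. I would spell out this monotonicity using the weak monotonicity of $F\mapsto F(E,m)$ already exploited in the proof of Proposition~\ref{PropONLiffPropA}, and for ghosts using Lemma~\ref{L.reflection} directly, which characterizes $\cstu(X,\cE)$ as the union over countably generated reducts and makes the presence of a noncompact ghost a reduct-local phenomenon. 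Once this monotonicity is in hand, all eight conditions collapse into a single equivalence class and the corollary follows.
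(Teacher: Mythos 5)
Your overall architecture coincides with the paper's: establish the equivalences \eqref{Item1}--\eqref{Item4} for countably generated structures (Sako's theorem, Theorem~\ref{ThmGhostProjCompPropA}), use Theorem~\ref{ThmNuclearPropA} directly for \eqref{Item1}$\Leftrightarrow$\eqref{Item4}, use the reflection statement of Proposition~\ref{PropONLiffPropA} to get \eqref{Item1}$\Leftrightarrow$\eqref{Item2}$\Leftrightarrow$\eqref{Item3}, and then bridge the global statements to the ``every countably generated reduct'' statements. However, your proposed mechanism for the bridge \eqref{Item1}$\Leftrightarrow$\eqref{Item1.1} has a genuine flaw. You claim that property~A passes down from a club member $\cF'$ to an arbitrary countably generated reduct $\cF\subseteq\cF'$ ``using the weak monotonicity of $F\mapsto F(E,m)$''. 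Weak monotonicity, as used in the proof of Proposition~\ref{PropONLiffPropA}, says that if $E'\subseteq E$ then $F(E,m)$ is a valid witness for the pair $(E',m)$: it allows shrinking the \emph{input} entourage, but says nothing about the \emph{output}. For $E\in\langle\cF\rangle$ the witness $F(E,m)$ lies in $\langle\cF'\rangle$, and there is no a priori reason that it, or any entourage containing the supports of the witnessing vectors $\xi_x$, belongs to the smaller structure $\langle\cF\rangle$. Downward inheritance of property~A (and of ONL) to reducts is not obvious; it is in fact a \emph{consequence} of the corollary you are proving, obtained via the ghost characterization, and cannot be fed into the proof as an ingredient justified by monotonicity of the witnessing data. (A smaller issue of the same kind: ``intersecting the three clubs witnessing reflection'' is circular as phrased, since those clubs exist only for properties already known to hold; what is true, and what the paper uses, is that $\cC_A$, $\cC_{\mathrm{ONL}}$, and the analogous family for ghosts have the same countable members, because the three properties agree on countably generated structures, hence one includes a club if and only if all do.)

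Fortunately your own proposal already contains the correct bridge, which is exactly the paper's: route it through \eqref{Item3}$\Leftrightarrow$\eqref{Item3.1} rather than \eqref{Item1}$\Leftrightarrow$\eqref{Item1.1}. Compactness of all ghosts passes to every reduct trivially, since $\cstu(X,\cF)\subseteq\cstu(X,\cE)$ by Proposition~\ref{PropInclusionCoarseStructure} and the notions of ghost and of compact operator do not depend on the coarse structure; this gives \eqref{Item3}$\Rightarrow$\eqref{Item3.1}. Conversely, \eqref{Item3.1}$\Rightarrow$\eqref{Item3} follows from Proposition~\ref{PropONLiffPropA}\eqref{Item.Ghost}, because the family of \emph{all} countable subsets of $\cE$ is a club. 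Combining this single bridge with your first step (the reduct-wise equivalences \eqref{Item1.1}$\Leftrightarrow$\eqref{Item2.1}$\Leftrightarrow$\eqref{Item3.1}$\Leftrightarrow$\eqref{Item4.1}) and your second step (the global equivalences \eqref{Item1}$\Leftrightarrow$\eqref{Item2}$\Leftrightarrow$\eqref{Item3}$\Leftrightarrow$\eqref{Item4}) closes the loop; the downward inheritance of property~A that you wanted then falls out as a corollary. As written, though, the weak-monotonicity step would fail, so the proof is only correct after this rerouting.
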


\begin{proof} 
\eqref{Item1} and \eqref{Item4} are equivalent by 
\cite[Theorem 5.5.7]{BrownOzawa} (see the paragraph preceding Theorem~\ref{ThmNuclearPropA}). 
If $\cE$ is countably generated, then 
 \eqref{Item1} and \eqref{Item2} are equivalent by \cite[Theorem 5.1]{Sako2014}, and \eqref{Item1} and \eqref{Item4} are equivalent by \cite{RoeWillett2014} (see the paragraph preceding Theorem~\ref{ThmGhostProjCompPropA}). By Proposition~\ref{PropONLiffPropA}, each of \eqref{Item1}, \eqref{Item2}, and \eqref{Item3} reflects to countably generated reducts, and \eqref{Item1}--\eqref{Item4} are therefore equivalent.

By Proposition~\ref{PropONLiffPropA}\eqref{Item.Ghost}, \eqref{Item3.1} implies \eqref{Item3}. On the other hand, the property `all ghost operators in $\cstu(X,\cE)$ are compact' is preserved by taking reducts, by Proposition~\ref{PropInclusionCoarseStructure}, hence \eqref{Item3} implies \eqref{Item3.1}, and so the two are equivalent.

 Since \eqref{Item1}--\eqref{Item4} are equivalent in the case when $\cE$ is countably generated, \eqref{Item1.1}--\eqref{Item4.1} are equivalent to the first four items as well. 
 \end{proof}

\begin{remark} \label{R.silly} 
 We have proved that the closing-off arguments used in the proof of Proposition~\ref{PropONLiffPropA} were not needed. This is because by Corollary~\ref{CorPropA} \eqref{Item3.1} all relevant properties reflect to all countably generated reducts. 
 \end{remark}

\section{The `rigid' embedding scenario}
\label{S.5} 

In this section, we obtain coarse embeddability under the assumption that the embedding $\cstu(X)\to \cstu(Y)$ onto a hereditary subalgebra  satisfies a rigidity condition (see Lemmas \ref{lemma:entourages} and \ref{lemma:coarse}). In the following section we present geometric properties which guarantee that such rigidity condition holds. We emphasize that Lemma \ref{L.Claim} contains the main ideas which allow generalization of the existing rigidity proofs to the not necessarily metrizable case.

As in \cite{SpakulaWillett2013AdvMath},  we start by stating a lemma giving us a better grasp on the structure of an isomorphism (or embedding) $\Phi\colon\cstu(X)\to \cstu(Y)$. The following is precisely \cite[Lemma 6.1]{BragaFarahVignati2019}, but see also  \cite[Lemma~3.1]{SpakulaWillett2013AdvMath} (once again we warn the reader that although metrizability is present in the statement of \cite[Lemma 6.1]{BragaFarahVignati2019}, metrizability is not used in its proof).
 
\begin{lemma}\label{LemmaPhiStronglyContAndU} 
Let $(X,\cE)$ and $(Y,\cE')$ be u.l.f. coarse spaces. Let $\Phi\colon \cstu(X)\to \cstu(Y)$ be an isomorphism onto a hereditary subalgebra of $\cstu(Y)$. 
Then there exists an isometry $U\colon \ell_2(X)\to \ell_2(Y)$ such that
\[
\Phi(a)=UaU^*
\]
for all $a\in \cstu(X)$. 
Also, $\Psi(a)=U^*a U$ defines a completely positive and contractive map from $\cstu(Y)$ onto $\cstu(X)$ such that $\Psi\circ \Phi=\id_{\cstu(X)}$.\qed
\end{lemma}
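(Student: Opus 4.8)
The plan is to realize $\Phi$ spatially by building $U$ from the images of the rank-one diagonal projections. First I would record that, since $\cstu(X)$ is unital, $p:=\Phi(1)$ is a projection in $\cstu(Y)$ and the image $\Phi(\cstu(X))$, being a unital hereditary subalgebra, is exactly $p\cstu(Y)p$. For each $x\in X$ the diagonal projection $e_{xx}\in\ell_\infty(X)\subseteq\cstu(X)$ is minimal, in the sense that $e_{xx}\cstu(X)e_{xx}=\C e_{xx}$; hence $q_x:=\Phi(e_{xx})$ satisfies $q_x\cstu(Y)q_x=q_x\,(p\cstu(Y)p)\,q_x=\Phi(e_{xx}\cstu(X)e_{xx})=\C q_x$, so $q_x$ is a minimal projection of $\cstu(Y)$.

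The first substantive point is that a minimal projection $q\in\cstu(Y)$ is automatically rank one in $\cB(\ell_2(Y))$. This uses only $\ell_\infty(Y)\subseteq\cstu(Y)$: for $s\in Y$ the compression $q\,e_{ss}\,q$ equals the rank-$\le 1$ operator $|q\delta_s\rangle\langle q\delta_s|$ and also lies in $\C q$ by minimality; if $q$ had rank $\ge 2$ this forces $q\delta_s=0$ for every $s$, whence $q=0$, a contradiction. So each $q_x$ has a one-dimensional range; choose a unit vector $\eta_x$ spanning it. Since $e_{xx}e_{x'x'}=0$ for $x\ne x'$, the $q_x$ are pairwise orthogonal and $\{\eta_x\}_{x\in X}$ is orthonormal, so $U\delta_x=\eta_x$ defines an isometry $\ell_2(X)\to\ell_2(Y)$.

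The two remaining tasks — fixing the phases of the $\eta_x$ and proving completeness — are where the work lies. For the phases, note that whenever $e_{xx'}\in\cstu(X)$ (equivalently $(x,x')\in\bigcup\cE$) it is a partial isometry between $e_{x'x'}$ and $e_{xx}$, so $\Phi(e_{xx'})$ is a partial isometry between $q_{x'}$ and $q_x$, i.e. $\Phi(e_{xx'})=c_{xx'}|\eta_x\rangle\langle\eta_{x'}|$ for a unimodular scalar $c_{xx'}$, and these satisfy the cocycle identity $c_{xx'}c_{x'x''}=c_{xx''}$. The relation ``$(x,x')\in\bigcup\cE$ or $x=x'$'' is an equivalence relation (symmetry from $E\mapsto E^{-1}$, transitivity from $E\circ F\in\cE$), so each class is a clique; fixing a base point $x_0$ in each class and replacing $\eta_x$ by $c_{xx_0}\eta_x$ trivializes the cocycle, yielding $\Phi(e_{xx'})=Ue_{xx'}U^*$ for all $x,x'$ in a common class. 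For completeness, i.e. $UU^*=p$, I would show $p\delta_s\in\overline{\spann}\{\eta_x\}=:H_0$ for every $s\in Y$: when $p\delta_s\ne 0$ the element $p\,e_{ss}\,p=|p\delta_s\rangle\langle p\delta_s|$ lies in $p\cstu(Y)p$, so it is $\Phi$ of a scaled minimal (hence rank-one) projection $|\mu\rangle\langle\mu|$ of $\cstu(X)$; because $\supp(\mu)$ is contained in a single equivalence class, $|\mu\rangle\langle\mu|$ is a norm-limit of finite combinations of matrix units $e_{xx'}$ from that class, and the phase-corrected identity propagates through the limit to give $p\,e_{ss}\,p=U|\mu\rangle\langle\mu|U^*$, forcing $p\delta_s\in H_0$. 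As the $p\delta_s$ span the range of $p$, this gives $UU^*=p$.

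Finally, with $UU^*=p$ and $U^*U=1$, comparing the compressions $q_x\Phi(a)q_{x'}=\Phi(e_{xx}ae_{x'x'})$ with $q_x(UaU^*)q_{x'}$ via the matrix-unit identity shows they agree for all $x,x'$; summing over $x,x'$ and using $p\Phi(a)p=\Phi(a)$ gives $\Phi(a)=UaU^*$ for every $a\in\cstu(X)$. The map $\Psi(b)=U^*bU$ is then completely positive and contractive (conjugation by the contraction $U^*$), lands in $\cstu(X)$ because $U^*bU=U^*(pbp)U=\Phi^{-1}(pbp)$, and satisfies $\Psi\circ\Phi=\id_{\cstu(X)}$ since $U^*U=1$. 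The main obstacle is the completeness step $UU^*=p$ together with the coherent choice of phases; the transitivity observation that equivalence classes are cliques is precisely what makes both manageable in the possibly non-metrizable, non-connected setting.
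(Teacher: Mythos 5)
Your proof is correct. It is worth noting what the paper actually does here: it gives no argument at all, quoting the result as \cite[Lemma 6.1]{BragaFarahVignati2019} (see also \cite[Lemma 3.1]{SpakulaWillett2013AdvMath}). The cited proof obtains $U$ by restricting $\Phi$ to the compact operators: in the metric (hence connected) case one has $\cK(\ell_2(X))\subseteq \cstu(X)$, hereditariness forces rank-one projections to go to rank-one projections, and the structure theory of representations of $\cK(\ell_2(X))$ then produces $U$ with all phases automatically coherent; what remains is to prove $UU^*=\Phi(1)$ and to extend $\Phi=\Ad U$ from the compacts to all of $\cstu(X)$. Your route replaces that machinery by an explicit construction: rank-one-ness of $\Phi(e_{xx})$ via the $\ell_\infty(Y)$-compression trick, a cocycle trivialization of phases along the connected components, the $pe_{ss}p$ argument for $UU^*=p$, and a strong-limit compression argument for the extension. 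The two crucial uses of hereditariness (minimality of the $\Phi(e_{xx})$, and completeness $UU^*=p$) are the same in both proofs, but your version has a genuine advantage in the stated generality: coarse spaces in this paper are not assumed connected, so $\cK(\ell_2(X))\not\subseteq\cstu(X)$ in general, and the cited proof needs precisely the component-by-component bookkeeping that your clique observation supplies explicitly. Your final step also quietly reproves the strong continuity statement that the cited argument invokes separately.

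One fact you use twice without proof — that $\supp(\mu)$ lies in a single component, and that $e_{xx}ae_{x'x'}=0$ when $x$ and $x'$ lie in different components — is the assertion that every $a\in\cstu(X)$ has vanishing matrix entries across distinct components. This is immediate (a finite-propagation operator has support in an entourage, every pair in an entourage lies within one component since $\cE$ is closed under subsets, and the property passes to norm limits entrywise), but it should be stated, since it is exactly the point where your equivalence relation interacts with the algebra $\cstu(X)$.
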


%

The following will be our standing assumption throughout \S\ref{S.5} and \S\ref{S.6}. 

\begin{assumption}\label{Assumption}
Let $(X,\cE)$ and $(Y,\cE')$ be u.l.f. coarse spaces. Let $\Phi\colon \cstu(X)\to \cstu(Y)$ be an isomorphism onto a hereditary subalgebra of $\cstu(Y)$, and let $U\colon\ell_2(X)\to \ell_2(Y)$ be the isometry such that $\Phi(a)=\Ad U$, as  guaranteed by Lemma~\ref{LemmaPhiStronglyContAndU}.  Also define $\Psi\colon \cstu(Y)\to \cstu(X)$ by    
\[
\Psi(b)=U^*bU, 
\]
In other words,   $\Psi(b)=\Phi^{-1}(\Phi(1)b\Phi(1))$ for all $b\in \cstu(Y)$, where $\Phi^{-1}$ is defined only on the range of $\Phi$.
\end{assumption}

Note that, besides $\Psi\circ\Phi=\mathrm{Id}_{\cstu(X)}$, we have $\Phi\circ\Psi(b)=\Phi(1)b\Phi(1)$ for all $b\in \cstu(Y)$ (if $\Phi$ is an actual isomorphism, then clearly $\Phi\circ\Psi=\mathrm{Id}_{\cstu(Y)}$). In particular, \[\|a\Psi(b) \|=\|\Phi(a)b\Phi(1)\|\ \text{ and }\ \|\Psi(b)a \|=\|\Phi(1)b\Phi(a)\|\] for all  $a\in \cstu(X)$ and all $b\in \cstu(Y)$.  These equalities will be used throughout these notes with no further mention.


\begin{lemma} \label{L.XyYx} In the setting of Assumption~\ref{Assumption}: 
for  $x\in X$, $y\in Y$, and $\delta>0$ let 
\begin{enumerate}
\item \label{EqYdelta}
$Y_{x,\delta,\Phi}= \Big\{y\in Y\mid \norm{\Phi(e_{xx})e_{yy}\Phi(1)}>\delta\Big\}$ and  
\item  \label{EqXdelta} 
$X_{y,\delta,\Psi} =\Big\{x\in X\mid \norm{\Psi( e_{yy} )e_{xx}}>\delta\Big\}
$.
\end{enumerate}
Then  $y\in Y_{x,\delta,\Phi}$ if and only if $x\in X_{y,\delta,\Psi}$. 
Moreover, both 
$\sup_{x\in X}|Y_{x,\delta,\Phi}|$ and $\sup_{y\in Y}|X_{y,\delta,\Psi}|$ are finite.  
\end{lemma}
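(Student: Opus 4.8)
The plan is to reduce everything to the rank-one structure of the operators involved, exploiting that $\Phi(a)=UaU^*$ and $\Psi(b)=U^*bU$. Writing $\eta_x=U\delta_x\in\ell_2(Y)$ and $\zeta_y=U^*\delta_y\in\ell_2(X)$, I would first record that $\Phi(e_{xx})=\ket{\eta_x}\bra{\eta_x}$, that $\Phi(1)=UU^*$, and that $\Psi(e_{yy})=\ket{\zeta_y}\bra{\zeta_y}$, where I abbreviate the rank-one operators in Dirac notation. Since $U$ is an isometry, $\norm{\eta_x}=1$, while $\norm{\zeta_y}^2=\norm{U^*\delta_y}^2=\langle UU^*\delta_y,\delta_y\rangle=\langle\Phi(1)\delta_y,\delta_y\rangle\le 1$; the fact that $\Phi$ maps only \emph{onto a hereditary subalgebra}, so that $\Phi(1)$ is a proper projection rather than the identity, is absorbed entirely into this harmless bound.

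For the equivalence $y\in Y_{x,\delta,\Phi}\iff x\in X_{y,\delta,\Psi}$, the cleanest route is to apply the norm identity $\norm{e_{xx}\Psi(e_{yy})}=\norm{\Phi(e_{xx})e_{yy}\Phi(1)}$ recorded immediately after Assumption~\ref{Assumption} (taking $a=e_{xx}$ and $b=e_{yy}$). Because $e_{xx}$ and $\Psi(e_{yy})=U^*e_{yy}U$ are both self-adjoint, passing to adjoints gives $\norm{e_{xx}\Psi(e_{yy})}=\norm{\Psi(e_{yy})e_{xx}}$, whence $\norm{\Phi(e_{xx})e_{yy}\Phi(1)}=\norm{\Psi(e_{yy})e_{xx}}$. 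The two defining inequalities for $Y_{x,\delta,\Phi}$ and $X_{y,\delta,\Psi}$ are then literally the same, which yields the biconditional.

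For the finiteness of the suprema I would compute the two quantities as norms of rank-one operators. A direct calculation gives $\Phi(e_{xx})e_{yy}\Phi(1)=\langle\eta_x,\delta_y\rangle\,\ket{\eta_x}\bra{\Phi(1)\delta_y}$, so that $\norm{\Phi(e_{xx})e_{yy}\Phi(1)}=\abs{\langle\eta_x,\delta_y\rangle}\,\norm{\Phi(1)\delta_y}\le\abs{\langle U\delta_x,\delta_y\rangle}$. Hence $y\in Y_{x,\delta,\Phi}$ forces $\abs{\langle U\delta_x,\delta_y\rangle}>\delta$, and since $\sum_{y\in Y}\abs{\langle U\delta_x,\delta_y\rangle}^2=\norm{U\delta_x}^2=1$ by Parseval, at most $\lfloor 1/\delta^2\rfloor$ such $y$ occur; this bound is uniform in $x$, giving $\sup_{x}\abs{Y_{x,\delta,\Phi}}\le 1/\delta^2$. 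Symmetrically, $\Psi(e_{yy})e_{xx}=\langle\zeta_y,\delta_x\rangle\,\ket{\zeta_y}\bra{\delta_x}$ has norm $\abs{\langle\zeta_y,\delta_x\rangle}\,\norm{\zeta_y}$, so $x\in X_{y,\delta,\Psi}$ forces $\abs{\langle\zeta_y,\delta_x\rangle}>\delta$, and $\sum_{x\in X}\abs{\langle\zeta_y,\delta_x\rangle}^2=\norm{\zeta_y}^2\le 1$ bounds the count by $1/\delta^2$ uniformly in $y$.

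I do not expect a genuine obstacle here: once the rank-one pictures of $\Phi(e_{xx})$, $\Phi(1)$, and $\Psi(e_{yy})$ are written down, the argument is mechanical. The only points demanding attention are keeping track that $\Phi(1)\ne 1$ (controlled by $\norm{\Phi(1)\delta_y}\le 1$ and $\norm{\zeta_y}\le 1$) and observing that the Parseval estimate delivers a bound \emph{independent of the base point}, which is precisely what the uniform suprema demand. It is worth remarking that this finiteness uses neither property A nor even the u.l.f.\ hypothesis, relying only on $\norm{U\delta_x}=1$ and $\norm{U^*\delta_y}\le 1$.
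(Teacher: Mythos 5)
Your proposal is correct and follows essentially the same route as the paper: the biconditional comes from the displayed norm identities following Assumption~\ref{Assumption} together with taking adjoints, and the uniform bound $\delta^{-2}$ on $|Y_{x,\delta,\Phi}|$ and $|X_{y,\delta,\Psi}|$ comes from the rank-one picture $\Phi(e_{xx})=Ue_{xx}U^*$, $\Psi(e_{yy})=U^*e_{yy}U$ and a Parseval estimate. The paper merely compresses the second step into the inequality $\norm{\Phi(e_{xx})\delta_y}\geq\norm{\Phi(e_{xx})e_{yy}\Phi(1)}>\delta$, which is your computation with the harmless factor $\norm{\Phi(1)\delta_y}\leq 1$ suppressed.
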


We will write $Y_{x,\delta}$ for $Y_{x,\delta,\Phi}$ and $X_{y,\delta}$ for $X_{y,\delta,\Psi}$ when $\Phi$ and $\Psi$ are clear from the context. 

\begin{proof}[Proof of Lemma \ref{L.XyYx}]
The first statement follows from the comments preceding the statement of this lemma.  For $x\in X$  and $y\in Y_{x,\delta}$ we have $\|\Phi(e_{xx})\delta_y\|\geq \|\Phi(e_{xx})e_{yy}\Phi(1)\|\geq \delta$, and therefore    $\sup_{x\in X} |Y_{x,\delta}|\leq \delta^{-2}$.  
An analogous argument shows that   $\sup_{y\in Y}|X_{y,\delta}|\leq \delta^{-2}$. 
\end{proof}

All the known proofs of (strong or weak) rigidity, starting with \cite{SpakulaWillett2013AdvMath}, proceed by analysis of the sets $Y_{x,\delta}$, along the following three steps.\footnote{A similar approach is used when dealing with strong or weak rigidity for embeddings from the existence of \cstar-embeddings onto hereditary subalgebras.}
\begin{enumerate}
\item\label{A.1} One finds $\delta>0$ such that $Y_{x,\delta}$ and $X_{y,\delta}$ are nonempty for all $x\in X$ and $y\in Y$.
\item \label{A.2} One proves that the sets $\bigcup_{x\in X}Y_{x,\delta}\times Y_{x,\delta}$ and $\bigcup_{y\in Y}X_{y,\delta}\times X_{y,\delta}$ are entourages (in $Y$ and $X$ respectively) for all $\delta>0$. (The metric version of this is \cite[Lemma 6.7]{WhiteWillett2017}).
\item \label{A.3} One uses \eqref{A.2} to show that, fixing $\delta>0$ satisfying \eqref{A.1}, any function $f\colon X\to Y$ such that $f(x)\in Y_{x,\delta}$ for all $x\in X$ is coarse and expanding.
\end{enumerate}
A proof of \eqref{A.1} typically uses a geometric assumption, such as the absence of noncompact ghost projections, or `rigidity' of the map $\Phi$ (see e.g., \cite[\S4]{BragaFarah2018}).
In the metrizable case, the proofs of \eqref{A.2} and \eqref{A.3} are obtained via a diagonalization argument, taking advantage of the fact that the coarse structure is countably generated. 
Our approach follows the steps \eqref{A.1}--\eqref{A.3}, but the absence of metrizability makes the proofs of \eqref{A.2} and \eqref{A.3} more difficult. We isolate a technical condition that guarantees the success of \eqref{A.2}~and~\eqref{A.3}. 
(Later, in \S\ref{section:geometry}, we will show how this technical condition follows from geometric assumptions on $X$ and $Y$.)

\begin{lemma} \label{L.Claim} In the setting of Assumption \ref{Assumption}: For all $\delta>0$ and $\eta>0$ there are finite partitions $\bigcup_{x\in X}Y_{x,\delta}\times Y_{x,\delta}=\bigsqcup_{i=0}^{n-1} B_i$, $X=\bigsqcup_{j=0}^{l-1} X_j$ and $Y=\bigsqcup_{\ell=0}^{k-1}Y_\ell$ such that for all $i$, $j$, and $\ell$ the following holds. 
\begin{enumerate}
 \item\label{1.Claim} All the horizontal and vertical sections of $B_i$ are at most singletons. 
\item \label{2.Claim} $(Y_{x,\delta}\times Y_{x,\delta})\cap B_i$ is at most a singleton for all $x$. 
	\item \label{3.Claim} If $x$ and $x'$ are distinct elements of $X_j$, then $Y_{x,\eta}$ and $Y_{x',\eta}$ are disjoint. 
	\item\label{4.Claim} If $y$ and $y'$ are distinct elements of $Y_\ell$, then $X_{y,\eta}$ and $X_{y',\eta}$ are disjoint.
\end{enumerate}

\end{lemma}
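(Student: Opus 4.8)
The plan is to recognize all four conclusions as instances of the bounded-degree colouring principle recalled in \S\ref{S.Graphs} (if every vertex of a graph has degree $<N$, the graph is $N$-colourable). The two inputs that make every relevant graph have finite degree are exactly the two outputs of Lemma~\ref{L.XyYx}: the symmetry $y\in Y_{x,\delta}\iff x\in X_{y,\delta}$, and the uniform cardinality bounds $|Y_{x,\delta}|\le \delta^{-2}$ and $|X_{y,\delta}|\le \delta^{-2}$ (and the same with $\eta$ in place of $\delta$). The point of the lemma is that these bounds let a single colouring replace the diagonalization used in the countably generated case; the partition of $\bigcup_x Y_{x,\delta}\times Y_{x,\delta}$ is governed by $\delta$, while the partitions of $X$ and $Y$ are governed by $\eta$, and the three are constructed independently.

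For \eqref{3.Claim} I would form the graph $G$ with vertex set $X$ in which distinct $x,x'$ are adjacent exactly when $Y_{x,\eta}\cap Y_{x',\eta}\neq\emptyset$. If $x'$ is a neighbour of $x$ then $x,x'$ share some $y\in Y_{x,\eta}$, which by symmetry means $x'\in X_{y,\eta}$; hence the neighbours of $x$ lie in $\bigcup_{y\in Y_{x,\eta}}X_{y,\eta}$, a set of size at most $|Y_{x,\eta}|\cdot\eta^{-2}\le\eta^{-4}$. Thus $G$ has degree bounded by $\eta^{-4}$ and is $N$-colourable for $N=\lceil\eta^{-4}\rceil+1$; the colour classes serve as $X_0,\dots,X_{l-1}$ and give precisely \eqref{3.Claim}. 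Item~\eqref{4.Claim} is proved identically after interchanging the roles of $X$ and $Y$, using that the neighbours of $y$ in the analogous graph on $Y$ lie in $\bigcup_{x\in X_{y,\eta}}Y_{x,\eta}$.

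For \eqref{1.Claim} and \eqref{2.Claim} I would set $S=\bigcup_{x\in X}Y_{x,\delta}\times Y_{x,\delta}$ and build one conflict graph $H$ on vertex set $S$, declaring two distinct pairs adjacent if either (Type~A) they share their first coordinate or share their second coordinate, or (Type~B) they both lie in a common block $Y_{x,\delta}\times Y_{x,\delta}$. An independent set in $H$ is exactly a subset of $S$ all of whose horizontal and vertical sections are singletons and which meets every block in at most one point, i.e. a set satisfying \eqref{1.Claim} and \eqref{2.Claim}; so it remains to bound $\deg_H$. Fix $p=(y,y')\in S$. The pairs of $S$ sharing the first coordinate $y$ have second coordinate in $\bigcup_{x\in X_{y,\delta}}Y_{x,\delta}$, of size at most $\delta^{-4}$, and symmetrically for the second coordinate, so $p$ has at most $2\delta^{-4}$ Type~A neighbours. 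For Type~B, $p$ belongs only to the blocks indexed by $x\in X_{y,\delta}\cap X_{y',\delta}$, of which there are at most $\delta^{-2}$, each of cardinality at most $\delta^{-4}$, giving at most $\delta^{-6}$ Type~B neighbours. Hence $\deg_H(p)\le 2\delta^{-4}+\delta^{-6}$ for all $p$, $H$ is colourable with finitely many colours, and its colour classes are the desired $B_0,\dots,B_{n-1}$.

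The only genuine work is the degree counting, and the step to watch is the Type~B bound: since a single pair $p$ can belong to many blocks, one must observe that the blocks through $p$ are indexed by $X_{y,\delta}\cap X_{y',\delta}$ and invoke the finiteness of $X_{y,\delta}$ to keep the count finite — this is where the uniform bound from Lemma~\ref{L.XyYx} is essential. Once the degrees are seen to be finite, each part is immediate from the colouring principle, and no appeal to countable generation (and thus no diagonalization) is made, which is exactly the feature that allows the argument to run in the nonmetrizable setting.
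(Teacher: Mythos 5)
Your proposal is correct and follows essentially the same route as the paper: both reduce all four items to the bounded-degree colouring principle of \S\ref{S.Graphs}, with the degree bounds supplied by the symmetry and cardinality estimates of Lemma~\ref{L.XyYx}, and with identical graphs on $X$ and $Y$ for items \eqref{3.Claim} and \eqref{4.Claim}. The only (cosmetic) difference is that you merge the two conflict conditions for \eqref{1.Claim} and \eqref{2.Claim} into a single graph on $\bigcup_{x\in X}Y_{x,\delta}\times Y_{x,\delta}$, whereas the paper colours two separate graphs and passes to the joint refinement of the resulting partitions.
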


\begin{proof} We will use the fact that a graph in which the degrees of vertices are uniformly bounded is finitely chromatic (\S\ref{S.Graphs}). 
Let 
\[
A_\delta=\bigcup_{x\in X}Y_{x,\delta}\times Y_{x,\delta}.
\] 
Lemma~\ref{L.XyYx} implies that 
\[
\textstyle
n=\max\{\sup_{y\in Y} |X_{y,\delta}|, \sup_{x\in X} |Y_{x,\delta}|\}<\infty.
\]
For $y\in Y$ we have $\{y'\in Y\mid (y,y')\in A_\delta\}\subseteq \bigcup\{Y_{x,\delta}\mid x\in X_{y,\delta}\}$
and the right-hand side has cardinality at most $n^2$. 
Therefore the cardinalities of vertical and horizontal sections of $A_\delta$ are bounded by $n^2$. 
Define a graph with $A_\delta$ as the vertex-set by declaring that $(y,y')$ and $(y'',y''')$ are adjacent if and only if $y=y''$ or $y'=y'''$. 
The degrees of the vertices of this this graph are uniformly bounded, and therefore $A_\delta$ can be partitioned into finitely many pieces such that each of the pieces satisfies \eqref{1.Claim}.

To assure \eqref{2.Claim}, 
define a graph with $A_\delta$ as the vertex-set by declaring that the vertices $(y,y')$ and $(y'', y'')$ are adjacent if and only if they both belong to $Y_{x,\delta}\times Y_{x,\delta}$ for some $x$. By Lemma~\ref{L.XyYx}, the set of possible $x$ is included in $X_{y,\delta}$ and therefore of cardinality at most $n$. As for every $x\in X$ we have $|Y_{x,\delta}\times Y_{x,\delta}|\leq n^2$, the degrees of the vertices of this graph are uniformly bounded by $n^3$ and $A_\delta$ can be partitioned into finitely many pieces each of which satisfies \eqref{2.Claim}. The pieces of the joint refinement of these two partitions, $A_\delta=\bigcup_{i=0}^{k-1} B_i$, satisfy \eqref{1.Claim} and \eqref{2.Claim}. 

We now want to find a partition of $X$ satisfying \eqref{3.Claim}. Define a graph on $X$ by making $x$ and $x'$ adjacent if $Y_{x,\eta}\cap Y_{x',\eta}\neq\emptyset$. All vertices adjacent to a fixed $x$ belong to the set $\bigcup\{X_{y,\eta}\mid y\in Y_{x,\eta}\}$. Therefore Lemma \ref{L.XyYx} implies that this graph has uniformly bounded degree, and we can find a colouring $X=\bigsqcup_{i=0}^{l-1} X_{i}$ where each $X_{i}$ is monochromatic. Clearly, the pieces of this partition satisfy \eqref{3.Claim}. A partition of $Y$ satisfying \eqref{4.Claim} is obtained in the same exact way.
\end{proof}

The following auxiliary lemma can be compared with \cite[Lemma~6.5]{BragaFarahVignati2018}.


\begin{lemma}\label{LemmaRank1Inequality}
If $b,c$, and $v$ are rank 1 operators on a Hilbert space, then 
\[
\|v\|\|bvc\|= \|bv\|\|vc\|.
\] 
\end{lemma}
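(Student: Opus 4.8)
The plan is to reduce everything to an explicit computation with rank-one operators written in ket--bra form. Recall that any rank-one operator on a Hilbert space $H$ has the form $\theta_{\xi,\eta}$, where $\theta_{\xi,\eta}(\zeta)=\langle\zeta,\eta\rangle\xi$ for $\xi,\eta\in H$, and that $\norm{\theta_{\xi,\eta}}=\norm\xi\norm\eta$. The degenerate case $v=0$ makes both sides vanish, so I would assume $v\ne 0$ throughout.

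The main simplification I would make first is to normalise $v$. Writing $v=\norm v\,w$ with $w=\theta_{\mu,\nu}$ a rank-one \emph{partial isometry}, i.e.\ $\norm\mu=\norm\nu=1$, each of the three products scales homogeneously: $bv=\norm v\,bw$, $vc=\norm v\,wc$, and $bvc=\norm v\,bwc$. Substituting into the claimed identity and cancelling the common factor $\norm v^{\,2}$ reduces the lemma to the single equality $\norm{bwc}=\norm{bw}\norm{wc}$ for a rank-one partial isometry $w$.

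Next I would compute the three operators explicitly. Using the composition rules $b\,\theta_{\mu,\nu}=\theta_{b\mu,\nu}$ and $\theta_{\mu,\nu}\,c=\theta_{\mu,c^*\nu}$, one gets $bw=\theta_{b\mu,\nu}$, $wc=\theta_{\mu,c^*\nu}$, and $bwc=\theta_{b\mu,c^*\nu}$. Taking norms via $\norm{\theta_{\xi,\eta}}=\norm\xi\norm\eta$ and using $\norm\mu=\norm\nu=1$ yields $\norm{bw}=\norm{b\mu}$, $\norm{wc}=\norm{c^*\nu}$, and $\norm{bwc}=\norm{b\mu}\norm{c^*\nu}$, from which $\norm{bwc}=\norm{bw}\norm{wc}$ is immediate.

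There is no real obstacle here; the content is entirely the observation that composing a rank-one operator on either side preserves rank one and only modifies a single vector in its ket--bra representation, so the norms multiply. The only points requiring a word of care are the degenerate case $v=0$ and the fact that the operator norm of $\theta_{\xi,\eta}$ equals $\norm\xi\norm\eta$ (attained at the unit vector $\eta/\norm\eta$), both of which are routine. An alternative, equivalent route would be to parametrise all of $b,v,c$ at once in ket--bra form and verify directly that $\norm v\norm{bvc}$ and $\norm{bv}\norm{vc}$ expand to the same product of vector norms and inner-product moduli; this avoids the normalisation step at the cost of slightly heavier bookkeeping.
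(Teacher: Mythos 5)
Your proof is correct, and after the shared opening move it follows a genuinely different route from the paper's. Both arguments begin by normalising $v$ to a rank-one partial isometry; from that point the paper stays representation-free: it uses $v=vv^*v$ and the \cstar-identity to show $\norm{vc}=\norm{v^*vc}$, introduces the source projection $d^*d\norm{d}^{-1}$ of a rank-one operator $d$, and evaluates the rank-one operator $bvc$ at a unit vector $\xi$ at which $\norm{vc}$ is attained, producing the chain of equalities $\norm{bvc}=\norm{bv\eta}=\norm{bv}\norm{\eta}=\norm{bv}\norm{vc}$ with $\eta=v^*vc\xi$. You instead pick an explicit ket--bra representation $w=\theta_{\mu,\nu}$ of the normalised operator and reduce everything to the composition rules $b\theta_{\mu,\nu}=\theta_{b\mu,\nu}$, $\theta_{\mu,\nu}c=\theta_{\mu,c^*\nu}$ together with the norm formula $\norm{\theta_{\xi,\eta}}=\norm{\xi}\norm{\eta}$. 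Your computation is shorter and more transparent: it makes visible that the rank-one hypothesis is needed only for $v$, with $b$ and $c$ arbitrary bounded operators (this is equally true of the paper's argument, but hidden there), and it bypasses the source-projection bookkeeping entirely --- incidentally, the paper's assertion that $bv$, $vc$ and $bvc$ all ``have the same source projection'' is accurate, and needed, only for the pair $vc$ and $bvc$, since the source projection of $bv$ is that of $v$ itself, as your formulas show instantly. What the paper's version buys is that it never chooses coordinates and proceeds purely by \cstar-algebraic identities, in keeping with the style of the surrounding lemmas; the price is a longer chain of equalities and the auxiliary fact about where a rank-one operator attains its norm.
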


\begin{proof} 
By replacing $v$ with $v/\|v\|$, we may assume that it is a partial isometry, hence $v=vv^*v$. We then have 
\[
\|vc\|^2=\|c^*v^*vc\|=\|c^*v^*vv^*vc\|=\|v^*vc\|^2,
\]
 hence  $\|vc\|=\|v^*vc\|$. 
 If $d$ is a nonzero rank 1 operator, then the source projection of $d$, $p=d^*d\|d\|^{-1} $, is of rank 1 and  $\|d\|=\|d\xi\|$, where $\xi$ is any unit vector in the range of $p$. 

We may assume that $bv\neq 0$ and $vc\neq 0$. Therefore each one of $bv$, $vc$, and $bvc$ has rank 1, and these three operators have the same source projection. Let $\xi$ be a unit vector such that $\|vc\|=\|vc\xi\|$. 
Then $\|vc\|=\|v^*vc\xi\|$. 
The vector $\eta=v^*vc\xi$ satisfies $v^*v\eta=\eta$, and  
\[
\|bvc\|=\|bvv^*vc\|=\|bvv^*vc\xi\|=\|bvv^* \eta\|=\|bv\|\|\eta\|=\|bv\|\|vc\|. 
\]
This proves the first equality, and the second follows by $\|vc\|=\|v^*vc\|$ and 
$\|bv\|=\|bvv^*\|$ proved in the analogous way. 
	\end{proof}

We are now ready to prove the main results of this section (the further hypotheses assumed in Lemmas \ref{lemma:entourages} and \ref{lemma:coarse} below are what we called ``rigidity conditions'' in the introduction).

\begin{lemma}\label{lemma:entourages}
In the setting of Assumption \ref{Assumption}: Further suppose that for all $\eps>0$ there is $\eta>0$ such that for all $y\in Y$ we have that
\[
\norm{(1-\chi_{X_{y,\eta}})\Psi(e_{yy})}<\eps.
\]
Then for all $\delta>0$, the set $A_\delta=\bigcup_{x\in X}(Y_{x,\delta}\times Y_{x,\delta})$ is an entourage.
\end{lemma}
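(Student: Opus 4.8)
The plan is to reduce the statement to Lemma~\ref{lemma:struct}: to see that a subset of $Y\times Y$ lies in $\cE'$ it suffices to exhibit $a\in\cstu(Y)$ and $\eps_0>0$ with the subset contained in $\{(y,y')\mid \norm{e_{y'y'}ae_{yy}}>\eps_0\}$, since the latter set belongs to $\cE'$ and $\cE'$ is closed under subsets and finite unions. Fix $\delta>0$. The operators I will use as witnesses are the images $\Phi(\chi_{X_j})$ of the diagonal projections onto the pieces of a suitable partition $X=\bigsqcup_{j<l}X_j$; these lie in $\cstu(Y)$ because $\chi_{X_j}\in\ell_\infty(X)\subseteq\cstu(X)$. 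Setting $A^{(j)}_\delta=\bigcup_{x\in X_j}(Y_{x,\delta}\times Y_{x,\delta})$, so that $A_\delta=\bigcup_{j<l}A^{(j)}_\delta$, it is enough to show each $A^{(j)}_\delta$ is an entourage, and I will do this with $a=\Phi(\chi_{X_j})$.

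With $U$ as in Assumption~\ref{Assumption} and $\zeta_x=U\delta_x$, each $\Phi(e_{xx})=\zeta_x\zeta_x^*$ is the rank-one projection onto $\zeta_x$, whence $\Phi(\chi_{X_j})=\sum_{x\in X_j}\zeta_x\zeta_x^*$ and $\langle\Phi(\chi_{X_j})\delta_y,\delta_{y'}\rangle=\sum_{x\in X_j}\overline{\zeta_x(y)}\,\zeta_x(y')$, where $\zeta_x(y)=\langle U\delta_x,\delta_y\rangle$. A direct rank-one computation gives $\norm{\Phi(e_{xx})e_{yy}\Phi(1)}=|\zeta_x(y)|\cdot\norm{\Phi(1)\delta_y}$, so that $y\in Y_{x,\delta}$ forces both $|\zeta_x(y)|>\delta$ and $\norm{\Phi(1)\delta_y}>\delta$ (using $|\zeta_x(y)|\le 1$). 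The decisive point is to choose the partition so that this diagonal sum has a single \emph{dominant} summand. Given $\eps>0$, let $\eta>0$ be as in the hypothesis, shrunk so that $\eta\le\delta$ (harmless, since $\norm{(1-\chi_{X_{y,\eta}})\Psi(e_{yy})}$ decreases as $\eta$ decreases, the set $X_{y,\eta}$ growing); then apply Lemma~\ref{L.Claim} with these $\delta,\eta$ to obtain a partition $X=\bigsqcup_{j<l}X_j$ satisfying \eqref{3.Claim}, i.e. the sets $Y_{x,\eta}$, $x\in X_j$, are pairwise disjoint.

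Now fix $(y,y')\in A^{(j)}_\delta$ and pick $x_0\in X_j$ with $y,y'\in Y_{x_0,\delta}$. Since $Y_{x,\delta}\subseteq Y_{x,\eta}$ and the $Y_{x,\eta}$ $(x\in X_j)$ are disjoint, $x_0$ is the unique such index and moreover $y\notin Y_{x,\eta}$ for every other $x\in X_j$; equivalently $X_j\setminus\{x_0\}\subseteq X\setminus X_{y,\eta}$. The $x_0$-summand has modulus $|\zeta_{x_0}(y)|\,|\zeta_{x_0}(y')|>\delta^2$. For the remainder $R=\sum_{x\in X_j\setminus\{x_0\}}\overline{\zeta_x(y)}\zeta_x(y')$, Cauchy--Schwarz together with $\sum_x|\zeta_x(y')|^2=\norm{\Phi(1)\delta_{y'}}^2\le 1$ gives $|R|\le\big(\sum_{x\notin X_{y,\eta}}|\zeta_x(y)|^2\big)^{1/2}$. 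Here the passage to $\Psi$ is what makes the hypothesis bite: since $\Psi(e_{yy})=\xi_y\xi_y^*$ with $\xi_y=U^*\delta_y$ and $|\xi_y(x)|=|\zeta_x(y)|$, that tail equals $\norm{(1-\chi_{X_{y,\eta}})\xi_y}$, and the hypothesis combined with $\norm{\xi_y}=\norm{\Phi(1)\delta_y}>\delta$ yields $\norm{(1-\chi_{X_{y,\eta}})\xi_y}<\eps/\delta$. Hence $|R|<\eps/\delta$, so choosing $\eps=\delta^3/4$ at the outset gives $|\langle\Phi(\chi_{X_j})\delta_y,\delta_{y'}\rangle|>\delta^2-\delta^2/4>\delta^2/2$. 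Therefore $A^{(j)}_\delta\subseteq\{(y,y')\mid\norm{e_{y'y'}\Phi(\chi_{X_j})e_{yy}}>\delta^2/2\}$, an entourage by Lemma~\ref{lemma:struct}; taking the union over $j<l$ completes the proof.

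The main obstacle is exactly the control of cancellation in $\sum_x\overline{\zeta_x(y)}\zeta_x(y')$: one cannot simply use $\Phi(1)=\sum_x\zeta_x\zeta_x^*$, since the finitely many indices $x$ with $y\in Y_{x,\eta}$ may contribute comparable, possibly cancelling, terms. The colouring in Lemma~\ref{L.Claim}\eqref{3.Claim} is precisely what isolates a single such index per point on each piece $X_j$, leaving everything else a genuine tail that the rigidity hypothesis forces to be small. Note that the order of quantifiers ($\eps$ first, then $\eta$, then the partition) must be respected, so that $\eta$, and hence the partition, is chosen uniformly in $y$.
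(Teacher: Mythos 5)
Your proof is correct and follows essentially the same route as the paper's: the same reduction to Lemma~\ref{lemma:struct} with the witness operators $\Phi(\chi_{X_j})$, the same partition from Lemma~\ref{L.Claim}\eqref{3.Claim} applied with $\eta\le\delta$, and the same ``unique dominant term plus small tail'' estimate, with the tail controlled by the rigidity hypothesis. The only difference is one of implementation: where the paper invokes Lemma~\ref{LemmaRank1Inequality} together with the identity $\norm{\Phi(a)b\Phi(1)}=\norm{a\Psi(b)}$, you unpack $\Phi$ and $\Psi$ into matrix coefficients of the isometry $U$ (via $\zeta_x=U\delta_x$, $\xi_y=U^*\delta_y$) and use Cauchy--Schwarz, which amounts to the same computation with slightly different constants.
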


\begin{proof}
Fix $\delta>0$ and pick $\eta\in (0,\delta)$  such that $\norm{(1-\chi_{X_{y,\eta}})\Psi(e_{yy})}<\frac{\delta^2}{2}$ for all $y\in Y$. Applying Lemma~\ref{L.Claim} to $\delta$ and $\eta$, we can find partitions $A_\delta=\bigsqcup_{i=0}^{k-1} B_i$ and $X=\bigsqcup_{j=0}^{l-1} X_j$ such that all horizontal and vertical sections of each $B_i$ are at most singletons, $(Y_{x,\delta}\cap Y_{x,\delta}) \cap B_i$ is at most a singleton for all $x$, and if $x$ and $x'$ are distinct elements of $X_j$ then $Y_{x,\eta}\cap Y_{x',\eta}=\emptyset$. 

For $i<k$ and $j<l$ let 
\[
F_{i,j}=B_i\cap \bigcup_{x\in X_j} (Y_{x,\delta}\times Y_{x,\delta}). 
\]
 Since $A_\delta=\bigsqcup_{i=0}^{k-1}\bigsqcup_{j=0}^{l-1} F_{i,j}$, it suffices to show that each $F_{i,j}$ is an entourage. 
 
Fix $i$ and $j$.   Let $(y,y')\in F_{i,j}$ and let $x\in X_j$ be such that $y\in Y_{x,\delta}$ and $y'\in Y_{x,\delta}$. Such an $x$ is unique:   by condition \eqref{3.Claim} of Lemma~\ref{L.Claim}, $Y_{x,\eta}$ and $Y_{x',\eta}$ are disjoint for distinct $x,x'\in X_j$. As $\eta<\delta$, we have that  $ Y_{x,\delta}\cap Y_{x',\delta}=\emptyset$ for all distinct $x,x'\in X_j$.

 As $y$ and $y'$ both belong to $Y_{x,\delta}$,  Lemma~\ref{LemmaRank1Inequality} applied to $v=\Phi( e_{xx})$, $b=e_{y'y'}$, and $c=e_{yy}\Phi(1)$ gives that
\[
\norm{e_{y'y'}\Phi( e_{xx}) e_{yy}\Phi(1)}=\| e_{y'y'}\Phi( e_{xx})\|\| \Phi( e_{xx}) e_{yy}\Phi(1)\|
\geq\delta^2,
\]
hence 
\[
\norm{e_{y'y'} \Phi( \chi_{X_j}) e_{yy}\Phi(1)}\geq\delta^2 -\norm{e_{y'y'}\Phi(\chi_{X_j\setminus \{x\}}) e_{yy}\Phi(1)}.
\]
Since $y\in Y_{x,\delta}$, we have $y\notin Y_{x',\eta}$ for all $x'\in X_j\setminus \{x\}$, and therefore $X_j\setminus \{x\}\subseteq X\setminus X_{\eta,y}$. Hence 
$\chi_{X_j\setminus\{x\}}=\chi_{X_j\setminus\{x\}}(1-\chi_{X_{y,\eta}})$.
By  the choice of~$\eta$, this implies that 
\[
\norm{e_{y'y'} \Phi(\chi_{X_j\setminus \{x\}} ) e_{yy}\Phi(1)}\leq  \norm{(1-\chi_{X_{y,\eta}})\Psi(e_{yy})}<\frac{\delta^2}2, 
\]
and we conclude that 
$
\norm{e_{y'y'} \Phi( \chi_{X_j}) e_{yy}\Phi(1)}\geq\frac{\delta^2}{2}$.
Therefore 
\[
F_{i,j}\subseteq\{(y,y')\in Y^2\mid \norm{e_{y'y'} \Phi(\chi_{X_j} )e_{yy}}>\eps\}.
\] 
By Lemma~\ref{lemma:struct}, this concludes the proof.
\end{proof}

\begin{lemma}\label{lemma:coarse}
In the setting of Assumption \ref{Assumption}: Further suppose that for all $\eps>0$ there is $\eta>0$ such that for all $y\in Y$ and all $x\in X$ we have that
\[ \norm{(1-\chi_{X_{y,\eta}})\Psi( e_{yy})}<\eps
\] 
and
\[
\norm{(1-\chi_{Y_{x,\eta}})\Phi(e_{xx} )}<\eps
\]
Let $\delta>0$ and let $f\colon X\to Y$ be a function such that $f(x)\in Y_{x,\delta}$ for all $x\in X$. Then $f$ is coarse and expanding.
\end{lemma}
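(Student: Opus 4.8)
The plan is to show that $f$ is both coarse and expanding by controlling, for each entourage we must verify, the ``spread'' of the sets $Y_{x,\delta}$ using the partition machinery of Lemma~\ref{L.Claim} together with the entourage conclusion of Lemma~\ref{lemma:entourages}. Note first that the hypotheses of the present lemma are symmetric in $X$ and $Y$ (they include both the $\Psi(e_{yy})$ estimate and the $\Phi(e_{xx})$ estimate), so Lemma~\ref{lemma:entourages} applies on both sides: the sets $A_\delta = \bigcup_{x\in X}(Y_{x,\delta}\times Y_{x,\delta})$ and, by the symmetric argument, $\bigcup_{y\in Y}(X_{y,\delta}\times X_{y,\delta})$, are entourages in $\cE'$ and $\cE$ respectively, for every $\delta>0$. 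These two facts are the workhorses.

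To prove $f$ is \emph{coarse}, I would fix $E\in\cE$ and seek to show that $\{(f(x),f(x'))\mid (x,x')\in E\}\in\cE'$. The idea is that if $(x,x')\in E$ then $e_{x'x'}v_E e_{xx}$ has norm $1$, so $\Phi(e_{x'x'})\Phi(v_E)\Phi(e_{xx})$ is a norm-one (modulo $\Phi(1)$) object; one then extracts, using the estimates $\norm{(1-\chi_{Y_{x,\eta}})\Phi(e_{xx})}<\eps$ and its primed counterpart, that $f(x)\in Y_{x,\delta}$ and $f(x')\in Y_{x',\delta}$ must be joined by the support of $\Phi(v_E)$, which lies in $\cE'$ since $\Phi(v_E)\in\cstu(Y)$. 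More precisely, $f(x)$ and $f(x')$ are within an $A_\delta$-step of points in $\supp(\Phi(v_E))$, and since $A_\delta\in\cE'$ and $\supp(\Phi(v_E))\in\cE'$, composing these three entourages (using that $\cE'$ is closed under $\circ$) places $(f(x),f(x'))$ in a fixed entourage depending only on $E$. This is the direction where Lemma~\ref{lemma:struct} and the boundedness of the $Y_{x,\delta}$ from Lemma~\ref{L.XyYx} get combined.

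To prove $f$ is \emph{expanding}, I would fix $E'\in\cE'$ and show $\{(x,x')\mid (f(x),f(x'))\in E'\}\in\cE$. Here one runs the dual argument through $\Psi$: if $f(x)\in Y_{x,\delta}$ then by the equivalence in Lemma~\ref{L.XyYx} we have $x\in X_{f(x),\delta}$, so $x$ lies in $X_{y,\delta}$ for $y=f(x)$. When $(f(x),f(x'))\in E'$, the points $f(x),f(x')$ are $E'$-close in $Y$, and the operator $v_{E'}$ ``transports'' the information back: one shows $(x,x')$ lies in a composition of the entourage $\bigcup_y(X_{y,\delta}\times X_{y,\delta})$ (which is in $\cE$ by the symmetric application of Lemma~\ref{lemma:entourages}) with $\supp(\Psi(v_{E'}))\in\cE$. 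Again closure of $\cE$ under composition and subsets finishes this direction.

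I expect the main obstacle to be the bookkeeping in the coarse direction: carefully converting the operator-norm inequalities (that $f(x)\in Y_{x,\delta}$ carries definite norm mass, and that $\Phi(v_E)$ links the relevant rank-one corners) into a genuine membership in an entourage of $\cE'$, while ensuring the chosen $\delta$ and $\eta$ (supplied by the hypothesis applied to a suitable $\eps$) are uniform over all $(x,x')\in E$. The technical heart is the same rank-one norm identity of Lemma~\ref{LemmaRank1Inequality} used to factor $\norm{e_{f(x')f(x')}\Phi(v_E)e_{f(x)f(x)}\Phi(1)}$, together with the truncation estimates to discard the off-diagonal contributions $1-\chi_{Y_{x,\eta}}$; keeping the error terms below $\delta^2/2$ uniformly is where care is needed. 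Once both directions are set up symmetrically, the proof reduces to three applications of closure of the coarse structures under composition.
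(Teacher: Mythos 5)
Your central mechanism, in both directions, rests on the claim that $\supp(\Phi(v_E))$ lies in $\cE'$ ``since $\Phi(v_E)\in\cstu(Y)$'' (and dually that $\supp(\Psi(v_{E'}))\in\cE$). That claim is false: an element of a uniform Roe algebra is only a \emph{norm limit} of finite-propagation operators, and its support need not be an entourage. For instance, when $\cE'$ is connected every compact operator belongs to $\cstu(Y)$, and a rank-one operator $\xi\eta^*$ with everywhere-supported $\xi,\eta$ has support $Y\times Y$, which is not an entourage of a u.l.f.\ structure on an infinite set. The only valid substitute is Lemma~\ref{lemma:struct}: for each \emph{fixed} $\eps>0$ the set $\{(y,y')\mid \norm{e_{y'y'}\Phi(v_E)e_{yy}}>\eps\}$ is an entourage. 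Consequently, any correct proof must produce a uniform lower bound on the matrix entries of (a piece of) $\Phi(v_E)$ at the pairs $(f(x),f(x'))$, uniformly over $(x,x')\in E$ --- precisely the step you set aside at the end as ``bookkeeping''. It is not bookkeeping; it is the entire content of the lemma. The obstruction is cancellation: $\Phi(v_E)=\sum_{(w,w')\in E}\Phi(e_{ww'})$ converges only strongly, so the entry at $(f(x),f(x'))$ is not controlled by the single term $\Phi(e_{xx'})$ unless the other terms are suppressed. The paper suppresses them by first splitting $E$ into partial bijections (Lemma~\ref{lemma:splitting}) and then, crucially, partitioning $X=\bigsqcup_j X_j$ via Lemma~\ref{L.Claim}\eqref{3.Claim} with the $\eta$ supplied by the hypothesis, and working with $E_j=E\cap(X_j\times X)$: within one piece the sets $Y_{x,\eta}$ are pairwise disjoint, which yields the identity $v_j-e_{xx'}=(v_j-e_{xx'})(1-\chi_{X_{f(x),\eta}})$ and hence the error bound $\norm{e_{f(x')f(x')}\Phi(v_j-e_{xx'})e_{f(x)f(x)}\Phi(1)}\le\norm{(v_j-e_{xx'})(1-\chi_{X_{f(x),\eta}})\Psi(e_{f(x)f(x)})}<\delta^2/2$, while Lemma~\ref{LemmaRank1Inequality} gives the main term $\norm{e_{f(x')f(x')}\Phi(e_{xx'})e_{f(x)f(x)}\Phi(1)}\ge\delta^2$; Lemma~\ref{lemma:struct} applied to $\Phi(v_j)$ then finishes. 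Your sketch never assembles this anti-cancellation device, so there is a genuine gap, not a missing routine detail.

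Two further points. First, your appeal to Lemma~\ref{lemma:entourages} ``on both sides'' is not licensed as stated: Assumption~\ref{Assumption} is asymmetric, since $\Psi=U^*(\cdot)U$ is in general neither injective nor multiplicative, hence not an isomorphism onto a hereditary subalgebra of $\cstu(X)$. The mirror conclusion $\bigcup_{y\in Y}(X_{y,\delta}\times X_{y,\delta})\in\cE$ is in fact true under your hypotheses, but it must be obtained by rerunning the proof of Lemma~\ref{lemma:entourages} with $\Psi$ in place of $\Phi$ (using that $\Psi(e_{yy})$ has rank at most one, that $\Psi$ maps into $\cstu(X)$, and Lemma~\ref{L.Claim}\eqref{4.Claim} in place of \eqref{3.Claim}), not by citing symmetry. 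Second, the detour through compositions $A_\delta\circ(\cdot)\circ A_\delta$ is unnecessary even where it could be made rigorous: once the matrix-entry lower bound is established, the pair $(f(x),f(x'))$ itself lies in the entourage given by Lemma~\ref{lemma:struct}, and indeed the paper's proof of this lemma never invokes Lemma~\ref{lemma:entourages} at all.
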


\begin{proof}
We first prove that $f$ is coarse. Let $E\in\mathcal E$ be an entourage, and let us show that 
\[
(f\times f)[E]=\{(f(x),f(x'))\in Y\times Y\mid (x,x')\in E\}
\]
 is an entourage in $Y\times Y$. Note that if $E=\bigsqcup_{i\leq n} E_i$ then 
 \[
 (f\times f)[E]=\bigsqcup_{i\leq n} (f\times f)[E_i].
 \] 
 Since $X$ is u.l.f., Lemma \ref{lemma:splitting} allows us to assume that all vertical and horizontal sections of $E$ are at most singletons. By our hypotheses there is a positive $\eta<\delta$ such that for all $y\in Y$
\[
\norm{(1-\chi_{X_{y,\eta}})\Psi(e_{yy})} <\frac{\delta^2}{2}. 
\]
By Lemma~\ref{L.Claim}, we can partition $X$ into finitely many pieces $X=\bigsqcup_{j=0}^{l-1} X_j$ such that if $x$ and $x'$ are distinct elements of $X_j$ then $Y_{x,\eta}$ and $Y_{x',\eta}$ are disjoint. For each $j<l$, let $E_j=E\cap (X_j\times X)$. It suffices to show that $(f\times f)[E_j]$ is an entourage for each $j< l$. The strategy is similar to the one used in Lemma~\ref{lemma:entourages}: we want to find $\eps>0$ and and operator $T\in\cstu(Y)$ such that 
\[
(f\times f)[E_j]\subseteq\{(y,y')\in Y^2\mid \norm{e_{y'y'}Te_{yy}}>\eps\}.
\]
Fix $j< l$ and let $v_j\in\cstu(X)$ be the partial translation associated to $E_j$ (as in the paragraph before  Lemma~\ref{lemma:struct}). Let $(y,y')\in (f\times f)[E_j]$, and fix $x\in X_j$ and $x'\in X$ such that $(x,x')\in E_j$, $f(x)=y$ and $f(x')=y'$. Since $f(x)\in Y_{x,\delta}$ for all $x\in X$, by applying Lemma~\ref{LemmaRank1Inequality}, we have 
 \[
 \norm{e_{y'y'} \Phi(e_{xx'} )e_{yy}\Phi(1)}= 
 \| e_{y'y'} \Phi( e_{xx'} ) \| \| \Phi(e_{xx'}) e_{yy} \Phi(1)\|\geq  \delta^2.
 \]
Let $A_x=\{z\in X\mid z\neq x\text{ and } \exists z'\in X \text{ with }(z,z')\in E_j\}$, and note that 
$
(v_j-e_{xx'})\chi_{A_x}=v_j-e_{xx'}$ and that 
 if $z\in A_x$, then $z\in X_j$. As $y\in Y_{x,\eta}$,  the choice of the partition $X_j$ implies that  $y\notin Y_{z,\eta}$, and therefore $z\notin X_{y,\eta}$. This shows that $A_x\subseteq X\setminus X_{y,\eta}$, hence 
\[
v_j-e_{xx'}=(v_j-e_{xx'})\chi_{A_x}=(v_j-e_{xx'})(1-\chi_{X_{y,\eta}}).
\]
By our choice of $\eta$, this implies that
\[
\norm{e_{y'y'} \Phi(v_j-e_{xx'} )e_{yy}\Phi(1)}\leq \norm{ (v_j-e_{xx'}) (1-\chi_{X_{y,\eta}}) \Psi(e_{yy})}\leq \frac{\delta^2}{2}.
\]
Hence, if $(y,y')\in (f\times f)[E_j]$ then   
\begin{multline*}
\norm{e_{y'y'}\Phi(v_j )e_{yy}}\geq\norm{e_{y'y'}\Phi(v_j )e_{yy}\Phi(1)}\\
\geq \norm{e_{y'y'}\Phi(  e_{xx'}) e_{yy}\Phi(1)}-\norm{e_{y'y'} \Phi(v_j-e_{xx'}) e_{yy}\Phi(1)}\geq\frac{\delta^2}{2}.
\end{multline*}
By Lemma~\ref{lemma:struct}, this finishes the proof that $f$ is coarse.

We now prove that $f$ is expanding. We reboot the notation in this proof and fix   $E\in\cE'$. By partitioning it into finitely many pieces, we may assume that all vertical and horizontal sections of $E$ are singletons. We want to show that 
\[
\{(x,x')\in X^2\mid (f(x),f(x'))\in E\}
\]
is an entourage. By the assumptions, we can fix  $\eta>0$ small enough to have  
\[
\norm{(1-\chi_{Y_{x,\eta}}) \Phi( e_{xx} )}<\frac{\delta^2}{4}
\]
 for all $x\in X$. By Lemma~\ref{L.Claim}, we can find a finite partition of $Y=\bigsqcup_{j=0}^{k-1}Y_j$ such that for all $j<k$, the sets $X_{y,\eta}$ and $X_{y',\eta}$ are disjoint for distinct $y,y'\in Y_j$. For $j< k$, let
\[
E_j=\{(y,y')\in E\mid y\in Y_j\}. 
\]
It suffices to show that each 
\[
A_j=\{(x,x')\in X^2\mid (f(x),f(x'))\in E_j\}
\]
 is an entourage. Let $v_{j}$ the partial translation associated to $E_j$. We claim that 
\[
(f\times f)^{-1}[E_j]\subseteq \{(z,z')\in X^2\mid \norm{e_{z'z'} \Psi(v_j ) e_{zz}}>\delta^2/2\}.
\]
This will suffice by Lemma~\ref{lemma:struct}, since $\Psi( v_j )\in \cstu(X)$. Pick $(x,x')\in A_j$. As $f(x)\in Y_{x,\delta}$, we have $x\in X_{f(x),\delta}$, and similarly $x'\in Y_{f(x'),\delta}$, hence Lemma~\ref{LemmaRank1Inequality}  implies that
\begin{equation}\label{Eq.xx'}
 \| e_{x'x'} \Psi (e_{f(x) f(x')} ) e_{xx}\|
\geq \| e_{x'x'} \Psi( e_{f(x) f(x')} ) \|\|\Psi( e_{f(x) f(x')} ) e_{xx}\|
\geq\delta^2.
\end{equation}
With $B_x=\{y\in Y\mid(\exists y'\in Y) (y,y')\in E_j\}\setminus\{(f(x),f(x'))\}$ we have 
\[
v_{j}-e_{f(x)f(x')}=(v_{j}-e_{f(x)f(x')})\chi_{B_x}.
\]
 Moreover, as for all distinct $y$ and $y'$ in $Y_j$ the sets $X_{y,\eta}$ and~$X_{y',\eta}$ are disjoint, we have $x\notin X_{y,\eta}$ for all $y\in B_x$, meaning that $B_x\subseteq Y\setminus Y_{x,\eta}$. This shows that 
\[
v_{j}-e_{f(x)f(x')}=(v_{j}-e_{f(x)f(x')})(1-\chi_{Y_{x,\eta}}).
\]
Hence,   our  choice of $\eta$, implies that  
\begin{align*}
\norm{\Psi(v_{j}-e_{f(x)f(x')} )e_{xx}}
&=\norm{\Phi(1)( v_{j}-e_{f(x)f(x')})  \Phi(e_{xx})}\\
& \leq \norm{  ( v_{j}-e_{f(x)f(x')}) (1-\chi_{Y_{x,\eta}})\Phi( e_{xx})}\\
& \leq\frac{\delta^2}{2}.
\end{align*}
By this and \eqref{Eq.xx'}, for all $(x,x')\in A_j$ we have $\norm{e_{x'x'} \Psi( v_{j} ) e_{xx}}\geq\frac{\delta^2}{2}$, or, in other words, the thesis.
\end{proof}

\section{Obtaining rigidity and coarse equivalence}\label{section:geometry}
\label{S.6} 
In this section we show how geometric assumptions on the spaces $X$ and~$Y$ are  sufficient to imply the assumptions of Lemmas~\ref{lemma:entourages} and~\ref{lemma:coarse}. As a consequence, we prove all our main results.

\subsection{The case of property A}
  An operator between two uniform Roe algebras is \emph{compact-preserving} if it sends compact operators to compact operators. We start by a relative to  \cite[Lemma~3.3]{BragaVignati2019}.

\begin{lemma} \label{L:AA} 
Let $(X,\cE)$ and $(Y,\cE')$ be u.l.f. coarse spaces, and let $\Phi\colon \ell_\infty(X)\to \cstu(Y)$ be a strongly continuous bounded linear map. Then for every $b$ in $\cK(\ell_2(Y))$ and every $\e>0$ there exists a finite $F\subseteq X$ such that for all contractions $a\in \ell_\infty(X\setminus F)$ we have $\max\{\norm{b\Phi(a)},\norm{\Phi(a)b}\}<\e$. 
\end{lemma}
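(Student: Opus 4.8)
The plan is to argue by contradiction, exploiting that the contractions supported off a finite set form a net tending to $0$ in the strong(-$*$) operator topology. Suppose the conclusion fails for some $b\in\cK(\ell_2(Y))$ and some $\e>0$. Then for every finite $F\subseteq X$ there is a contraction $a_F\in\ell_\infty(X\setminus F)$ with $\max\{\norm{b\Phi(a_F)},\norm{\Phi(a_F)b}\}\geq\e$. Ordering the finite subsets of $X$ by inclusion turns $(a_F)_F$ into a bounded net, and I would first observe that it converges to $0$ strongly: for any $\zeta\in\ell_2(X)$ one has $\norm{a_F\zeta}=\norm{a_F\chi_{X\setminus F}\zeta}\leq\norm{\chi_{X\setminus F}\zeta}\to 0$, and applying the same estimate to $\overline{a_F}=a_F^*$ shows $a_F\to 0$ in the strong-$*$ topology as well.

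Next I would push this through $\Phi$. Since the net is bounded, strong continuity gives $\Phi(a_F)\to\Phi(0)=0$ in the strong operator topology (and, in the strong-$*$ version, $\Phi(a_F)^*\to 0$ strongly too). The remaining ingredient is the elementary fact that if $T_\lambda\to 0$ strongly with $\sup_\lambda\norm{T_\lambda}<\infty$ and $K$ is compact, then $\norm{T_\lambda K}\to 0$: approximate $K$ in norm by a finite-rank $K'=\sum_k\xi_k\otimes\eta_k$ and use $T_\lambda K'=\sum_k(T_\lambda\xi_k)\otimes\eta_k$, whose norm is at most $\sum_k\norm{T_\lambda\xi_k}\,\norm{\eta_k}\to 0$. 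Applying this with $T_\lambda=\Phi(a_F)$ and $K=b$ gives $\norm{\Phi(a_F)b}\to 0$, which already contradicts the half of the assumption coming from the second coordinate.

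The delicate half, and the step I expect to be the main obstacle, is $\norm{b\Phi(a_F)}$: here $b$ multiplies $\Phi(a_F)$ on the \emph{left}, so the factorization fact instead requires $\Phi(a_F)^*\to 0$ \emph{strongly}, whereas strong convergence of $\Phi(a_F)$ by itself only yields $\Phi(a_F)^*\to 0$ in the \emph{weak} operator topology, which is insufficient (a rank-one example such as $T_n=\delta_0\otimes\delta_n$ has $T_n\to 0$ strongly yet $\norm{KT_n}\not\to 0$ for a suitable compact $K$). I would resolve this by passing to adjoints: writing $\norm{b\Phi(a_F)}=\norm{\Phi(a_F)^*b^*}$ and using that the maps relevant to us are $*$-preserving — for $\Phi=\Ad U\rs\ell_\infty(X)$ as in Assumption~\ref{Assumption} one has $\Phi(a)^*=\Phi(a^*)$, and abstractly this is exactly the point where one uses that $\Phi$ is strong-$*$ continuous rather than merely strongly continuous. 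Then $\Phi(a_F)^*=\Phi(\overline{a_F})$ with $\overline{a_F}$ again a contraction supported off $F$, so $\Phi(a_F)^*\to 0$ strongly; as $b^*$ is compact, the factorization fact gives $\norm{b\Phi(a_F)}=\norm{\Phi(a_F)^*b^*}\to 0$. Together with the previous paragraph this contradicts $\max\{\norm{b\Phi(a_F)},\norm{\Phi(a_F)b}\}\geq\e$, completing the proof.
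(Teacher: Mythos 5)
Your proposal follows the same route as the paper's proof: assume the conclusion fails, produce contractions supported off larger and larger finite sets, push them through $\Phi$ by strong continuity, and use the fact that a bounded strongly null family times a compact operator is norm-null. (The paper organizes this as a recursion producing a sequence $a_n$ of contractions with pairwise disjoint \emph{finite} supports, using strong continuity and lower semicontinuity of the norm under strong limits to cut each $a$ down to $a\chi_G$ for finite $G$; your net indexed by finite subsets of $X$ is an equivalent repackaging of the same idea.) Moreover, you correctly isolated the one genuinely delicate point, which the paper's own proof glosses over: from $\Phi(a_n)\to 0$ strongly and $b$ compact one gets $\norm{\Phi(a_n)b}\to 0$ but \emph{not} $\norm{b\Phi(a_n)}\to 0$; the latter needs $\Phi(a_n)^*\to 0$ strongly, and your rank-one example $T_n=\delta_0\otimes\delta_n$ is exactly the obstruction. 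The paper simply asserts that ``both $\Phi(a_n)b$ and $b\Phi(a_n)$ converge to $0$ in norm,'' so on this point your write-up is more careful than the original.

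However, your resolution does not prove the lemma as stated: you additionally assume that $\Phi$ is $*$-preserving (equivalently here, strong-$*$ continuous), so that $\Phi(a_F)^*=\Phi(\overline{a_F})$. The hypothesis is only that $\Phi$ is a strongly continuous bounded linear map, and inside the paper this generality is actually invoked: Lemma~\ref{L:AA} is applied in the proof of Lemma~\ref{LemmaBla1} to maps $\Phi\colon\cstu(X)\to\cstu(Y)$ assumed merely compact-preserving, strongly continuous, bounded and linear. (Every map to which these lemmas are \emph{ultimately} applied --- $\Ad U$, $\Ad U^*$, and the positive map $\Theta$ in Lemma~\ref{lemma:injections2} --- is $*$-preserving, so your variant would suffice after adding that hypothesis to Lemmas~\ref{L:AA}, \ref{LemmaBla1} and \ref{LemmaBla2}; but that amends the statements rather than proving them.) The gap can be closed with no extra hypothesis, as follows. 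Run the paper's recursion to obtain contractions $a_n$ with pairwise disjoint finite supports and $\max\{\norm{b\Phi(a_n)},\norm{\Phi(a_n)b}\}\geq \e/2$, and reduce $b$ to a rank-one operator $\xi\otimes\eta$. For any signs $\theta_n\in\{\pm 1\}$ and any finite $F$ the element $\sum_{n\in F}\theta_n a_n$ is again a contraction (disjoint supports), so $\norm{\sum_{n\in F}\theta_n\Phi(a_n)}\leq\norm{\Phi}$; averaging $\norm{\sum_{n\in F}\theta_n\Phi(a_n)^*\eta}^2$ over independent uniform signs kills the cross terms and yields the Bessel-type bound $\sum_{n}\norm{\Phi(a_n)^*\eta}^2\leq\norm{\Phi}^2\norm{\eta}^2$, and likewise $\sum_n\norm{\Phi(a_n)\xi}^2\leq\norm{\Phi}^2\norm{\xi}^2$. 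Thus $\Phi(a_n)^*\to 0$ and $\Phi(a_n)\to 0$ strongly using only linearity and boundedness of $\Phi$ --- this is a hands-on instance of the little Grothendieck theorem that bounded operators from a $C(K)$-space to a Hilbert space are $2$-summing --- which gives both halves of the conclusion, and incidentally repairs the paper's own proof as well.
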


\begin{proof} Assume otherwise. 

Recursively find contractions $a_n$, for $n\in \bbN$, in $\ell_\infty(X)$ and finite $F_n\subseteq X$ such that $F_n\subseteq F_{n+1}$,  $\max(\|b\Phi(a_n)\|, \|\Phi(a_n) b)\geq \eps/2$,  and $\supp(a_n)\subseteq F_n$. This is possible since by the strong continuity of $\Phi$, $\|\Phi(a)b\|> \eps/2$  
	implies $\|\Phi(a \chi_G)b\|>\eps/2$ for some finite $G\subseteq X$. 
		Then $a_n\to 0$ in the strong operator topology, hence $\Phi(a_n)\to 0$ in the strong operator topology. Since $b$ is compact, both $\Phi(a_n)b$ and $b\Phi(a_n)$  converge to $0$ in norm; contradiction. 
\end{proof}

The metric version of the following is \cite[Lemma 6.6]{BragaFarahVignati2019} (cf. \cite[Lemma~6.6]{WhiteWillett2017}).

\begin{lemma}\label{LemmaBla1}
Let $(X,\mathcal E)$ and $(Y,\mathcal E')$ be u.l.f. coarse spaces, assume that $(Y,\mathcal E')$ has property $A$, and let $\Phi\colon\cstu(X)\to\cstu(Y)$ be a compact-preserving strongly continuous bounded linear map. Let $a\in \cstu(Y)$ and $\eps,\delta>0$. Let $(A_n)_n$ and $(B_n)_n$ be sequences of finite subsets of $X$ and $Y$, respectively, such that at least one of them is a sequence of disjoint sets. If $\norm{\Phi(\chi_{A_n})\chi_{B_n}a}>\varepsilon$ for all $n\in\NN$, then there exists $k>0$ such that for infinitely many $n\in \bbN$ some $D_n\subseteq B_n$ satisfies $|D_n|\leq k$ and 
\[
\norm{\Phi(\chi_{A_n})\chi_{D_n}a}>\varepsilon-\delta.
\]
\end{lemma}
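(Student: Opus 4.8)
The plan is to argue by contradiction and, after two preliminary reductions, to manufacture a noncompact ghost in $\cstu(Y,\cE')$, which property A forbids by Corollary~\ref{CorPropA}. The first thing I would record is the structural observation that makes the hypotheses tractable: each $A_n$ is \emph{finite}, so $\chi_{A_n}$ is a finite-rank projection and hence compact, and since $\Phi$ is compact-preserving each $\Phi(\chi_{A_n})$ is a compact operator on $\ell_2(Y)$. Next I would replace $a$ by an operator $b\in\cstu(Y)$ of bounded propagation with $\|\Phi\|\,\|a-b\|<\delta/2$; since $\big|\,\|\Phi(\chi_{A_n})\chi_D a\|-\|\Phi(\chi_{A_n})\chi_D b\|\,\big|\le\|\Phi\|\,\|a-b\|$ for every $D$, it suffices to prove the statement with $b$ in place of $a$ (and $\delta/2$ in place of $\delta$). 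Assuming the conclusion fails means that for every $k$ only finitely many $n$ admit a $D\subseteq B_n$ with $|D|\le k$ and $\norm{\Phi(\chi_{A_n})\chi_D b}>\varepsilon-\delta$; hence the minimal size of a capturing subset of $B_n$ tends to infinity, i.e.\ for each fixed size bound, deleting any set of that size from $B_n$ eventually leaves $\norm{\Phi(\chi_{A_n})\chi_{B_n\setminus D}b}\ge\delta$.

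The second reduction removes nonmetrizability. Along the sequence there are only countably many operators $b,\Phi(\chi_{A_0}),\Phi(\chi_{A_1}),\dots$, so by Lemma~\ref{L.reflection} each is approximated within $\delta/4$ by an operator of $\cE'$-bounded propagation, and I let $\cF$ be the countably generated reduct of $\cE'$ generated by $\supp(b)$ together with the supports of all these approximants. By Lemma~\ref{L.infty-valued-metric} the space $(Y,\cF)$ is metrizable, and by Corollary~\ref{CorPropA} (property A is equivalent to its restriction to countably generated reducts) it still has property A; moreover every operator relevant to the contradiction now lies, up to arbitrarily small error, inside $\cstu(Y,\cF)$. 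This places me in the metric setting, where sparse subsequences exist.

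In the metric reduct I would build the ghost. Using that $(Y,\cF)$ is u.l.f.\ metric and that one of $(A_n)_n,(B_n)_n$ is disjoint, I pass to a sparse subsequence, so that the pieces $\Phi(\chi_{A_n})\chi_{B_n}b$ are carried, up to small error, by regions that are pairwise arbitrarily far apart. Assembling these pieces—normalized so each still has norm $\ge\delta$ after deletion of any fixed-size set, which the failure of localization guarantees—produces an operator $G\in\cstu(Y,\cF)$ whose off-block matrix entries tend to $0$ (so $G$ is a ghost) while $G$ acts with norm $\ge\delta$ on an infinite, spreading, essentially orthogonal family (so $G$ is noncompact). The decay of the cross terms is exactly where compactness of $\Phi(\chi_{A_n})$ and Lemma~\ref{L:AA} enter: they force $\norm{b'\Phi(\chi_{A_n})}$ and $\norm{\Phi(\chi_{A_n})b'}$ to be small for any fixed compact $b'$, which along a sparse subsequence kills the interactions between distinct blocks. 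A noncompact ghost in $\cstu(Y,\cF)$ then contradicts Corollary~\ref{CorPropA}, and the lemma follows.

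The step I expect to be the main obstacle is this assembly, and the reason is that $\Phi(\chi_{A_n})$ carries no propagation bound: one cannot localize $\Phi(\chi_{A_n})\chi_{B_n}b$ by the operator-norm-localization argument of the metrizable case directly, since ONL (Definition~\ref{DefiONL}) applies only to operators of bounded propagation, and the uncontrolled left factor genuinely prevents pulling the localization through. My way around this is twofold: first to exploit that $\Phi(\chi_{A_n})$ is at least \emph{compact} (because $A_n$ is finite), so that Lemma~\ref{L:AA} governs its interaction with the other compact blocks; and second to reflect to a countably generated reduct that simultaneously recaptures all these operators up to small error and restores metrizability, thereby importing the sparse-subsequence ghost construction into a situation where $(Y,\cE')$ itself admits no sparse subsequences at all.
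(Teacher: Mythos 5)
Your reductions (replacing $a$ by a finite--propagation $b$, and reflecting to a countably generated reduct $(Y,\cF)$ with property A via Corollary~\ref{CorPropA}) are fine, but the heart of your argument --- the ghost assembly --- has a genuine gap. The negation of the conclusion only says: for each $k$, eventually \emph{every} $D\subseteq B_n$ with $|D|\le k$ satisfies $\norm{\Phi(\chi_{A_n})\chi_D b}\le \eps-\delta$. Taking $D$ a singleton, this bounds every matrix entry of the block $T_n=\Phi(\chi_{A_n})\chi_{B_n}b$ by the \emph{constant} $\eps-\delta$; it does not make any entry tend to $0$. A ghost (Definition~\ref{DefiGhost}) must have \emph{all} entries outside finite sets arbitrarily small, including the entries inside the blocks, which march off to infinity along your sparse subsequence. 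Your construction controls only the \emph{cross-block} terms (via compactness of $\Phi(\chi_{A_n})$ and Lemma~\ref{L:AA}); nothing in the failure hypothesis prevents each $T_n$ from carrying, say, a single matrix entry of modulus exactly $\eps-\delta$ forever, in which case the assembled $G$ is simply not a ghost and no contradiction with property A is reached. This is not a bookkeeping issue: the ghost trick does work in Lemma~\ref{lemma:injections2} of the paper, but only because there the relevant operators $\Theta(e_{xx})$ are rank one, so small column captures force small entries; here $\Phi(\chi_{A_n})$ has rank up to $|A_n|$, which is unbounded, and that implication fails.

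The paper's proof is direct (no contradiction, no metrizable reduct, no sparse subsequence) and resolves exactly the obstacle you flagged --- that ONL cannot be applied to $\Phi(\chi_{A_n})\chi_{B_n}b$ because the left factor has no propagation bound --- by a different device. After arranging both sequences to be disjoint and localizing ranges into disjoint finite sets $C_n$ with $\norm{(1-\chi_{C_n})\Phi(\chi_{A_n})}<2^{-n-1}$, one checks that $c=\sum_n \chi_{C_n}\Phi(\chi_{A_n})\chi_{B_n}$ differs from $\Phi(\chi_A)\chi_B$ (with $A=\bigcup_n A_n$, $B=\bigcup_n B_n$) by a compact element of $\cstu(Y)$, hence $c\in\cstu(Y)$. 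Now a \emph{single} finite-propagation approximant $d$ of $c$ yields cutdowns $d_n=\chi_{C_n}d\chi_{B_n}$ which uniformly approximate the blocks \emph{and} have supports inside the fixed entourage $\supp(d)$. ONL (equivalent to property A by Corollary~\ref{CorPropA}, for arbitrary u.l.f.\ coarse spaces) then applies to the operators $d_nb$, producing unit vectors $\xi_n$ with uniformly bounded supports; the sets $D_n=\supp(b\xi_n)\cap B_n$ have uniformly bounded cardinality by uniform local finiteness, and the estimates transfer back from $d_n$ to $\Phi(\chi_{A_n})$. So the missing idea in your proposal is the passage from the sequence of uncontrolled blocks to one element $c$ of the algebra, which restores a uniform propagation bound and makes localization, rather than ghosts, the engine of the proof.
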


\begin{proof}
Fix $\varepsilon$, $\delta$, $(A_n)_n$ and $(B_n)_n$ as in the hypotheses of the lemma. We first show that we may assume that both $(A_n)_n$ and $(B_n)_n$ are sequences of disjoint sets.

First consider the case when $(B_n)_n$ is a sequence of disjoint sets.
Since $\Phi$ is compact-preserving, we have $\lim_n\|\Phi(\chi_G)\chi_{B_n}\|=0$ for every finite $G\subseteq X$. Hence, by going to a subsequence and shrinking the $A_n$'s, we can assume that the $A_n$'s are also disjoint. 
The argument in the case when $(A_n)_n$ is a sequence of disjoint sets  is analogous and therefore omitted.

 For all $n\in\NN$ we have
 \[
 \norm{\Phi(\chi_{A_n})\chi_{B_n}a}>\varepsilon-\frac{\delta}{32}.
 \]
Using that $\Phi$ is compact-preserving once again, pick a sequence $(C_n)_n$ of finite subsets of $Y$ such that such that 
\[
\norm{(1-\chi_{C_n})\Phi(\chi_{A_n})}<2^{-n-1}.
\]
By applying Lemma \ref{L:AA}, we can assume that the $C_n$'s are disjoint, and that our subsequence satisfies $\|\Phi(\chi_{A_n})\chi_{B_m}\|\leq 2^{-n-m}$ for all $n\neq m$. Let $A=\bigcup_n A_n$ and $B=\bigcup_n B_n$. Since each $\Phi(\chi_{A_n})\chi_{B_m}$ is compact, absolute convergence implies that 
$
\Phi(\chi_A)\chi_B-\sum_{n\in\N}\chi_{C_n}\Phi(\chi_{A_n})\chi_{B_n}$, 
is compact and therefore
\[
c=\sum_{n\in\N}\chi_{C_n}\Phi(\chi_{A_n})\chi_{B_n}
\]
belongs to $\cstu(Y)$.

Pick $d\in\cstu[Y]$ with $\norm{c-d}<\delta/(64\|a\|)$ and let $F=\supp(d)$. 
 For each $n\in\N$ let $d_n=\chi_{C_n}d\chi_{B_n}$. Then $\supp(d_n)=F\cap (C_n\times B_n)\subseteq F$.  Fix $n_0\in\N$ such that $2^{-n_0-1}<\delta/(64\|a\|)$. Then 
\begin{align*}
\norm{d_na-\Phi(\chi_{A_n})\chi_{B_n}a}&\leq \norm{d_na-\chi_{C_n}\Phi(\chi_{A_n})\chi_{B_n}a}\\&
\ \ \ \ +\norm{(1-\chi_{C_n})\Phi(\chi_{A_n})\chi_{B_n}a}\\
&<\delta/32.
\end{align*}
for $n\geq n_0$. In particular $\norm{d_na}>\varepsilon-\delta/16$. Fix $b\in \cstu[Y]$ such that $\|a-b\|<\delta/(16)$. Then $\norm{d_nb}>\varepsilon-\delta/8$ for all $n\geq n_0$.

Fix $\gamma\in (0,1)$  such that $\gamma(\eps-\delta/8)> \eps-\delta/4$ and let $F'\in\mathcal E'$ be given by the definition of ONL for $F=\supp(d)$ and $\gamma$. Fix $n\in \bbN$ for a moment. Since  $\supp(d_nb)\subseteq F$,  there exists a unit vector $\xi_n\in \ell_2(Y)$ with $\supp(\xi_n)^2\subseteq F'$ and such that $\norm{d_nb\xi_n}>\eps-\delta/4$. Let $D_n=\supp(b\xi_n)$ and note that 
\[
D_n\subseteq \Big\{y\in Y\mid \exists y'\in \supp(\xi_n) , \ (y,y')\in \supp(b)\Big\}.
\]
Since $(Y,\cE')$ is u.l.f., there is $k$ such that $\sup_{n\in\N}|D_n|<k$.

As $\norm{d_nb\xi_n}>\eps-\delta/4$, we have that $\norm{d_n\chi_{D_n}b}>\eps-\delta/4$, which implies that $\|d_n\chi_{D_n}a\|> \eps-\delta/2$, for all $n\in\N$. Since $\norm{d_na-\Phi(\chi_{A_n})\chi_{B_n}a}<\delta/32$, we have that 
\[\norm{\Phi(\chi_{A_n})\chi_{B_n\cap D_n}a}\geq \|d_n\chi_{D_n}a\|-\|d_n\chi_{D_n}a-\Phi(\chi_{A_n})\chi_{B_n \cap D_n}a\|>\varepsilon-\delta\] for all $n\geq n_0$. This concludes the proof. 
\end{proof}

In the following lemma we obtain the conclusion analogous to that in Lemma~\ref{LemmaBla1} from weaker assumptions. 

\begin{lemma}\label{LemmaBla2}
Let $(X,\mathcal E)$ and $(Y,\mathcal E')$ be u.l.f. coarse spaces, assume that $(Y,\mathcal E')$ has property $A$, and let $\Phi\colon\cstu(X)\to\cstu(Y)$ be a compact-preserving strongly continuous bounded linear map. 
Suppose that $a\in \cstu(Y)$, $\eps>0$, and $\delta>0$. Then there is $k>0$ such that for all finite
$A\subseteq X$ and $B \subseteq Y$, if $\|\Phi(\chi_A)\chi_Ba\|>\eps$ then there exists $D\subseteq B$ with $|D|<k$ such that
$\|\Phi(\chi_{A})\chi_Da\|\geq \varepsilon/2-\delta$.
\end{lemma}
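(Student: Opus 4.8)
The plan is to argue by contradiction and reduce to the sequential version Lemma~\ref{LemmaBla1}, whose conclusion already produces a bound that is \emph{uniform} along the sequence. The difficulty is that Lemma~\ref{LemmaBla1} requires one of the two sequences to consist of pairwise disjoint sets, whereas here the finite sets $A$ and $B$ are completely arbitrary. The heart of the argument is therefore a combinatorial disjointification, and I expect that to be the only genuinely new point; everything else is bookkeeping.

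First I would negate the statement: if the conclusion fails, then for every $m\in\NN$ there are finite $A_m\subseteq X$ and $B_m\subseteq Y$ with $\norm{\Phi(\chi_{A_m})\chi_{B_m}a}>\eps$ while $\norm{\Phi(\chi_{A_m})\chi_D a}<\eps/2-\delta$ for every $D\subseteq B_m$ with $|D|<m$. Taking $D=B_m$ in the case $|B_m|<m$ would contradict $\norm{\Phi(\chi_{A_m})\chi_{B_m}a}>\eps>\eps/2-\delta$, so necessarily $|B_m|\geq m$, and in particular $|B_m|\to\infty$.

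Next I would extract pairwise disjoint ``petals'' greedily. Put $Q_0=\emptyset$; having chosen indices $n_1<\cdots<n_k$ and the finite set $Q_k$, pick $n_{k+1}>\max(n_k,|Q_k|)$ and set the petal $P_{k+1}=B_{n_{k+1}}\setminus Q_k$, the core $R_{k+1}=B_{n_{k+1}}\cap Q_k$, and $Q_{k+1}=Q_k\cup P_{k+1}$. By construction the $P_k$ are pairwise disjoint; since $|R_{k+1}|\leq |Q_k|<n_{k+1}$ the defining property of $B_{n_{k+1}}$ yields $\norm{\Phi(\chi_{A_{n_{k+1}}})\chi_{R_{k+1}}a}<\eps/2-\delta$; and since $|B_{n_{k+1}}|\geq n_{k+1}>|Q_k|$ the petal is nonempty. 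Using $\chi_{B_{n_{k+1}}}=\chi_{P_{k+1}}+\chi_{R_{k+1}}$ and the triangle inequality I obtain $\norm{\Phi(\chi_{A_{n_{k+1}}})\chi_{P_{k+1}}a}>\eps-(\eps/2-\delta)=\eps/2+\delta$. This step is what replaces the disjointness hypothesis: note that it works uniformly, including for ``nested'' families $B_n$ (where no disjoint subfamily exists), precisely because one peels off the already-used part $Q_k$ as the core.

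Finally I would apply Lemma~\ref{LemmaBla1} to the sequences $(A_{n_k})_k$ and $(P_k)_k$, which is legitimate because $(P_k)_k$ is a sequence of disjoint sets, with $\eps/2+\delta$ in the role of the parameter $\varepsilon$ and $2\delta$ in the role of $\delta$. This produces a single $k_0>0$ and infinitely many $k$ admitting $D_k\subseteq P_k\subseteq B_{n_k}$ with $|D_k|\leq k_0$ and $\norm{\Phi(\chi_{A_{n_k}})\chi_{D_k}a}>(\eps/2+\delta)-2\delta=\eps/2-\delta$. Since $n_k\to\infty$, infinitely many of these $k$ also satisfy $n_k>k_0$, whence $|D_k|<n_k$; for such $k$ the set $D_k$ contradicts the defining property of the pair $(A_{n_k},B_{n_k})$. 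This contradiction establishes the lemma.
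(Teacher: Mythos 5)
Your proposal is correct and follows essentially the same route as the paper's proof: argue by contradiction, greedily peel off pairwise disjoint pieces $P_{k+1}=B_{n_{k+1}}\setminus Q_k$ (the paper's $C_k=B_{n_k}\setminus\bigcup_{i<k}C_i$), control the overlap with the already-used points via the defining property of $B_{n_{k+1}}$, and feed the resulting disjoint sequence into Lemma~\ref{LemmaBla1}. The only divergence is that the paper first extracts a pointwise-convergent subnet of $(B_n)_n$ by compactness of the power set and estimates the core as $L\cap F$, whereas you bound $\lvert B_{n_{k+1}}\cap Q_k\rvert\le\lvert Q_k\rvert<n_{k+1}$ directly and invoke the defining property --- your version shows that the compactness/subnet step is dispensable (and your handling of the final size constraint, passing to the infinitely many $k$ with $n_k>k_0$, is also slightly cleaner than the paper's).
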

\begin{proof}
Assume otherwise, and let $\delta$ and $\varepsilon$ and sequences $(A_n)_n$ and $(B_n)_n$ of finite subsets of $X$ and $Y$, respectively, such that $\norm{\Phi(\chi_{A_n})\chi_{B_n}a}>\varepsilon$, for all $n\in\N$, and $\norm{\Phi(\chi_{A_n})\chi_{D}a}<\varepsilon/2-\delta$ for all $D\subseteq B_n$ with $|D|<n$. 

By compactness of $2^{X}$, the sequence $(B_n)_n$ has a subnet which converges pointwise to some $L\subseteq X$.\footnote{Since $\bigcup_n B_n$ is countable one easily obtains a convergent subsequence, but this fact will not be used since it does not help simplify the proof.} In particular, given a finite $F
\subseteq X$ and $\ell\in\N$, there exists $n>\ell$ such that 
\[B_{n}\cap F =L\cap F.
\] 
We construct a strictly increasing sequence $(n_k)_k\subseteq\N$ and disjoint sets $C_k\subseteq B_{n_k}$ by induction. Let $n_0=0$ and $C_0=B_0$. Suppose that $n_0,\ldots,n_{k-1}$ and $C_0,\ldots,C_{k-1}$ have been constructed. Let $F=\bigcup_{i=1}^{k-1} C_{i}$. Pick $n_k> \max\{n_{k-1}, |F|\}$ such that if $B_{n_k}\cap F=L\cap F$, and let $C_k=B_{n_k}\setminus F$. Note that $n_k\geq k$ and the $C_k$'s are pairwise disjoint.

Since $L\cap F\subseteq B_{n_k}$ and $|L\cap F|< n_k$, we have $\| \Phi(\chi_{A_{n_k}})\chi_{L\cap F}a\|<\varepsilon/2$ for all $k\in\N$. Hence $\|\Phi(\chi_{A_{n_k}})\chi_{C_k}a\|>\varepsilon/2$ for all $k\in\N$. Also, if $D\subseteq C_k$ has at most $k$ elements, then $\|\Phi(\chi_{A_{n_k}})\chi_{D}a\|< \varepsilon/2-\delta$. Hence, the sequences $(A_{n_k})_k$ and $(C_k)_k$ contradict Lemma \ref{LemmaBla1}.
\end{proof}

The following proposition summarizes the previous two lemmas  in the setting of Assumption~\ref{Assumption}.

\begin{proposition}\label{prop:largesets}
In the setting of Assumption \ref{Assumption}: Suppose further that $X$ and $Y$ have property $A$. Then for all $\eps>0$ there exists  $k>0$ such that for all $A\subseteq X$ and $B\subseteq Y$ the following holds. 
\begin{enumerate}
\item\label{1.prop:largesets} If $\norm{\Phi(\chi_A)\chi_B\Phi(1)}>\varepsilon$ then there are $C\subseteq A$ and $D\subseteq B$ such that $\max\{|C|, |D|\}\leq k$ and $
\norm{\Phi(\chi_C)\chi_D\Phi(1)}>\e/8$.
\item \label{2.prop:largesets} If $\norm{\chi_A\Psi(\chi_B)}>\varepsilon$ then there are $C\subseteq A$ and $D\subseteq B$ such that $\max\{|C|,|D|\}\leq k$ and $ \norm{\chi_C\Psi(\chi_D)}>\eps/8$. 
\end{enumerate}
\end{proposition}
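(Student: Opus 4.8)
The plan is to prove both statements simultaneously by applying the machinery of Lemmas~\ref{LemmaBla1} and~\ref{LemmaBla2} to two strongly continuous, compact-preserving, bounded linear maps: the map $\Phi$ itself (for \eqref{1.prop:largesets}) and the map $\Psi$ (for \eqref{2.prop:largesets}). First I would check that both $\Phi$ and $\Psi$ satisfy the hypotheses of those lemmas. By Assumption~\ref{Assumption} and Lemma~\ref{LemmaPhiStronglyContAndU}, $\Phi(a)=UaU^*$ and $\Psi(b)=U^*bU$ for an isometry $U$, so both are completely positive, contractive, and strongly continuous; since $U$ is an isometry, conjugation by $U$ preserves compactness, so both are compact-preserving. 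Property~A is assumed for both $X$ and $Y$, which is exactly what Lemma~\ref{LemmaBla2} requires (the roles of $X$ and $Y$ being swapped between the two statements).

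For \eqref{1.prop:largesets}, the idea is to apply Lemma~\ref{LemmaBla2} to the map $\Phi$ with $a=\Phi(1)$ and the given $\varepsilon$, choosing an auxiliary parameter $\delta$ small enough that $\varepsilon/2-\delta\geq\varepsilon/4$ (say $\delta=\varepsilon/4$). This yields a bound $k_0$ such that whenever $\norm{\Phi(\chi_A)\chi_B\Phi(1)}>\varepsilon$, there is $D\subseteq B$ with $|D|<k_0$ and $\norm{\Phi(\chi_A)\chi_D\Phi(1)}\geq\varepsilon/4$. This controls the $Y$-side. To control the $A$-side, I would apply Lemma~\ref{LemmaBla2} a second time, but now exploiting symmetry: since $\norm{\Phi(\chi_A)\chi_D\Phi(1)}=\norm{\Phi(1)\chi_D\Phi(\chi_A)}$ by taking adjoints, and $\norm{\Phi(\chi_A)\chi_D\Phi(1)}=\norm{\chi_A\Psi(\chi_D)}$ by the identities following Assumption~\ref{Assumption} (using $\Phi(\chi_A)^*=\Phi(\chi_A)$ and the relation $\|\Psi(b)a\|=\|\Phi(1)b\Phi(a)\|$), I can feed this quantity back into Lemma~\ref{LemmaBla2} applied to $\Psi$ to extract a small $C\subseteq A$ of size at most some $k_1$ with $\norm{\Phi(\chi_C)\chi_D\Phi(1)}\geq\varepsilon/8$. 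Setting $k=\max\{k_0,k_1\}$ gives the claim, after a harmless adjustment of the $\delta$ parameter in the second application so that $(\varepsilon/4)/2-\delta'\geq\varepsilon/8$.

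Statement \eqref{2.prop:largesets} is proved by the identical argument with the roles of $X$ and $Y$, and of $\Phi$ and $\Psi$, interchanged throughout, using the second identity $\norm{\chi_A\Psi(\chi_B)}=\norm{\Phi(\chi_A)\chi_B\Phi(1)}$ from the discussion after Assumption~\ref{Assumption}. In fact, because of these norm identities, statements \eqref{1.prop:largesets} and \eqref{2.prop:largesets} are essentially two phrasings of the same inequality, so once the bookkeeping of constants is fixed for one, the other follows by symmetry with no new ideas.

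The main obstacle I anticipate is \emph{not} any deep estimate but rather the careful tracking of the various multiplicative constants and the correct translation between the four equivalent norm expressions (namely $\norm{\Phi(\chi_A)\chi_B\Phi(1)}$, $\norm{\chi_A\Psi(\chi_B)}$, and their adjoint and transposed forms). The subtlety is that Lemma~\ref{LemmaBla2} only halves the norm (outputting $\varepsilon/2-\delta$ rather than something close to $\varepsilon$), so each of the two applications costs roughly a factor of two, and I must verify that chaining them lands the final bound above the stated threshold $\varepsilon/8$. I would therefore be careful to choose the intermediate thresholds and the $\delta$'s in each invocation so that the losses compound to exactly the factor $1/8$ quoted in the statement, and to confirm that the size bounds $k_0,k_1$ produced depend only on $\varepsilon$ (and on $a=\Phi(1)$, which is fixed), as required for the uniform $k$.
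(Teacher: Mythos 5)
Your proposal follows exactly the paper's route: two chained applications of Lemma~\ref{LemmaBla2} --- first to $\Phi$ with $a=\Phi(1)$ to shrink $B$ to a small $D$, then, via the identity $\norm{\Phi(\chi_A)\chi_D\Phi(1)}=\norm{\chi_A\Psi(\chi_D)}=\norm{\Psi(\chi_D)\chi_A}$, to $\Psi$ with $a=\mathrm{Id}_{\cstu(X)}$ to shrink $A$ to a small $C$ --- together with the observation that \eqref{2.prop:largesets} is just \eqref{1.prop:largesets} rewritten through the same identity. In approach, hypotheses checking (both maps are $\Ad U$ or $\Ad U^*$ for an isometry, hence strongly continuous and compact-preserving; property A of $Y$ is used for the application to $\Phi$ and property A of $X$ for the application to $\Psi$), and overall structure, you and the paper agree completely.

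The one real problem is the constant bookkeeping, which you correctly single out as the crux but then get wrong in the only place you spell it out. With your choice $\delta=\varepsilon/4$ in the first application, the intermediate bound is exactly $\varepsilon/2-\varepsilon/4=\varepsilon/4$, so the second application can only be run at a threshold $\tau$ with $\tau<\varepsilon/4$ (the lemma's hypothesis is a strict inequality), and its output is $\tau/2-\delta'<\varepsilon/8-\delta'<\varepsilon/8$ for every $\delta'>0$: your requirement ``$(\varepsilon/4)/2-\delta'\geq\varepsilon/8$'' is unsatisfiable, and no adjustment of $\delta'$ alone can fix it. The repair must reach back to the first application: take its $\delta_1$ small, say $\delta_1=\varepsilon/16$, so the intermediate bound is $7\varepsilon/16$; then run the second application at threshold $\tau=3\varepsilon/8<7\varepsilon/16$ with $\delta_2=\varepsilon/32$, obtaining $\tau/2-\delta_2=5\varepsilon/32>\varepsilon/8$. (In general one needs $\varepsilon/4+2\delta_2<\tau<\varepsilon/2-\delta_1$, i.e.\ $\delta_1+2\delta_2<\varepsilon/4$.) For what it is worth, the paper's own write-up stumbles at the same spot: it quotes the second application as giving $\varepsilon/4-\varepsilon/16$, forgetting the halving in the conclusion of Lemma~\ref{LemmaBla2}, and with its stated parameters actually gets only $\varepsilon/8-\varepsilon/16=\varepsilon/16$. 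So the constant $\varepsilon/8$ in the statement is attainable, but only with corrected choices as above; alternatively one could relax $\varepsilon/8$ to $\varepsilon/16$, which is all that downstream uses (such as Lemma~\ref{lemma:injective}) require, since they only need some positive bound depending on $\varepsilon$.
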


\begin{proof}Fix $\eps>0$.
Let $k_1\in\N$ be given by Lemma \ref{LemmaBla2} applied to $\Phi$, $a=\Phi(1)$, $\eps$ and $\eps/4$, and let $k_2\in\N$ be given by Lemma \ref{LemmaBla2} applied to $\Psi$, $a=\mathrm{Id}_{\cstu(X)}$, $\eps/4$ and $\eps/16$. We set $k=\max\{k_1,k_2\}$. 

\eqref{1.prop:largesets} Fix $A\subseteq X$ and $B\subseteq Y$ that satisfy $\norm{\Phi(\chi_A)\chi_B\Phi(1)}>\varepsilon$. Then there exists $D\subseteq B$ with $ |D|\leq k$ such that $\|\Phi(\chi_A)\chi_D\Phi(1)\|>\eps/4$. Since 
\[
\|\Phi(\chi_A)\chi_D\Phi(1)\|= \|\chi_{A}\Psi(\chi_D)\|,
\]
 our choice of $k_2$ implies that there exists $C\subseteq A$
with $|C|\leq k$ such that 
\[
\|\chi_{C}\Psi(\chi_D)\|>\e/4-\e/16>\e/8.
\]
Hence,   we have that
\[
\|\Phi(\chi_C)\chi_D\Phi(1)\|= \|\chi_{C}\Psi(\chi_D)\|>\e/8,
\]
and therefore \eqref{1.prop:largesets} follows. The proof of \eqref{2.prop:largesets} is analogous, so we omit it. 
\end{proof}

Given $A\subseteq X$ and $B\subseteq Y$, we define 
\[
X_{B,\delta}=\bigcup_{y\in B}X_{y,\delta}\ \text{ and }\ Y_{A,\delta}=\bigcup_{x\in A}Y_{x,\delta}.
\] 
In the metric case, the following corresponds to \cite[Lemma 7.6]{BragaFarahVignati2018}. 

\begin{lemma}\label{lemma:injective}
In the setting of Assumption \ref{Assumption}: Suppose further that both $X$ and $Y$ have property $A$. Then for every $\eps>0$ there is $\delta>0$ such that the following holds
\begin{enumerate}
\item \label{lemma:injective.1}
$\norm{\Phi(\chi_A)(1-\chi_{Y_{A,\delta}})}<\varepsilon$ 
for all $A\subseteq X$. 
\item \label{lemma:injective.2} $\norm{(1-\chi_{X_{B,\delta}})\Psi(\chi_B)}<\varepsilon$
for all $B\subseteq Y$. 
\item 	\label{lemma:injective.3} If $\e<1$ then $|A|\leq|Y_{A,\delta}|$ for all $A\subseteq X$. 
\end{enumerate}
Moreover, if $\Phi$ is an isomorphism, then $\delta$ can also be chosen such that $|B|\leq |X_{B,\delta}|$ for all $B\subseteq Y$.
\end{lemma}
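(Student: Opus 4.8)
The plan is to treat items \eqref{lemma:injective.1} and \eqref{lemma:injective.2} as the substantive content, obtaining each from Proposition~\ref{prop:largesets} by a contradiction that localizes a large ``mass'' down to a single pair $(x,y)$, and then to read off \eqref{lemma:injective.3} and the ``moreover'' clause from \eqref{lemma:injective.1} and \eqref{lemma:injective.2} by a dimension count. Throughout I fix $\eps\in(0,1)$ (this loses nothing, since the conclusions for a given $\eps\ge 1$ follow from those for $\eps=1/2$), and I will produce one $\delta>0$; since shrinking $\delta$ enlarges every $Y_{A,\delta}$ and $X_{B,\delta}$ and hence only weakens the four displayed inequalities, it suffices to find a $\delta$ that works for each clause separately and take the minimum.

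For \eqref{lemma:injective.2}, set $A=X\setminus X_{B,\delta}$, so that $(1-\chi_{X_{B,\delta}})\Psi(\chi_B)=\chi_A\Psi(\chi_B)$, and suppose toward a contradiction that its norm is at least $\eps$. Let $k$ be the constant furnished by Proposition~\ref{prop:largesets} for the threshold $\eps$, and put $\delta=\eps/(8k^2)$. By Proposition~\ref{prop:largesets}\eqref{2.prop:largesets} there are $C\subseteq A$ and $D\subseteq B$ with $|C|,|D|\le k$ and $\norm{\chi_C\Psi(\chi_D)}>\eps/8$. Expanding $\chi_C=\sum_{x\in C}e_{xx}$ and $\chi_D=\sum_{y\in D}e_{yy}$, the operator $\chi_C\Psi(\chi_D)$ is a sum of at most $k^2$ terms $e_{xx}\Psi(e_{yy})$, so some pair $(x,y)\in C\times D$ satisfies $\norm{\Psi(e_{yy})e_{xx}}=\norm{e_{xx}\Psi(e_{yy})}>\eps/(8k^2)=\delta$. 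Thus $x\in X_{y,\delta}\subseteq X_{B,\delta}$ (as $y\in D\subseteq B$), contradicting $x\in C\subseteq A=X\setminus X_{B,\delta}$.

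Item \eqref{lemma:injective.1} is the same argument run through $\Phi$, except that Proposition~\ref{prop:largesets}\eqref{1.prop:largesets} is phrased with the extra factor $\Phi(1)$, whereas the quantity to be bounded, $\norm{\Phi(\chi_A)(1-\chi_{Y_{A,\delta}})}=\norm{\Phi(\chi_A)\chi_B}$ with $B=Y\setminus Y_{A,\delta}$, carries no such factor. The bridge --- and the one point that needs care --- is the inequality
\[
\norm{\Phi(\chi_A)\chi_B}^2=\norm{\chi_A\Psi(\chi_B)\chi_A}\le\norm{\chi_A\Psi(\chi_B)}=\norm{\Phi(\chi_A)\chi_B\Phi(1)},
\]
whose outer equalities come from writing $\Phi=\Ad U$ and $\Psi=\Ad U^*$ (so that $\norm{\Phi(\chi_A)\chi_B}^2=\norm{U(\chi_A\Psi(\chi_B)\chi_A)U^*}=\norm{\chi_A\Psi(\chi_B)\chi_A}$, using that $U$ is an isometry) together with the identity $\norm{a\Psi(b)}=\norm{\Phi(a)b\Phi(1)}$ recorded after Assumption~\ref{Assumption}, and whose middle inequality is just $\norm{\chi_A}\le 1$. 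Hence if $\norm{\Phi(\chi_A)\chi_B}\ge\eps$ then $\norm{\Phi(\chi_A)\chi_B\Phi(1)}\ge\eps^2$; taking $k$ from Proposition~\ref{prop:largesets} for threshold $\eps^2$ and $\delta=\eps^2/(8k^2)$, part \eqref{1.prop:largesets} yields small $C\subseteq A$ and $D\subseteq B$ with $\norm{\Phi(\chi_C)\chi_D\Phi(1)}>\eps^2/8$, and localizing to a single term gives $(x,y)\in C\times D$ with $\norm{\Phi(e_{xx})e_{yy}\Phi(1)}>\delta$, that is, $y\in Y_{x,\delta}\subseteq Y_{A,\delta}$, contradicting $y\in D\subseteq B$.

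Finally, \eqref{lemma:injective.3} and the moreover clause are a dimension count. Fix the $\delta$ above with $\eps<1$. Since $\Phi$ is a $^*$-homomorphism, $p=\Phi(\chi_A)=U\chi_AU^*$ is a projection whose range $U(\ell_2(A))$ has Hilbert dimension $|A|$, and \eqref{lemma:injective.1} gives $\norm{p(1-\chi_{Y_{A,\delta}})}<\eps<1$. For a unit vector $\xi$ in $\ran(p)$ we then have $\norm{(1-\chi_{Y_{A,\delta}})\xi}=\norm{(1-\chi_{Y_{A,\delta}})p\xi}<1$, so $\chi_{Y_{A,\delta}}\xi\ne 0$; thus $\chi_{Y_{A,\delta}}$ is injective on $\ran(p)$, which forces $|A|\le|Y_{A,\delta}|$. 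When $\Phi$ is an isomorphism, $U$ is unitary and $\Psi=\Phi^{-1}$ is a $^*$-homomorphism, so $\Psi(\chi_B)=U^*\chi_BU$ is a projection of rank $|B|$; applying \eqref{lemma:injective.2} in the same way shows $\chi_{X_{B,\delta}}$ is injective on $\ran\Psi(\chi_B)$ and $|B|\le|X_{B,\delta}|$. The main obstacle is precisely the $\Phi(1)$ mismatch handled by the displayed inequality in the previous paragraph; everything else is the localization bookkeeping of Proposition~\ref{prop:largesets} together with Lemma~\ref{L.XyYx} and a routine injectivity argument.
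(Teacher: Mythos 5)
Your proof is correct and follows essentially the same route as the paper's: both rest on Proposition~\ref{prop:largesets}, a pigeonhole localization of $\norm{\Phi(\chi_C)\chi_D\Phi(1)}$ (resp.\ $\norm{\chi_C\Psi(\chi_D)}$) to a single pair $(x,y)$, the $\Cstar$-identity $\norm{T}^2=\norm{TT^*}$ to bridge the $\Phi(1)$ mismatch, and a rank/injectivity count for clause \eqref{lemma:injective.3} and the moreover part. The only differences are cosmetic: you prove \eqref{lemma:injective.2} first and apply the squaring trick up front with threshold $\eps^2$ (arguably cleaner $\eps$-bookkeeping than the paper's $\sqrt{\eps}$), and you handle the moreover clause directly via the rank of $\Psi(\chi_B)$ rather than by invoking the lemma for $\Phi^{-1}$.
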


\begin{proof}
Let $k>0$ be given by Proposition~\ref{prop:largesets}. 
 Then for all $A\subseteq X$ and $B\subseteq Y$ such that $\norm{\Phi(\chi_A)\chi_B\Phi(1)}\geq \varepsilon$ there are $C\subseteq A$ and $D\subseteq B$ of size $\leq k$ that satisfy $\norm{\Phi(\chi_C)\chi_D\Phi(1)}>\sqrt{\e}/8$. Let $\delta=\frac{\sqrt{\e}}{8k^2}$. 

\eqref{lemma:injective.1} Fix $A\subseteq X$ and $B\subseteq Y$, and let $C$ and $D$ be given as above. Enumerate $C$ and $D$  as $C=\{c_i\}_{i\leq k'}$, $D=\{d_i\}_{i\leq k''}$ with $k',k''\leq k$. Then 
\[
\delta k^2=\sqrt{\e}/8<\sum_{i\leq k'}\sum_{j\leq k''}\norm{\Phi(\chi_{c_i})\chi_{d_j}\Phi(1)}.
\]
Hence, there must exist $i\in \{1,\ldots,k'\}$ and $j\in \{1,\ldots,k''\}$ such that $\norm{\Phi(\chi_{c_i})\chi_{d_j}}>\delta$, i.e., $d_j\in Y_{c_i,\delta}$. Since $c_i\in A$, we have $d_j\in B\cap Y_{A,\delta}$. We have shown that $\norm{\Phi(\chi_A)\chi_B\Phi(1)}\geq \sqrt{\e}$ implies $B\cap Y_{A,\delta}\neq\emptyset$, so $\norm{\Phi(\chi_A)(1-\chi_{Y_{A,\delta}})\Phi(1)}<\sqrt{\e}$. Note that 
\begin{multline*}
\norm{\Phi(\chi_A)(1-\chi_{Y_{A,\delta}})}^2=\norm{\Phi(\chi_A)(1-\chi_{Y_{A,\delta}})\Phi(\chi_A)}\\
\leq \norm{\Phi(\chi_A)(1-\chi_{Y_{A,\delta}})\Phi(1)}<\sqrt{\e}, 
\end{multline*}
and 
\eqref{lemma:injective.1} follows. 

The proof of \eqref{lemma:injective.2} is analogous to the one of \eqref{lemma:injective.1}, so we omit it. 
Clause \eqref{lemma:injective.3} is an immediate consequence of \eqref{lemma:injective.1}, since the rank of $\chi_A$ is equal to~$|A|$ and the rank of $\chi_{Y_{A,\delta}}$ is equal to $|Y_{A,\delta}|$. 

For the moreover part, we just note that if $\Phi$ is an isomorphism, then we can apply \eqref{lemma:injective.1} and \eqref{lemma:injective.2} to the isomorphism $\Phi^{-1}$ (e.g., the argument at the end of \cite[Lemma 7.6]{BragaFarahVignati2018}).
\end{proof}

We are ready to prove Theorem~\ref{TheoremMainIsomor}.

\begin{proof}[Proof of Theorem \ref{TheoremMainIsomor}]
As explained  in the paragraph following Theorem \ref{TheoremMainIsomor}, we only need to prove the implication \eqref{2.Main} $\Rightarrow$ \eqref{1.Main}. Assume $X$ and $Y$ are uniformly locally finite, $Y$ has property A, and $\Phi\colon \cstu(X)\to \cstu(Y)$ is an isomorphism. By Theorem \ref{ThmNuclearPropA}, $\cstu(Y)$ is nuclear, and so is $\cstu(X)$, being isomorphic to it. In particular, again by Theorem~\ref{ThmNuclearPropA}, $X$ has property A. We are in the setting of Assumption~\ref{Assumption} where $\Psi=\Phi^{-1}$ and $\Phi(1)=1$. Hence, the hypotheses of Lemma~\ref{lemma:injective} are satisfied for both $\Phi$ and $\Phi^{-1}$. 

Pick $\eps<\frac{1}{2}$. Let $\delta>0$ be given by Lemma~\ref{lemma:injective}, such that $|A|\leq |Y_{A,\delta}|$ and $|B|\leq|X_{B,\delta}|$ for all $A\subseteq X$ and $B\subseteq Y$. Applying Hall's marriage theorem (\S\ref{S.Graphs}), we can construct injective maps $f\colon X\to Y$ and $g\colon Y\to X$ such that $f(x)\in Y_{x,\delta}$ and $g(y)\in X_{y,\delta}$ for all $x\in X$ and $y\in Y$. 
By Lemma~\ref{lemma:injective}, the hypotheses of Lemmas~\ref{lemma:entourages} and~\ref{lemma:coarse} are satisfied for both $\Phi$ and $\Phi^{-1}$, so both $f$ and $g$ are coarse and expanding. Moreover, we have that $\bigcup_{x\in X}(x,g\circ f(x))$ is an entourage since it is contained in $\bigcup_{x\in X}X_{f(x),\delta}\times X_{f(x),\delta}$, which is an entourage by Lemma~\ref{lemma:entourages}. Therefore $\mathrm{Id}_X$ and $g\circ f$ are close. Similarly, $\bigcup_{x\in X}(y,f\circ g(y))$ is an entourage, so $\mathrm{Id}_X$ and $f\circ g$ are close to each other. This show that $f$ and $g$ are injective coarse equivalences.
 
 By the standard proof of Cantor--Berstein's Theorem we can construct a bijection $h\colon X\to Y$ such that for all $x\in X$ we have that $h(x)=f(x)$ or $h(x)=g^{-1}(x)$. Such bijection is then close to $f$, and $h^{-1}$ is close to $g$, hence $h$ is a bijective coarse equivalence between $X$ and $Y$, and we are done.
\end{proof}

\begin{proof}[Proof of Theorem \ref{TheoremMainEmbed}]

The implication \eqref{ItemTheoremMainEmbed1} $\Rightarrow$ \eqref{ItemTheoremMainEmbed2}  is straightforward. Indeed, if $f\colon X\to Y$ is an injective coarse embedding, let $U\colon\ell_2(X)\to \ell_2(Y)$ be the isometry defined by $U\delta_x=\delta_{f(x)}$ and define $\Phi\colon\cstu(X)\to \cstu(Y)$ by $\Phi(a)=UaU^*$ for all $a\in \cstu(X)$; this $\Phi$ is the required embedding. Notice that this implication does not require any geometric assumption on the spaces of interest.

\eqref{ItemTheoremMainEmbed2} $\Rightarrow$ \eqref{ItemTheoremMainEmbed1} Let  $\Phi\colon \cstu(X)\to \cstu(Y)$ is an embedding onto a hereditary \cstar-subalgebra of $\cstu(Y)$ and assume that $Y$ has property A. As nuclearity passes to hereditary \cstar-subalgebras, Theorem \ref{ThmNuclearPropA} implies that $(X,\cE)$ also has property A, so we are in the setting of Assumption  \ref{Assumption}. 

Let us note that  $X$ injectively coarsely embeds into $Y$. Since both $(X,\cE)$ and $(Y,\cE')$  have property A, Lemma \ref{lemma:injective} applies, and we can pick $\delta>0$ such that $|A|\leq |Y_{A,\delta}|$ for all $A\subseteq X$. By Hall's Marriage Theorem, there is an injective $f\colon X\to Y$ such that $f(x)\in Y_{x,\delta}$ for all $x\in X$. Since Lemma~\ref{lemma:injective} applies, the hypotheses of Lemma~\ref{lemma:coarse} are satisfied for $\Phi$. Hence $f$ is coarse and expanding. This finished the proof.
\end{proof}

\subsection{The case of only compact ghost projections}
We finish this section obtaining rigidity under a geometric condition which is strictly weaker than property A -- unfortunately, we do not know if this geometric condition gives us strong rigidity, e.g., that $X$ and $Y$ are bijectively coarse equivalent. For that, we note that in order to apply Lemmas~\ref{lemma:entourages} and ~\ref{lemma:coarse}, we only need the thesis of Lemma~\ref{lemma:injective} for singletons. 

\begin{lemma}\label{lemma:injections2}
In the setting of Assumption \ref{Assumption}: Suppose further that $\cstu(X)$ and $\cstu(Y)$ have only compact ghost projections. Let $\eps>0$. Then there is $\delta>0$ such that for all $x\in X$ and $y\in Y$, 
\[
\norm{ (1-\chi_{X_{y,\delta}})\Psi(e_{yy})}, \norm{(1-\chi_{Y_{x,\delta}})\Phi(e_{xx})}<\eps.
\]
\end{lemma}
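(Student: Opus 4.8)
The plan is to reduce the statement to the countably generated case (equivalently, by Lemma~\ref{L.infty-valued-metric}, the metrizable case), where the corresponding column-concentration estimate is already available, and then transfer it back by a reflection argument in the spirit of \S\ref{SectionPropA}. Three observations make this possible. First, both the sets $X_{y,\delta}$, $Y_{x,\delta}$ of Lemma~\ref{L.XyYx} and the norms $\norm{(1-\chi_{X_{y,\delta}})\Psi(e_{yy})}$, $\norm{(1-\chi_{Y_{x,\delta}})\Phi(e_{xx})}$ are computed from the single isometry $U$ (through $\Phi=\Ad U$ and $\Psi=\Ad U^*$) and are therefore insensitive to which coarse structure we regard the operators as living in; they only require $\Phi(e_{xx})$, $\Psi(e_{yy})$ and the diagonal projections $\chi_{X_{y,\delta}}$, $\chi_{Y_{x,\delta}}$ to lie in the ambient uniform Roe algebra. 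Second, the hypothesis ``only compact ghost projections'' passes to reducts: if $\cF\subseteq\cE$ then $\cstu(X,\cF)\subseteq\cstu(X,\cE)$ by Proposition~\ref{PropInclusionCoarseStructure}, and a ghost projection of the smaller algebra is a ghost projection of the larger one, hence compact. Third, for each \emph{fixed} $y$ one has $X_{y,\delta}\uparrow\supp(\zeta_y)$ as $\delta\downarrow 0$, where $\zeta_y=U^*\delta_y$ and $\langle\Psi(e_{yy})\delta_x,\delta_x\rangle=|\langle\zeta_y,\delta_x\rangle|^2$, so the desired inequality holds for any single $y$; the whole content of the lemma is the \emph{uniformity} of $\delta$ over $x$ and $y$.

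Accordingly I would argue by contradiction. If the $\Psi$-side fails, fix $\eps>0$ and sequences $\delta_n\downarrow 0$, $y_n\in Y$ with $\norm{(1-\chi_{X_{y_n,\delta_n}})\Psi(e_{y_ny_n})}\geq\eps$; by the third observation the $y_n$ may be taken distinct. These are countably many witnesses, so by a L\"owenheim--Skolem/closing-off argument as in \S\ref{SectionPropA} (using Lemma~\ref{L.reflection}) one chooses countably generated reducts $\cF\subseteq\cE$ and $\cF'\subseteq\cE'$ such that (a) $\Phi$ restricts to an isomorphism of $\cstu(X,\cF)$ onto a hereditary \cstar-subalgebra of $\cstu(Y,\cF')$ — so that the restricted triple $(\Phi,\Psi,U)$ again satisfies Assumption~\ref{Assumption} — and (b) $\Psi(e_{y_ny_n})\in\cstu(X,\cF)$ for every $n$. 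By the second observation both reducts still have only compact ghost projections.

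Now $(X,\cF)$ and $(Y,\cF')$ are countably generated, hence metrizable, so the countably generated case of the lemma yields a single $\delta'>0$ with $\norm{(1-\chi_{X_{y,\delta'}})\Psi(e_{yy})}<\eps$ for \emph{all} $y\in Y$; by the first observation this norm is the same whether computed in $\cstu(X,\cF)$ or in $\cstu(X,\cE)$. For $n$ large enough that $\delta_n<\delta'$ we have $X_{y_n,\delta'}\subseteq X_{y_n,\delta_n}$, whence $\norm{(1-\chi_{X_{y_n,\delta_n}})\Psi(e_{y_ny_n})}\leq\norm{(1-\chi_{X_{y_n,\delta'}})\Psi(e_{y_ny_n})}<\eps$, contradicting the choice of the witnesses. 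The $\Phi$-side is symmetric, now invoking the compact-ghost-projection hypothesis on $\cstu(Y)$; the only subtlety is that $\Phi(1)$ need not equal $1$, but since $\Phi(e_{xx})\leq\Phi(1)$ one gets $|\langle\xi_x,\delta_y\rangle|^2\leq|\langle\xi_x,\delta_y\rangle|\,\norm{\Phi(1)\delta_y}\leq\delta$ for $y\notin Y_{x,\delta}$ (with $\xi_x=U\delta_x$), so each column's ``tail'' again has uniformly small coefficients, exactly as on the $\Psi$-side. I expect the main obstacle to be step (a) of the closing-off argument — verifying that $\Phi$ genuinely restricts to an isomorphism onto a hereditary subalgebra between the countable reducts, so that Assumption~\ref{Assumption}, and hence the countably generated case, is available there; this is precisely the reflection technology of \S\ref{SectionPropA}. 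The countably generated case itself rests on producing, from a sparse subsequence of the spread-out columns above, a noncompact ghost projection in the uniform Roe algebra, and it is the membership of that projection in the algebra — available only once the space is metrizable and sparse subsequences exist — that is the genuinely hard point, which is exactly why the reduction to a countably generated reduct is the natural route.
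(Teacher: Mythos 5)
There is a genuine gap, and it sits exactly where you flagged it: step (a) of your closing-off argument. The reflection technology of \S\ref{SectionPropA} reflects \emph{properties of a single coarse space} (property A, ONL, compactness of ghosts), i.e.\ statements quantified over operators of the algebra one at a time; it does not reflect a \emph{morphism}. To restrict $\Phi$ to an isomorphism of $\cstu(X,\cF)$ onto a hereditary subalgebra of $\cstu(Y,\cF')$ with $\cF,\cF'$ countably generated, you would need $\Phi(\cstu(X,\cF))\subseteq\cstu(Y,\cF')$ and, for hereditariness, also $\Psi(\cstu(Y,\cF'))\subseteq\cstu(X,\cF)$. But $\cstu(X,\cF)$ is non-separable (it contains $\ell_\infty(X)$), so the L\"owenheim--Skolem closing-off, which handles only countably many (or separably many) witnesses at a time, does not apply: $\Phi$ need not carry the diagonal to the diagonal, so the supports of the continuum many projections $\Phi(\chi_A)$, $A\subseteq X$, need not be captured by any countably generated reduct of $\cE'$, and Lemma~\ref{L.reflection} only places each single operator in \emph{some} countably generated reduct. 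Indeed, if such morphism-reflection were available, every rigidity theorem of this paper would follow trivially from the known metrizable case, whereas the nonmetrizable case is precisely the open problem the paper addresses. A secondary issue: your description of the countably generated case rests on extracting a \emph{sparse} subsequence, a device the paper deliberately avoids (see \S\ref{ss:geom}) because it does not survive the passage to general coarse structures and because the whole point here is to have an argument that makes no appeal to metrizability.

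For comparison, the paper's proof of Lemma~\ref{lemma:injections2} is direct and never reduces to reducts. It proves a stronger statement for any rank-decreasing strongly continuous positive contraction $\Theta$ with $\Theta(a)=\Theta(1)\Theta(a)$: assuming failure, one gets witnesses $x_n$ with $\norm{(1-\chi_{Y_{x_n,1/n}})\Theta(e_{x_nx_n})}>\eps$; Lemma~\ref{L:AA} disjointifies finite supporting sets $C_n$, and one forms the rank-one positive operators $a_n=\chi_{D_n}\Theta(e_{x_nx_n})^2\chi_{D_n}$ with $D_n=C_n\cap(Y\setminus Y_{x_n,1/n})$. The sum $\sum_{n\in M}a_n$ lies in $\cstu(Y)$ for every $M\subseteq\N$ because it differs from $\chi_D\Theta(\chi_{\{x_n\}_{n\in M}})\chi_D$ by a compact, and it is a ghost \emph{not} because the $D_n$ are sparse but because $D_n\cap Y_{x_n,1/n}=\emptyset$ forces all matrix entries of $a_n$ to vanish uniformly as $n\to\infty$. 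Passing to a subsequence along which $\norm{a_n}$ converges and subtracting a suitable compact element of $\cstu(Y)$ produces a noncompact ghost projection in $\cstu(Y)$, contradicting the hypothesis. This substitution of ``uniformly vanishing entries'' for ``sparseness'' is the key idea your reduction strategy misses, and it is what makes the lemma provable without ever invoking a metric.
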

\begin{proof}
We prove the following   stronger statement: under the same  geometrical hypotheses on $\cstu(X)$ and $\cstu(Y)$, if $\Theta\colon\cstu(X)\to\cstu(Y)$ is a rank decreasing strongly continuous positive contraction such that $\Theta(a)=\Theta(1)\Theta(a)$ for all $a\in \cstu(X)$ and $\eps>0$ is given, then we can find $\delta>0$ such that for all $x\in X$ we have that
\[
\norm{(1-\chi_{Y_{x,\delta}})\Theta(e_{xx})}<\eps,
\]
where throughout this proof we use the notation
\[
Y_{x,\delta}=\Big\{y\in Y\mid \norm{ \Theta(e_{xx})e_{yy}\Theta(1)} >\delta\Big\}.
\]

Fix $\eps>0$ and suppose the thesis does not hold. For each $x\in X$, let $Z_x=\bigcup_\delta Y_{x,\delta}$. Note that $\chi_{Z_x}\Theta(e_{xx})=\Theta(e_{xx})$, as if $\norm{\Theta(e_{xx})e_{yy}}>0$ for some $y$, then 
\[
0<\norm{(\Theta(e_{xx})e_{yy})(\Theta(e_{xx})e_{yy})^*}=\norm{\Theta(e_{xx})e_{yy}\Theta(e_{xx})}\leq\norm{\Theta(e_{xx})e_{yy}\Theta(1)}
\]
and therefore $y\in Z_x$. Since each $\Theta(e_{xx})$ is compact, every $x\in X$ satisfies $\norm{\chi_{Y_{x,\delta}}\Theta(e_{xx})-\Theta(e_{xx})}\to 0$ as $\delta\to 0$. If the thesis does not hold, then there is a sequence $\{x_n\}_n\subseteq X$ such that $\|(1-\chi_{Y_{x_n,1/n}})\Theta(e_{x_nx_n})\|>\eps$ for all $n\in \N$. Since $\Theta$ is positive, then 
\[
\norm{(1-\chi_{Y_{x_n,1/n}})\Theta(e_{x_nx_n})^2(1-\chi_{Y_{x_n,1/n}})}>\eps^2.
\]
For each $n$, let $C_n\subseteq Y$ be a finite set such that 
\[
\max\{\norm{(1-\chi_{C_n})\Theta(e_{x_nx_n})}, \norm{\Theta(e_{x_nx_n})(1-\chi_{C_n})}\}<2^{-n}.
\]
By applying Lemma~\ref{L:AA} twice and passing to a subsequence we can assume that the $C_n$'s are disjoint, and that $\norm{\chi_{C_n}\Theta(\chi_B)}<2^{-n}$ whenever $B\subseteq \{x_m\}_{m\neq n}$. Let $Z=\{x_n\mid n\in\N\}$, $D_n=C_n\cap (Y\setminus Y_{x_n,1/n})$, $D=\bigcup_n D_n$ and $a_n=\chi_{D_n}\Theta(e_{x_nx_n})^2\chi_{D_n}$. Note that 
\[
\chi_D\Theta(\chi_Z)^2\chi_D-\sum_n a_n\in\mathcal K(\ell_2(Y)),
\]
so $\sum_n a_n$ belongs to $\cstu(Y)$. By exactly the same argument, we have that for every $M\subseteq\N$, the element 
\[
\chi_D\Theta(\chi_{\{x_n\}_{n\in M}})\chi_D-\sum_{n\in M} a_n\in\mathcal K(\ell_2(Y)),
\]
 hence $\sum_{n\in M}a_n$ belongs to $\cstu(Y)$. As $\norm{(1-\chi_{C_n})\Theta(e_{x_nx_n})}<2^{-n}$ for all $n$ we have that for all but finitely many $n$, $\norm{a_n}>\eps^2/2$. As $\Theta(e_{x_nx_n})^2$ is a  positive rank 1 operator, so is  $a_n$. More than that, as $D_n\cap Y_{x_n,1/n}=\emptyset$ and $a_n$ is the cut down of $\Theta(e_{x_nx_n})^2$ to the entries $(y,y')$ where $y,y'\in D_n$, we have that for all $\gamma>0$ there are only finitely many $n$'s such that there are $y,y'\in D_n$ for which $\norm{e_{yy}a_ne_{y'y'}}>\gamma$. As the supports of $a_n$ and $a_m$ are disjoint for each $n\neq m$, we have that $\sum_n a_n$ is a ghost. 
 
Let $\delta>\eps^2$ be a point of accumulation of the sequence $\norm{a_n}$; such a $\delta$ exists as $\norm{a_n}\in [\eps^2/2,1]$ for all $n$. Pick a subsequence $(a_{n_k})_k $ such that $|\norm{a_{n_k}}-\delta|<2^{-k}$ for all $k\in\N$. Then $\frac{1}{\delta}a_{n_k}$ is a rank $1$ positive element whose spectrum is $\{0,\gamma_k\}$, where $\gamma_k\to 1$ as $k\to \infty$. As all the $a_n$'s have disjoint support, then there is a compact element $d\in\mathcal K(\ell_2(Y))$ such that $\frac{1}{\delta}\sum_k a_{n_k}-d$ is a noncompact projection. More than that, such a $d$ can be found to have support equal to the union of the supports of $a_{n_k}$, and therefore can be found such that $d\in \mathcal K(\ell_2(Y))\cap\cstu(Y)$.  As $d\in\cK(\ell_2(Y))$ and $\frac{1}{\delta}\sum_k a_{n_k}$, then $ \frac{1}{\delta}\sum_k a_{n_k}-d$ is a noncompact ghost projection, and as $d\in\cstu(Y)$, then $\frac{1}{\delta}\sum_k a_{n_k}-d$ belongs to $\cstu(Y)$. This is a contradiction.
 
This completes the proof of the stronger claim. Since $\Phi$ and $\Psi$ satisfy the hypothesis of the claim, we are done.
\end{proof}

We are now able to prove our `weak rigidity' results.
\begin{proof}[Proof of Theorem~\ref{TheoremMainIsomor2}]
Let $X$ and $Y$ be u.l.f. coarse spaces such that all ghost projections in $\cstu(X)$ and $\cstu(Y)$ are compact. Suppose that $\Phi$ is an isomorphism. By Lemma~\ref{lemma:injections2} there is $\delta>0$ such that each $Y_{x,\delta}$ and each $X_{y,\delta}$ is nonempty. Let $f\colon X\to Y$ and $g\colon Y\to X$ be maps such that $f(x)\in Y_{x,\delta}$ and $g(y)\in X_{y,\delta}$ for all $x\in X$ and $y\in Y$. By Lemma~\ref{lemma:injections2}, the hypotheses of Lemma~\ref{lemma:coarse} are satisfied, therefore $f$ and $g$ are coarse and expanding. Moreover, as in the proof of Theorem \ref{TheoremMainIsomor},  Lemma~\ref{lemma:entourages} implies that $\mathrm{Id}_X$ is close to $g\circ f$, and $\mathrm{Id}_Y$ is close to $f\circ g$. Hence $X$ and $Y$ are coarsely equivalent.
\end{proof}

\begin{proof}[Proof of Corollary~\ref{cor:metr}]
Suppose that $\cstu(X)$ and $\cstu(Y)$ have only compact ghost projections, and they are isomorphic. By Theorem~\ref{TheoremMainIsomor2}, $(X,\cE)$ and $(Y,\cE')$ are coarsely equivalent. By Lemma~\ref{LemmaSize}, if $\cE$ is $\lambda$-generated, so is $\cE'$. This concludes the proof.
\end{proof}

We end this section with the proof of Theorem~\ref{TheoremMainIsomorMorita} and Theorem~\ref{thm:embednoghostproj}.
\begin{proof}[Proof of Theorem~\ref{TheoremMainIsomorMorita}]
Suppose that $(X,\mathcal E)$ and $(Y,\mathcal E')$ are uniformly locally finite coarse spaces and assume that all ghost projections in both $\cstu(X,\cE)$ and $\cstu(Y,\cE')$ are compact. We need to prove that if $\cstu(X,\cE)$ and $\cstu(Y,\cE')$ are stably isomorphic then 
$(X,\cE)$ and $(Y,\cE') $ are coarsely equivalent.
Let $\csts(X,\cE)$ denote the \emph{stable Roe algebra of $(X,\cE)$}, i.e., $\csts(X,\cE)=\cstu(X,\cE)\otimes \cK(H)$, where $H$ is an infinite dimensional Hilbert space. 
 Let $\Phi\colon\csts(X,\cE)\to \csts(Y,\cE')$ be an isomorphism. Let $q\in M_1(\C)\subseteq \cK(H)$ be a rank 1 projection and $1_X\in \cstu(X)$ be the identity. For any $\eps>0$, there exists $n\in\N$ large enough such that, letting $p$ be the unit of $\cstu(Y)\otimes M_n(\C)$, we have that \[\|\Phi(1_X\otimes q)-p\Phi(1_X\otimes q)p\|\leq \eps.\]
Choosing $\eps$ small enough, standard functional calculus allows us to pick a projection $p'\in \cstu(Y)\otimes M_n(\mathbb C)$ such that $\|\Phi(1_X\otimes q)-p'\|<1$. By a well-known result (e.g., \cite[Lemma 1.5.7]{Fa:STCstar}), there exists a unitary $u\in \csts(Y)$ such that $u\Phi(1_X\otimes q)u^*=p'$. Define $\Psi=\mathrm{Ad}(u)\circ \Phi$, so $\Psi$ is an isomorphism between $\csts(X)$ and $\csts(Y)$. Hence, $\Psi\restriction \cstu(X)\otimes M_1(\C)$ is an embedding of $\cstu(X) = \cstu(X)\otimes M_1(\C)$ onto a hereditary subalgebra of $\cstu(Y)\otimes M_n(\C)$.

Let $Y'=Y\times \{1,\ldots, n\}$. We define a coarse structure on $Y'$ by setting $E\subseteq Y'\times Y'$ to be an entourage if its projection on $Y\times Y$ belongs to $\cE'$, where $Y$ is identified with $Y\times \{1\}$. With this identification, we see  $(Y,\cE')$ as a coarse subspace of $Y'$, and, by abuse of notation, we still denote the coarse structure of $Y'$ by $\cE'$. Clearly, $(Y',\cE')$ was constructed such that there is a canonical isomorphism between $\cstu(Y)\otimes M_n(\C)$ and $ \cstu(Y')$.
With the identifications above, consider 
\[
\Psi\restriction \cstu(X)\otimes M_1(\C)\colon\cstu(X)\to \cstu(Y').
\]
Since all ghost projections in $\cstu(Y')$ are compact, the hypotheses of Lem\-ma~\ref{lemma:coarse} are satisfied, and therefore there is $\delta>0$ and a coarse embedding $f\colon X\to Y'$ such that $f(x)\in Y_{x,\delta}$ for all $x\in X$, where $Y_{x,\delta}=Y_{x,\delta,\Psi}$ is defined as in Lemma~\ref{L.XyYx}.

Let $1_{Y'}\in \cstu(Y')\cong \cstu(Y)\otimes M_n(\C)$ be the identity, i.e., $1_{Y'}=1_Y\otimes 1_n$, where $1_Y$ and $1_n$ are the identities of $\cstu(Y)$ and $M_n(\C)$, respectively. Similarly as above, there exists $k\in\N$ and a projection $p''\in \cstu(X)\otimes M_k(\C)$ such that $\|\Psi^{-1}(1_{Y'})-p''\|<\delta/12$. Hence, there exists a self-adjoint unitary $v\in \csts(X)$ such that $p''=v\Psi^{-1}(1_{Y'})v$ and 
\begin{equation}\label{Eq157}
\|\Psi^{-1}(e_{yy} )-v\Psi^{-1}(e_{yy} )v\|<\frac{\delta}{2}
\end{equation} for all $y\in Y'$ (e.g., by \cite[Lemma 1.5.7]{Fa:STCstar}). Let $\Theta=\mathrm{Ad}(v)\circ \Psi^{-1}$, so $\Theta$ is an isomorphism between $\csts(Y)$ and $\csts(X)$. Moreover, $\Theta\restriction \cstu(Y)\otimes M_n(\C)$ is an embedding of $\cstu(Y')$ onto a hereditary subalgebra of $\cstu(X)\otimes M_k(\C)\cong\cstu(X')$, where $X'=X\times \{1,\ldots, k\}$. Since all ghost projections in $\cstu(X')$ are compact, we can apply Lemma~\ref{lemma:coarse}, and find a positive $\gamma< \delta/2$ and a coarse embedding $g\colon Y'\to X'$ such that $g(y)\in X_{y,\gamma}$ for all $y\in Y'$, where $ X_{y,\gamma}=X_{y,\gamma,\Theta}$ is defined as in \eqref{EqXdelta} of Lemma~\ref{L.XyYx}.

Let $\pi\colon X'\to X$ be the projection, such that $\pi $ is a coarse equivalence. Let $h=\pi \circ g$, so $h$ is a coarse embedding of $Y'$ into $X$. 

\begin{claim}
The map $h\circ f$ is close to $\mathrm{Id}_{X}$.
\end{claim}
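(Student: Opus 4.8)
The plan is to reduce the claim to an entourage statement inside $X'$ and then transport it to $X$ through the coarse map $\pi$. Writing $x'=g(f(x))$, it suffices to show that $\{((x,1),x')\mid x\in X\}$ is an entourage of $(X',\cE')$: since $\pi$ is coarse with $\pi(x,1)=x$ and $\pi(x')=h(f(x))$, applying $\pi\times\pi$ then gives $\{(x,h(f(x)))\mid x\in X\}\in\cE$, which is exactly the assertion that $h\circ f$ is close to $\mathrm{Id}_X$. To produce the entourage in $X'$ I will place both $(x,1)$ and $x'$ into a common set $Y_{f(x),\mu,\Theta}=\{x''\in X'\mid \norm{\Theta(e_{f(x)f(x)})e_{x''x''}\Theta(1)}>\mu\}$ and invoke Lemma~\ref{lemma:entourages} applied to $\Theta\colon\cstu(Y')\to\cstu(X')$.

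The algebraic identity driving the argument is $\Theta\circ\Psi=\Ad(v)$ on $\cstu(X)$, where $v$ is the self-adjoint unitary of \eqref{Eq157}; in particular $\Theta(\Psi(e_{xx}))=ve_{xx}v$ and $\Theta(\Psi(1))=v(1_X\otimes q)v=:Q$, a contraction. Recall that $\|v-1\|$ is small, of the order of the bound $\|\Psi^{-1}(1_{Y'})-p''\|<\delta/12$ used to construct $v$; in particular $\|v-1\|<\delta-\gamma$. The main step is to show $(x,1)\in Y_{f(x),\mu,\Theta}$ for a fixed $\mu>0$. Since $\Theta$ is an injective $^*$-homomorphism, it is isometric, so applying it to the inequality $\norm{\Psi(e_{xx})e_{f(x)f(x)}\Psi(1)}>\delta$ witnessing $f(x)\in Y_{x,\delta}$ gives $\norm{(ve_{xx}v)\,\Theta(e_{f(x)f(x)})\,Q}>\delta$. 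Writing $P=\Theta(e_{f(x)f(x)})$ and using that $ve_{xx}v=|v\delta_x\rangle\langle v\delta_x|$ has rank one, the left-hand side equals $\|v\delta_x\|\,\|QPv\delta_x\|\le\|Pv\delta_x\|$, whence $\|Pv\delta_x\|>\delta$; since $\|v\delta_x-\delta_x\|\le\|v-1\|$ this yields the single-vector estimate $\|P\delta_{(x,1)}\|>\delta-\|v-1\|$.

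It remains to recover the factor $\Theta(1)$ occurring in the definition of $Y_{f(x),\mu,\Theta}$. Here I use that $e_{f(x)f(x)}\le 1_{\cstu(Y')}$ forces $P=\Theta(e_{f(x)f(x)})\le\Theta(1)$, so $\Theta(1)P=P$ and hence $\|\Theta(1)\delta_{(x,1)}\|\ge\|P\delta_{(x,1)}\|>\delta-\|v-1\|$. Therefore $\norm{\Theta(e_{f(x)f(x)})e_{(x,1)(x,1)}\Theta(1)}=\|P\delta_{(x,1)}\|\,\|\Theta(1)\delta_{(x,1)}\|>(\delta-\|v-1\|)^2$, i.e. $(x,1)\in Y_{f(x),(\delta-\|v-1\|)^2,\Theta}$. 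On the other hand $x'=g(f(x))\in Y_{f(x),\gamma,\Theta}$ by the construction of $g$. Setting $\mu=\min\{\gamma,(\delta-\|v-1\|)^2\}>0$, both $(x,1)$ and $x'$ lie in $Y_{f(x),\mu,\Theta}$, so $((x,1),x')\in\bigcup_{y\in Y'}Y_{y,\mu,\Theta}\times Y_{y,\mu,\Theta}$. Since $\cstu(X')$ and $\cstu(Y')$ have only compact ghost projections, Lemma~\ref{lemma:injections2} verifies the hypothesis of Lemma~\ref{lemma:entourages} for $\Theta$, and that lemma shows the displayed union is an entourage of $X'$. Projecting by $\pi\times\pi$ then completes the proof.

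I expect the delicate point to be precisely the combination of the two inequalities across the projection $P=\Theta(e_{f(x)f(x)})$. As an isomorphism of uniform Roe algebras $\Theta$ is compact-preserving, so $P$ is finite-rank, but there is no reason for it to have rank one; a naive appeal to Lemma~\ref{LemmaRank1Inequality} therefore fails. The device that makes the estimate go through is to keep $ve_{xx}v$, which is genuinely rank one, as the operator to which rank-one reasoning is applied, reducing everything to the scalar inequality $\|P\delta_{(x,1)}\|>\delta-\|v-1\|$, and then to recover the missing $\Theta(1)$-factor from the order relation $P\le\Theta(1)$ rather than from any further norm computation.
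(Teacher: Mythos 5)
Your proof is correct and takes essentially the same route as the paper's: both arguments place $(x,1)$ and $g(f(x))$ in a common set $X_{f(x),\mu,\Theta}$, invoke Lemma~\ref{lemma:injections2} together with Lemma~\ref{lemma:entourages} applied to $\Theta$ to see that $\bigcup_{y\in Y'}X_{y,\mu}\times X_{y,\mu}$ is an entourage of $X'$, and then push this entourage down to $X$ via $\pi\times\pi$. The only divergence is in the bookkeeping of the single estimate showing $(x,1)\in X_{f(x),\mu}$ --- the paper transfers $f(x)\in Y_{x,\delta}$ from $\Psi^{-1}$ to $\Theta$ directly via \eqref{Eq157}, while you apply the isometry $\Theta$ to the witnessing product and perturb the vector $v\delta_{(x,1)}$, recovering the $\Theta(1)$ factor from $\Theta(e_{f(x)f(x)})\le\Theta(1)$; this last point is in fact handled more carefully in your write-up than in the paper's.
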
 

\begin{proof}
By Lemma \ref{lemma:injections2} and Lemma \ref{lemma:entourages} applied to $\Theta\restriction \cstu(Y)\otimes M_n(\C)$, the set $\bigcup_{y\in Y'}X_{y,\gamma}\times X_{y,\gamma}$ is an entourage of $X'$. By the definition of the coarse structure of $X'$, this implies that 
$\bigcup_{y\in Y'}\pi(X_{y,\gamma})\times \pi(X_{y,\gamma})$
is an entourage of $X$. Hence, 
 we only need to notice that \[ (x,h\circ f(x))\in \bigcup_{x}\pi(X_{f(x),\gamma})\times \pi(X_{f(x),\gamma})\] for all $x\in X$. By the definition of $g$, $h\circ f(x)\in \pi(X_{f(x),\gamma})$ for all $x\in X$. Also, as $f(x)\in Y_{x,\delta}$ for all $x\in X$, we have that
\[
\delta\leq \|e_{f(x)f(x)}\Psi(e_{xx}\otimes q)\|\leq \|\Psi^{-1}(e_{f(x)f(x)})e_{xx}\otimes q\|.
\]
By \eqref{Eq157}, this implies that
\[\|\Theta(e_{f(x)f(x)})e_{xx}\otimes q\|\geq \frac{\delta}{2}\]
for all $x\in X$, and we can similarly obtain that $\|e_{xx}\otimes q\Theta(e_{f(x)f(x)})\|\geq \delta/2$ as well. Hence, $x\in \pi(X_{f(x),\gamma})$ for all $x\in X$, and we are done.
\end{proof} 

We can now conclude that $X$ and $Y$ are coarsely equivalent. The previous claim implies that $h\colon Y'\to X$ is cobounded. Hence, as $h$ is a coarse embedding, this shows that $h$ is a coarse equivalence. Therefore, as the projection $Y'\to Y$ is a coarse equivalence, the conclusion follows.
\end{proof}

\begin{proof}[Proof of Theorem~\ref{thm:embednoghostproj}]
Suppose  $(X,\cE)$ coarsely embeds into $(Y,\cE')$, let $f\colon X\to Y$ be such embedding. Expansiveness of $f$  implies that there exists $n\in\N$ so that $f^{-1}(y)$ has at most $n$ elements for all $y\in Y$. For each $y\in Y$, let $n_y=|f^{-1}(y)|$ and enumerate $f^{-1}(y)$, say $f^{-1}(y)=\{z^y_1,\ldots, z^y_{n_y}\}$. Let $Y'=Y\times \{1,\ldots,  n\}$ and endow $Y'$ with a coarse structure just as in the proof of  Theorem \ref{TheoremMainIsomorMorita}. Let $\alpha\colon X\to \{1,\ldots, n\}$ be the map such that $x=z^{f(x)}_{\alpha(x)}$ for all $x\in X$. Then the map 
\[x\in X\mapsto \Big(f(x), z^{f(x)}_{\alpha(x)}\Big)\in Y'\]  
is an injective coarse  embedding.  Hence, $\cstu(X)$ is isomorphic to a hereditary \cstar-subalgebra of $\cstu(Y')=\cstu(Y)\otimes M_n(\C)$. This gives us the implication \ref{Item1:thm:embednoghostproj} $\Rightarrow$ \ref{Item2:thm:embednoghostproj}.

 The implication \ref{Item2:thm:embednoghostproj} $\Rightarrow$ \ref{Item1:thm:embednoghostproj} follows analogously to the proof of Theorem \ref{TheoremMainIsomorMorita}, so we leave the details to the reader. 
\end{proof}

\section{Cartan subalgebras of uniform Roe algebras}\label{SectionAppli}
 Definition~\ref{Def.Cartan} below, isolated in \cite{renault2008cartan} building on the work of  \cite{kumjian1986c}, is the \cstar-algebraic variant of the standard notion from the theory of von Neumann algebras. If $B$ is a \cstar-subalgebra of $A$ then $E\colon A\to B$ is a \emph{conditional expectation} if it is completely positive and satisfies $\Upsilon(bab')=b \Upsilon(a) b'$ for all $a$ in $A$ and all $b$ and $b'$ in $B$ (see e.g.,~\cite[\S3.3]{Fa:STCstar}).

\begin{definition}\label{Def.Cartan}
Let $A$ be a \cstar-algebra and $B\subseteq A$ be a \cstar-subalgebra. We say that~$B$ is a \emph{Cartan subalgebra} of $A$ if
\begin{enumerate}
\item $B$ is a maximal abelian self-adjoint subalgebra (masa) of $A$,
\item $A$ is generated as a \cstar-algebra by the \emph{normalizer} of $B$ in $A$, i.e., 
\[N_{A}(B)=\{u\in A\mid uBu^*\cup u^*Bu\subseteq B\},\]
\item\label{3.cartan} $B$ contains an approximate unit for $A$, and 
\item there is a faithful conditional expectation $\Upsilon\colon A\to B$. 
\end{enumerate}
\end{definition}

If $A$ is unital then \eqref{3.cartan} is clearly redundant. 
In the context of Roe algebras, White and Willett introduced the following in \cite[Definition 4.20]{WhiteWillett2017}.

\begin{definition}\label{DefiRoeSubalg}
Let $A$ and $B$ be \cstar-algebras and assume that $B$ is a Cartan subalgebra of $A$. We say that $(A,B)$ is a \emph{Roe Cartan pair}, or that $B$ is a \emph{Roe Cartan subalgebra of $A$}, if
\begin{enumerate}
\item\label{Item.DefiRoeSubalg1} $B$ is isomorphic as a \cstar-algebra to $\ell_\infty(\N)$, 
\item\label{Item.DefiRoeSubalg2} $A$ contains the \cstar-algebra of compact operators on a separable infinite
dimensional Hilbert space as an essential ideal, and 
\item\label{Item.DefiRoeSubalg3} $B$ is \emph{co-separable in $A$}, i.e., there is a countable set $S\subseteq A$ such that $A=C^*(B,S)$.\footnote{Equivalently, one may require $S\subseteq N_A(B)$.}
\end{enumerate}
\end{definition}

In \cite[Remark~3.4]{WhiteWillett2017}, White and Willett asked the following question: if $X$ is a u.l.f. metric space and $B$ is a Cartan subalgebra of $A=\cstu(X)$ satisfying \eqref{Item.DefiRoeSubalg1} and \eqref{Item.DefiRoeSubalg2} of Definition \ref{DefiRoeSubalg},  is $B$ automatically co-separable? We give a positive answer in the case when $X$ has property $A$. 

\begin{corollary}\label{CorWhillettWhite}
Let $(X,d)$ be a u.l.f. metric space with property $A$. Suppose that $B\subseteq\cstu(X)$ is a Cartan subalgebra of $\cstu(X)$ which is isomorphic to $\ell_\infty(\N)$. Then $B$ is co-separable in $\cstu(X)$.
\end{corollary}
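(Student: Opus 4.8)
The plan is to recognize the Cartan pair $(\cstu(X),B)$ as a uniform Roe algebra equipped with its canonical diagonal, and then to feed this identification into Corollary~\ref{cor:metr}. Write $A=\cstu(X)$ and $H=\ell_2(X)$, and recall $\mathcal K(H)\subseteq A$. The first task is to understand the atoms of $B$. Since $B$ is a masa in the unital algebra $A$, we have $1_A\in B$. Let $p$ be a minimal projection (atom) of $B\cong\ell_\infty(\mathbb N)$, so $pBp=\mathbb Cp$. For $b\in B$ one has $pbp=bp=pb\in pBp=\mathbb Cp$, say $pbp=\lambda_b p$; hence for any $x\in pAp$, using $px=xp=x$, one computes $xb=\lambda_b x=bx$, so $x\in B'\cap A=B$ and $x\in pBp=\mathbb Cp$. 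Thus $pAp=\mathbb Cp$. Since $p\neq0$, the corner $p\,\mathcal K(H)\,p=\mathcal K(pH)$ is a nonzero subalgebra of $\mathbb Cp$, which is impossible unless $\dim pH=1$; so every atom of $B$ is a rank-one projection. Writing the atoms as $p_n=\xi_n\otimes\xi_n^\ast$ with $\xi_n$ orthonormal, I would show they form a basis: if $\eta$ were a unit vector orthogonal to all $\xi_n$, then $e=\eta\otimes\eta^\ast\in\mathcal K(H)\subseteq A$ satisfies $p_ne=0$, so $p_n\Upsilon(e)=\Upsilon(p_ne)=0$ for all $n$, where $\Upsilon$ is the faithful conditional expectation of Definition~\ref{Def.Cartan}; as an element of $B\cong\ell_\infty(\mathbb N)$ killed by every atom is zero, $\Upsilon(e)=0$, contradicting faithfulness. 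Hence $\{\xi_n\}_n$ is an orthonormal basis of $H$.

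Next I would transport everything to $\ell_2(\mathbb N)$ via the unitary $W\colon H\to\ell_2(\mathbb N)$, $W\xi_n=\delta_n$, setting $\tilde A=WAW^\ast$ and noting $WBW^\ast=\ell_\infty(\mathbb N)$ is the standard diagonal. The goal is to realise $\tilde A=\cstu(\mathbb N,\mathcal F)$ for a u.l.f.\ coarse structure $\mathcal F$ with $\ell_\infty(\mathbb N)$ as diagonal. Let $\mathcal F$ be generated by the supports $\supp(u)$ of the partial-isometry normalizers $u\in N_A(B)$, computed in the basis $\{\delta_n\}$. The crucial computation is that each such $u$ is a \emph{twisted partial permutation}: the normalizer relation together with $pAp=\mathbb Cp$ forces $up_nu^\ast=p_{\sigma(n)}$ for a partial bijection $\sigma$, whence $u\xi_n\in\mathbb C\xi_{\sigma(n)}$ and $\supp(u)$ is the graph of $\sigma$, which is u.l.f. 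Consequently $\mathcal F$ is u.l.f.; and since $A$ is generated by $N_A(B)$, the dense $\ast$-subalgebra spanned by products of normalizers and elements of $B$ consists of finite-propagation operators, giving $\tilde A\subseteq\cstu(\mathbb N,\mathcal F)$. Conversely, factoring $u=v_{\supp(u)}d$ with $d\in\ell_\infty(\mathbb N)$ recovers each partial translation $v_{\supp(u)}=ud^\ast\in\tilde A$, and combining this with Lemma~\ref{lemma:splitting} yields $\cstu(\mathbb N,\mathcal F)\subseteq\tilde A$. Thus $W$ implements an isomorphism $\cstu(X)\cong\cstu(\mathbb N,\mathcal F)$ carrying $B$ to $\ell_\infty(\mathbb N)$. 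I expect this realisation to be the main obstacle: uniformly controlling the supports of all normalizers and verifying both inclusions is the technical heart of the argument, and it is precisely where the Kumjian--Renault/White--Willett structure theory for Cartan pairs enters.

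Finally I would run the rigidity machinery. By Theorem~\ref{ThmNuclearPropA}, property A of $X$ makes $\cstu(X)$ nuclear; transporting nuclearity across the isomorphism and applying Theorem~\ref{ThmNuclearPropA} again shows $(\mathbb N,\mathcal F)$ has property A, so by Corollary~\ref{CorPropA} both $\cstu(X)$ and $\cstu(\mathbb N,\mathcal F)$ have only compact ghost projections. As $X$ is metric, $\mathcal E$ is countably generated, so Corollary~\ref{cor:metr} applied to the isomorphic algebras $\cstu(\mathbb N,\mathcal F)$ and $\cstu(X)$ forces $\mathcal F$ to be countably generated; that is, $(\mathbb N,\mathcal F)$ is metrizable. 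Choosing a countable cofinal family $(F_k)_k$ generating $\mathcal F$, we get $\cstu(\mathbb N,\mathcal F)=C^\ast\big(\ell_\infty(\mathbb N),\{v_{F_k}:k\in\mathbb N\}\big)$, so $\ell_\infty(\mathbb N)$ is co-separable in $\cstu(\mathbb N,\mathcal F)$. Pulling this back through $W$, the countable set $S=\{W^\ast v_{F_k}W:k\in\mathbb N\}$ witnesses that $B$ is co-separable in $\cstu(X)$, as required.
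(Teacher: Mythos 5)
Your skeleton is the same as the paper's: realise $B$ as the standard diagonal $\ell_\infty(\N)$ inside a uniform Roe algebra over $\N$, transfer property A through nuclearity (Theorem \ref{ThmNuclearPropA}), apply Corollary \ref{cor:metr} to conclude that the new coarse structure is countably generated, and read off co-separability. The difference is that the paper obtains the two nontrivial ingredients by citation --- the identification $\cstu(X)\cong\cstu(\N,\cE_B)$ is \cite[Proposition 4.15]{WhiteWillett2017} and the endgame is \cite[Lemma 4.19]{WhiteWillett2017} --- whereas you reprove them by hand. Your preparation is correct (atoms of $B$ are rank one because $p\cstu(X)p=\C p$ and $\cK(\ell_2(X))\subseteq\cstu(X)$; the atoms span $\ell_2(X)$ by faithfulness of the expectation; normalizers are twisted partial permutations). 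The genuine gap is in the inclusion $\tilde A\subseteq\cstu(\N,\mathcal F)$, which you justify by asserting that the dense $*$-subalgebra spanned by products of normalizers and elements of $B$ ``consists of finite-propagation operators''. That assertion is false: $N_A(B)$ contains normalizers that are not partial isometries, whose entries decay to $0$ along pairs of unbounded displacement, and such elements have \emph{infinite} propagation with respect to your $\mathcal F$. Concretely, take $X=\N$ with its usual metric, $A=\cstu(\N)$, $B=\ell_\infty(\N)$, and $u=\sum_n \tfrac1n e_{a_nb_n}$, where the pairs $(a_n,b_n)$ are disjoint and $|a_n-b_n|\to\infty$. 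Then $u\in A$ (its finite truncations converge to it in norm) and $u\in N_A(B)$, but $\supp(u)$ has unbounded displacement. On the other hand, a partial isometry normalizing $\ell_\infty(\N)$ has unimodular matrix entries, and an operator with unimodular entries at arbitrarily large displacement is at distance at least $1$ from every finite-propagation operator, hence not in $\cstu(\N)$; so in this example partial-isometry normalizers automatically have finite propagation and your $\mathcal F$ equals $\cE_d$. Thus $u$ lies in your spanning $*$-algebra but has infinite $\mathcal F$-propagation, and the argument for $\tilde A\subseteq \cstu(\N,\mathcal F)$ fails as written.

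The inclusion itself is true, but proving it requires exactly the $\eps$-cutoff that White--Willett build into their definition and that the paper's proof inherits by citing them: $\cE_B$ is generated by the level sets $\{(i,j)\mid \norm{p_iTp_j}>\eps\}$, not by supports. The repair is a truncation argument: for a normalizer $u$ with $u\xi_n=c_n\xi_{\sigma(n)}$, let $A_\eps=\{n\mid |c_n|>\eps\}$; then $\norm{u-u\chi_{A_\eps}}\leq\eps$, and $u\chi_{A_\eps}=wd$, where $d\in B$ is the diagonal with entries $c_n$ on $A_\eps$ and $w=u\chi_{A_\eps}d'$ (with $d'\in B$ the diagonal with entries $c_n^{-1}$ on $A_\eps$, bounded by $\eps^{-1}$) is a partial-isometry normalizer lying in $A$. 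Hence $\supp(u\chi_{A_\eps})=\supp(w)\in\mathcal F$, and $u\in\cstu(\N,\mathcal F)$ as a norm limit of finite-propagation operators; with this inserted, your identification $\tilde A=\cstu(\N,\mathcal F)$ goes through. A smaller imprecision occurs at the very end: it is not clear that an operator supported on a sub-partial-bijection of $F_k$ can be recovered from $v_{F_k}$ and diagonal elements, so instead of $S=\{v_{F_k}\}$ you should take $S$ to consist of the partial translations of the pieces obtained by splitting each $F_k$ via Lemma \ref{lemma:splitting}; this set is still countable and makes the final step correct.
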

\begin{proof}
Let $(X,d)$ and $B$ be as in the hypothesis. As $B\cong\ell_\infty(\NN)$, for each $i\in\N$, let $p_i\in\N$ be the minimal projection corresponding to the image of $e_i\in \ell_\infty(\N)$ under this isomorphism. Let $\mathcal E_B$ be the coarse structure on $\NN$ generated by the family of subsets
\[
\Big\{\{(i,j)\in\N\times \N\mid \norm{p_iTp_j}>\eps\}\mid \eps>0, T\in N_{\cstu(X)}(B)\Big\}.
\]
By \cite[Proposition 4.15]{WhiteWillett2017}, $\cstu(X,\cE_d)$ and $\cstu(\NN,\mathcal E_B)$ are isomorphic. As~$(X,\cE_d)$ has property~A, by applying Theorem~\ref{ThmNuclearPropA} twice we conclude that $(\NN,\cE_B)$ has property A. The hypotheses of Corollary~\ref{cor:metr} are satisfied,  hence~$\mathcal E_B$ is countably generated. By \cite[Lemma 4.19]{WhiteWillett2017}, $B$ is co-separable in~$\cstu(X)$.
\end{proof}

\subsection{Nonmetrizable reducts} 
\label{S.nonmetrizable} 

We conclude by presenting an example of uniform Roe algebras $\cstu(X)\subseteq \cstu(Y)$ which share a common Cartan masa	 and so that $Y$ is metrizable but $X$ is not. Note that there are $2^{2^{\aleph_0}}$ subsets of the power set of $\bbN^2$, and it is not hard to show that  there are $2^{2^{\aleph_0}}$ coarse structures on $\bbN$ (see Proposition \ref{P.nonmetrizable} for a stronger statement). 
Since every metrizable coarse structure is countably generated, only $2^{\aleph_0}$ of these spaces are metrizable. 

Let $\cE_{|\cdot|}$ denote the metrizable coarse structure induced by the standard metric on $\bbN$. If $E\subseteq\N\times\N$, consider the set of `splitting' points of $E$, given by 
\[
S(E)=\{n\in\N\mid \text{ if } i<n\leq j\text{ then }(i,j)\notin E\text{ and } (j,i)\notin E\}
\]
If $\mathcal F$ is a filter on $\N$, consider 
\[
\mathcal E_{\mathcal F}=\{E\in\cE_{|\cdot|}\mid S(E)\in \mathcal F\}
\]
(see Figure \ref{FigXY} for an illustration of $S(E)$). 
Note that $\mathcal F=\{S(E)\mid E\in\mathcal E_{\mathcal F}\}$, so if $\mathcal F_1\neq\mathcal F_2$ are distinct filters, then $\mathcal E_{\mathcal F_1}\neq \mathcal E_{\mathcal F_2}$. The following proposition summarises the properties of $\mathcal E_{\mathcal F}$.

\begin{figure}[ht]
   \begin{tikzpicture}[scale=0.23]
    \coordinate (Origin)   at (0,0);
   \coordinate (XAxisMin) at (0,0);
    \coordinate (XAxisMax) at (17,0);
    \coordinate (YAxisMin) at (0,0);
    \coordinate (YAxisMax) at (0,17);
       \draw [<->,thick] (0,17) node (yaxis) [left] {$\N$}
        |- (17,0) node (xaxis) [below] {$\N$};

 \foreach \x in {0,1,2, 3,4,5,6,7,8,9,10,11,12,13,14,15,16}{
      \foreach \y in {0,1,2, 3,4,5,6,7,8,9,10,11,12,13,14,15,16}{
        \node[draw,circle,inner sep=0pt,fill] at (\x,\y) {};
       }}  
  
     \foreach \x in {0,1,2}{
      \foreach \y in {0,1,2}{
        \node[draw,circle,inner sep=1pt,fill] at (\x,\y) {};
       }} 
     \foreach \x in {5,6,7}{
      \foreach \y in {5,6,7}{
        \node[draw,circle,inner sep=1pt,fill] at (\x,\y) {};
        }}
     \foreach \x in {9,10,11}{
      \foreach \y in {9,10,11}{
        \node[draw,circle,inner sep=1pt,fill] at (\x,\y) {};
}}
   
       \foreach \x in {14,15,16}{
      \foreach \y in {14,15,16}{
        \node[draw,circle,inner sep=1pt,fill] at (\x,\y) {};
}}

     \foreach \x in {3}{
      \foreach \y in {3}{
        \node[draw,circle,inner sep=1pt,fill] at (\x,\y) {};
       }} 
     \foreach \x in {4}{
      \foreach \y in {4}{
        \node[draw,circle,inner sep=1pt,fill] at (\x,\y) {};
       }}              
     \foreach \x in {8}{
      \foreach \y in {8}{
        \node[draw,circle,inner sep=1pt,fill] at (\x,\y) {};
}}
     \foreach \x in {12}{
      \foreach \y in {12}{
        \node[draw,circle,inner sep=1pt,fill] at (\x,\y) {};
}}
     \foreach \x in {13}{
      \foreach \y in {13}{
        \node[draw,circle,inner sep=1pt,fill] at (\x,\y) {};
}}

  \end{tikzpicture}
  \begin{tikzpicture}[scale=0.23]
    \coordinate (Origin)   at (0,0);
   \coordinate (XAxisMin) at (0,0);
    \coordinate (XAxisMax) at (17,0);
    \coordinate (YAxisMin) at (0,0);
    \coordinate (YAxisMax) at (0,17);
 \draw [<->,thick] (0,17) node (yaxis) [left] {$\N$}
        |- (17,0) node (xaxis) [below] {$\N$};

 \foreach \x in {0,1,2, 3,4,5,6,7,8,9,10,11,12,13,14,15,16}{
      \foreach \y in {0,1,2, 3,4,5,6,7,8,9,10,11,12,13,14,15,16}{
        \node[draw,circle,inner sep=0pt,fill] at (\x,\y) {};
       }}  
 
  \foreach \x in {0}{
      \foreach \y in {0}{
        \node[draw,gray,circle,inner sep=1pt] at (\x,\y) {};
       }} 
     \foreach \x in {3}{
      \foreach \y in {3}{
        \node[draw,gray,circle,inner sep=1pt] at (\x,\y) {};
       }} 
     \foreach \x in {4}{
      \foreach \y in {4}{
        \node[draw,gray,circle,inner sep=1pt] at (\x,\y) {};
       }}        
    \foreach \x in {5}{
      \foreach \y in {5}{
        \node[draw,gray,circle,inner sep=1pt] at (\x,\y) {};
       }}

     \foreach \x in {8}{
      \foreach \y in {8}{
        \node[draw,gray,circle,inner sep=1pt] at (\x,\y) {};
}}
   
     \foreach \x in {9}{
      \foreach \y in {9}{
        \node[draw,gray,circle,inner sep=1pt] at (\x,\y) {};
}}
     \foreach \x in {12}{
      \foreach \y in {12}{
        \node[draw,gray,circle,inner sep=1pt] at (\x,\y) {};
}}
     \foreach \x in {13}{
      \foreach \y in {13}{
        \node[draw,gray,circle,inner sep=1pt] at (\x,\y) {};
}}
     \foreach \x in {14}{
      \foreach \y in {14}{
        \node[draw,gray,circle,inner sep=1pt] at (\x,\y) {};
}}

     \foreach \x in {0,3,4,5,8,9,12,13,14}{
      \foreach \y in {0}{
        \node[draw,circle,inner sep=1pt,fill] at (\x,\y) {};
}}

  \end{tikzpicture}\caption{Large black dots represnt an  entourage $E\subseteq \N\times\N$ in $\cE_{|\cdot|}$  (left) and $S(E)\subseteq \N\equiv\N\times \{0\}$ (right).}\label{FigXY}
\end{figure}

Recall that a filter is countably generated if there is a countable set $\{A_n\}_n\subseteq\mathcal F$ such that for all $B\in\mathcal F$ there is $n$ such that $A_n\subseteq B$. Among non countably generated filters are nonprincipal ultrafilters.
\begin{lemma}\label{lemma:uf}
Let $\mathcal F$ be a filter. Then
\begin{enumerate}
\item \label{1.filter} $\mathcal E_{\mathcal F}$ is a coarse structure on $\N$, in particular, is a reduct of $\mathcal E_{|\cdot|}$;
\item \label{2.filter} if $\mathcal F$ contains all cofinite sets, $\mathcal E_{\mathcal F}$ is connected;
\item \label{3.filter} If $\mathcal E_{\mathcal F}$ is metrizable, then $\mathcal F$ is countably generated.\footnote{Metrizability of $\mathcal E_\mathcal F$ and the fact that $\mathcal F$ is countably generated are in fact equivalent statements; as we will not need this, we leave the proof to the reader.}
\end{enumerate}
\end{lemma}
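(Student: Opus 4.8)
The plan is to reduce all three clauses to a handful of elementary properties of the \emph{splitting-set} operation $E\mapsto S(E)$ together with the filter axioms for $\mathcal F$. The properties I will record are: $S$ is \emph{antitone}, so $F\subseteq E$ implies $S(E)\subseteq S(F)$ (enlarging $E$ can only destroy splitting points); $S(E^{-1})=S(E)$, immediate from the symmetric defining condition; $S(E\cup F)=S(E)\cap S(F)$; and $S(\Delta_\N)=\N$. For clause \eqref{1.filter}, since $\mathcal E_{\mathcal F}\subseteq\cE_{|\cdot|}$ and the latter is already a coarse structure, it suffices to verify the five axioms relative to $\cE_{|\cdot|}$, and each reduces to a filter statement via these identities: the diagonal via $S(\Delta_\N)=\N\in\mathcal F$, closure under inverses via $S(E^{-1})=S(E)$, closure under subsets via antitonicity and upward closure of $\mathcal F$, and closure under finite unions via $S(E\cup F)=S(E)\cap S(F)$ and closure of $\mathcal F$ under finite intersections. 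The only axiom needing an argument is composition, where I will prove $S(E)\cap S(F)\subseteq S(E\circ F)$: fixing $n$ in the left-hand side and a witness $y$ for a hypothetical crossing pair $(i,j)\in E\circ F$ with $i<n\le j$, the dichotomy $y<n$ versus $y\ge n$ contradicts $n\in S(F)$ (through $(y,j)$) or $n\in S(E)$ (through $(i,y)$), and symmetrically for $(j,i)$; upward closure then gives $S(E\circ F)\in\mathcal F$.

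Clause \eqref{2.filter} is short: a coarse structure is connected exactly when it contains every finite subset of $\N\times\N$, so I will fix a finite $E$ and note that every $n$ exceeding all coordinates appearing in $E$ is a splitting point, whence $S(E)$ is cofinite; if $\mathcal F$ contains all cofinite sets then $S(E)\in\mathcal F$ by upward closure, so $E\in\mathcal E_{\mathcal F}$ and connectedness follows.

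Clause \eqref{3.filter} is the substantive one, and the step I expect to be the main obstacle is converting a countable cofinal family for $\mathcal E_{\mathcal F}$ into a countable base for $\mathcal F$. If $\mathcal E_{\mathcal F}$ is metrizable then it is countably generated---a compatible metric furnishes countably many generators---so Lemma~\ref{L.infty-valued-metric} yields a countable cofinal family $\{E_n\}_n\subseteq\mathcal E_{\mathcal F}$. I claim $\{S(E_n)\}_n$ generates $\mathcal F$. Each $S(E_n)$ lies in $\mathcal F$ by definition of $\mathcal E_{\mathcal F}$. Conversely, given $B\in\mathcal F$, the remark preceding the lemma realizes $B$ as a splitting set, concretely via $E=\{(m-1,m)\mid m\in\N\setminus B\}$, which belongs to $\cE_{|\cdot|}$ (its edges all have length one) and satisfies $S(E)=B$, so $E\in\mathcal E_{\mathcal F}$; cofinality gives $E\subseteq E_n$ for some $n$, and antitonicity yields $S(E_n)\subseteq S(E)=B$. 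Hence every member of $\mathcal F$ contains some $S(E_n)$, so $\mathcal F$ is countably generated. The one delicate point is that the least element of $\N$ is vacuously a splitting point of every set, so $E\mapsto S(E)$ recovers $B$ only up to $\min\N$; this is dispatched by a one-line case distinction on whether $\N\setminus\{\min\N\}\in\mathcal F$, which either forces every member of $\mathcal F$ to contain $\min\N$ or permits deleting $\min\N$ from each $S(E_n)$.
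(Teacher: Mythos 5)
Your proof is correct and takes essentially the same route as the paper's: the same splitting-set identities (antitonicity, symmetry, $S(E)\cap S(F)\subseteq S(E\cup F)$, and the witness dichotomy for $S(E)\cap S(F)\subseteq S(E\circ F)$) for clause \eqref{1.filter}, cofiniteness of $S(E)$ for finite $E$ in clause \eqref{2.filter}, and the same $E_B=\{(i-1,i)\mid i\notin B\}$ construction combined with Lemma~\ref{L.infty-valued-metric} for clause \eqref{3.filter}. Your observation that $\min\N$ is vacuously a splitting point, so that $S(E_B)=B\cup\{\min\N\}$ rather than $B$, together with your case distinction, in fact patches a small imprecision that the paper's own proof glosses over.
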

\begin{proof}
By the definition of $S(E)$, we have that $S(E)=S(E^{-1})$ for all $E\subseteq\N\times\N$, so $\mathcal E_{\mathcal F}$ is symmetric. Moreover $S(E)\cap S(F)\subseteq S(E\cup F)$ and, if $E\subseteq F$, then $S(F)\subseteq S(E)$ for all $E,F\subseteq\N\times\N$, therefore $\mathcal E_{\mathcal F}$ is closed by finite unions and subsets, as $\mathcal F$ is closed by finite intersections and supersets. It remains to show that $\mathcal E_{\mathcal F}$ is closed by finite composition. For this, let $E,F\subseteq\N\times\N$ and $n\in S(E)\cap S(F)$. If $i<n\leq j$, then there is no $k\geq n$ such that $(i,k)\in E$. Similarly, there is no $k<n$ such that $(k,j)\in F$, and therefore we cannot have that $(i,j)\in E\circ F$. This shows that $S(E\circ F)\supseteq S(E)\cap S(F)$, hence if $E,F\in\mathcal E_{\mathcal F}$, so does $E\circ F$. This concludes the proof of (1).

\eqref{2.filter} is immediate from the fact that if $E\subseteq\N\times\N$ is finite, then $S(E)$ is cofinite. Hence if $\mathcal F$ contains all cofinite sets, then all finite sets are in $\mathcal E_{\mathcal F}$.

For \eqref{3.filter}, we use the characterisation of metrizability provided by Lemma~\ref{L.infty-valued-metric}. Suppose that $\mathcal E_{\mathcal F}$ has a countably cofinal set $\{E_n\}_n$, and let $A_n=S(E_n)$. Let $B\in\mathcal F$, and $E_B=\{(i-1,i)\mid i\notin B\}$. As $S(E_B)=B$, then $E_B\in\mathcal E_{\mathcal F}$, hence there is $n$ such that $E_B\subseteq E_n$, meaning that $A_n\subseteq B$. This shows that $\{A_n\}_n$ generates $\mathcal F$, and therefore \eqref{3.filter} holds.
\end{proof}

\begin{proposition}\label{P.nonmetrizable} 
Let $\cE_{|\cdot|}$ denote the metrizable coarse structure induced by the standard metric on $\bbN$. 
\begin{enumerate}
\item \label{1.P.nonmetrizable} There exists a nonmetrizable connected reduct of $\cE_{|\cdot|}$. 
\item \label{2.P.nonmetrizable} There are $2^{2^{\aleph_0}}$ coarsely inequivalent reducts as in \eqref{1.P.nonmetrizable}. 
\end{enumerate}
\end{proposition}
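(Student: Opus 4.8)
The plan is to build all the required reducts from the filter construction $\mathcal E_{\mathcal F}$ of Lemma~\ref{lemma:uf}, letting $\mathcal F$ range over free ultrafilters on $\N$. For \eqref{1.P.nonmetrizable}, fix a free (nonprincipal) ultrafilter $\mathcal U$ on $\N$. It contains every cofinite set, so $\mathcal E_{\mathcal U}$ is connected by Lemma~\ref{lemma:uf}\eqref{2.filter}. Moreover $\mathcal U$ is not countably generated: given putative generators, which we may take to be a decreasing sequence $A_n$ (all infinite, since $\mathcal U$ is free), choose $a_0<a_1<\cdots$ with $a_n\in A_n$ and let $B$ be the even-indexed half of $\{a_n\mid n\in\N\}$; each $A_n$ contains $a_m$ for all $m\geq n$ and hence meets both $B$ and $\N\setminus B$, so neither $B$ nor its complement contains any $A_n$, and thus neither lies in $\mathcal U$, contradicting that $\mathcal U$ is an ultrafilter. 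By the contrapositive of Lemma~\ref{lemma:uf}\eqref{3.filter}, $\mathcal E_{\mathcal U}$ is then nonmetrizable, settling \eqref{1.P.nonmetrizable}.

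For \eqref{2.P.nonmetrizable} I would first invoke the classical fact (Pospíšil) that there are $2^{2^{\aleph_0}}$ free ultrafilters on $\N$. Since $\mathcal F=\{S(E)\mid E\in\mathcal E_{\mathcal F}\}$, the assignment $\mathcal F\mapsto\mathcal E_{\mathcal F}$ is injective, so $\{\mathcal E_{\mathcal U}\mid \mathcal U\text{ a free ultrafilter on }\N\}$ is a family of $2^{2^{\aleph_0}}$ pairwise \emph{distinct} connected nonmetrizable reducts of $\mathcal E_{|\cdot|}$, each handled by the previous paragraph. The real content is to pass from distinctness to \emph{coarse inequivalence}, and for this I would bound the size of coarse-equivalence classes.

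The key step, which I expect to be the main obstacle, is the claim that every coarse-equivalence class of coarse structures on $\N$ has cardinality at most $2^{\aleph_0}$. I would prove it by showing that a coarse structure is recovered from any coarse equivalence into a fixed one: if $g\colon(\N,\mathcal E')\to(\N,\mathcal E_0)$ is a coarse equivalence, then $\mathcal E'=\{E'\subseteq\N\times\N\mid (g\times g)[E']\in\mathcal E_0\}$. The inclusion $\subseteq$ is coarseness of $g$; for $\supseteq$, take a coarse inverse $h$, note that $D=\{(x,hg(x))\mid x\in\N\}\in\mathcal E'$, and observe that any $(x,y)\in E'$ lies in $D\circ (h\times h)[(g\times g)[E']]\circ D^{-1}$, an entourage of $\mathcal E'$ because $h$ is coarse and $\mathcal E'$ is closed under composition and subsets. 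Thus $\mathcal E'$ is determined by $\mathcal E_0$ together with $g$; fixing a representative $\mathcal E_0$ of a class and sending each member $\mathcal E'$ of the class to a witnessing $g_{\mathcal E'}\in\N^{\N}$ is therefore injective, and since $|\N^{\N}|=2^{\aleph_0}$ the class has size at most $2^{\aleph_0}$.

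Finally I would combine the two counts by cardinal arithmetic. Coarse equivalence partitions the $2^{2^{\aleph_0}}$ structures $\mathcal E_{\mathcal U}$ into classes, each of size at most $2^{\aleph_0}$ by the previous step. If there were at most $2^{\aleph_0}$ classes, the whole family would have size at most $2^{\aleph_0}\cdot 2^{\aleph_0}=2^{\aleph_0}<2^{2^{\aleph_0}}$, a contradiction; hence there are $2^{2^{\aleph_0}}$ classes. Choosing one $\mathcal E_{\mathcal U}$ from each class yields $2^{2^{\aleph_0}}$ pairwise coarsely inequivalent reducts, each connected and nonmetrizable as in \eqref{1.P.nonmetrizable}. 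All the substance sits in the determination claim of the previous paragraph; everything else is standard cardinal arithmetic together with Lemma~\ref{lemma:uf} and the abundance of ultrafilters on $\N$.
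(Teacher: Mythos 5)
Your proposal is correct and follows essentially the same route as the paper: nonprincipal ultrafilters fed into Lemma~\ref{lemma:uf}, distinctness of the structures $\cE_{\cU}$ via recovery of the filter from the coarse structure, a bound of $2^{\aleph_0}$ on the size of each coarse-equivalence class of coarse structures on $\N$ (the determination claim $\cE'=\{E'\mid (g\times g)[E']\in\cE_0\}$ that you prove is exactly what the paper compresses into ``there are only $2^{\aleph_0}$ functions from $\N$ into $\N$''), and a covering count. One small repair to the last step: ruling out ``at most $2^{\aleph_0}$ classes'' only yields more than $2^{\aleph_0}$ classes, so instead assume the number of classes $\kappa$ satisfies $\kappa<2^{2^{\aleph_0}}$ and get the same contradiction from $\kappa\cdot 2^{\aleph_0}=\max(\kappa,2^{\aleph_0})<2^{2^{\aleph_0}}$, which is the paper's formulation of the covering argument.
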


Before proving Proposition~\ref{P.nonmetrizable}, we state an equivalent reformulation that shows that the conclusion of Theorem~\ref{TheoremMainEmbed} fails if the image is not assumed to be hereditary. 

\begin{corollary}[cf. Proposition 2.4 of \cite{BragaFarahVignati2019}] There exists a coarse, connected, u.l.f. space $Y$ with property A such that $\cstu(Y)$ 
	is isomorphic to a \cstar-subalgebra of $\cstu(\bbN)$ via an isomorphism that sends $\ell_\infty(Y)$ onto $\ell_\infty(\bbN)$ but~$Y$ does not coarsely embed into $\bbN$. There are $2^{2^{\aleph_0}}$ coarsely inequivalent coarse spaces $(Y,\cE)$ with this property. \label{C.nonmetrizable} 
\end{corollary}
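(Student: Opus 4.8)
The plan is to obtain the corollary as a repackaging of Proposition~\ref{P.nonmetrizable}, translating its purely geometric clauses into the operator-algebraic language used here. First I would invoke Proposition~\ref{P.nonmetrizable}\eqref{1.P.nonmetrizable} to fix a nonmetrizable connected reduct $\cE$ of $\cE_{|\cdot|}$ and set $Y=(\bbN,\cE)$; being a reduct of the u.l.f.\ connected structure $\cE_{|\cdot|}$, the space $Y$ is itself u.l.f.\ and connected. The required embedding is then the identity inclusion: since $\cE\subseteq\cE_{|\cdot|}$, Proposition~\ref{PropInclusionCoarseStructure} yields $\cstu(Y)=\cstu(\bbN,\cE)\subseteq\cstu(\bbN,\cE_{|\cdot|})=\cstu(\bbN)$, realizing $\cstu(Y)$ as a \cstar-subalgebra of $\cstu(\bbN)$. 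Because the diagonal belongs to every coarse structure, $\ell_\infty(\bbN)\subseteq\cstu(\bbN,\cE)$, so this inclusion restricts to the identity on $\ell_\infty(\bbN)=\ell_\infty(Y)$ and hence carries $\ell_\infty(Y)$ onto $\ell_\infty(\bbN)$, exactly as demanded.

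Next I would verify the two remaining properties. For property A, I would note that $(\bbN,\cE_{|\cdot|})$ has property A, being a subspace of the amenable group $\Z$ (alternatively, $\cstu(\bbN)$ is nuclear, so Theorem~\ref{ThmNuclearPropA} applies); by Corollary~\ref{CorPropA} property A passes to every reduct, in particular to $\cE$, so $Y$ has property A. For the failure of coarse embeddability, I would argue by contradiction: if $Y$ coarsely embedded into $(\bbN,\cE_{|\cdot|})$, then since $\cE_{|\cdot|}$ is countably generated, Lemma~\ref{LemmaSize} would force $\cE$ to be countably generated as well; as $\cE$ is connected, Lemma~\ref{L.infty-valued-metric} would then make it metrizable, contradicting the choice of $Y$. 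The count of $2^{2^{\aleph_0}}$ coarsely inequivalent such spaces is then immediate from Proposition~\ref{P.nonmetrizable}\eqref{2.P.nonmetrizable}, since each of the pairwise coarsely inequivalent reducts produced there is a space $Y$ of the above form, and two such $Y$ are coarsely inequivalent precisely because their underlying reducts are.

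All of the genuine content sits upstream, in Proposition~\ref{P.nonmetrizable}, which the corollary uses as a black box; within the corollary itself every step is bookkeeping, the only conceptual point being the passage from nonmetrizability to non-embeddability via Lemma~\ref{LemmaSize}. When proving Proposition~\ref{P.nonmetrizable} I would use the filter construction $\cE_{\cF}$ of Lemma~\ref{lemma:uf}: a filter $\cF$ that contains the cofinite filter but is not countably generated (for instance a nonprincipal ultrafilter) yields a connected, nonmetrizable reduct, giving \eqref{1.P.nonmetrizable}, while \eqref{2.P.nonmetrizable} would come from an independent family of $2^{2^{\aleph_0}}$ such filters. The hard part lies there, in certifying coarse inequivalence across the whole family, rather than in the corollary.
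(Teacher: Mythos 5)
Your proposal is correct and takes essentially the same route as the paper's proof, which likewise takes $Y$ from Proposition~\ref{P.nonmetrizable}, embeds $\cstu(Y)$ into $\cstu(\bbN)$ via the identity map, and observes that property~A passes to reducts while nonmetrizability rules out a coarse embedding into $\bbN$; you have simply spelled out the steps the paper declares ``immediate''. Two citation-level remarks: connectedness of $Y$ is part of the statement of Proposition~\ref{P.nonmetrizable} rather than a consequence of being a reduct (connectedness, unlike uniform local finiteness, does not pass to reducts), and ``property~A passes to every reduct'' is not literally an item of Corollary~\ref{CorPropA} (whose reduct clauses concern countably generated reducts) but follows from its equivalence \eqref{Item1}$\Leftrightarrow$\eqref{Item3} together with the fact that compactness of ghosts passes to reducts via Proposition~\ref{PropInclusionCoarseStructure}.
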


\begin{proof} Let $Y$ be any of the the spaces constructed in Proposition~\ref{P.nonmetrizable}, and embed $\cstu(Y)$ into $\cstu(\bbN)$ via the identity map. The space $Y$ has property~A since it is a reduct of a property A space, and the other properties are immediate.
\end{proof}

\begin{proof}[Proof of Proposition~\ref{P.nonmetrizable}] 
\eqref{1.P.nonmetrizable}	Let $\cU$ be a nonprincipal ultrafilter on $\bbN$. Lem\-ma~\ref{lemma:uf} \eqref{1.filter} and \eqref{2.filter} together imply that $\cE_\cU$ is a connected coarse structure included in~$\cE$. Since $\cU$ is not countably generated, $\cE_\cU$ is not metrizable by Lem\-ma~\ref{lemma:uf}~\eqref{3.filter}.

\eqref{2.P.nonmetrizable} There are $2^{2^{\aleph_0}}$ ultrafilters on $\bbN$. This is a standard consequence of the existence of an independent family $\cA$ of subsets of $\bbN$ of cardinality~$2^{\aleph_0}$ (see e.g., \cite[Chapter 9]{Fa:STCstar}). We claim that if $\cU$ and $\cV$ are distinct ultrafilters, then $\cE_\cU\neq \cE_\cV$. Assume $\cU\neq \cV$. Then there exists $A\in \cU$ such that $\bbN\setminus A\in \cV$. The set
 $E_1=\{(i,j)\mid |i-j|\leq 1\mid \max(i,j)\in A\}$ belongs to $\cE_\cV$ and the set  $E_2=\{(i,j)\mid |i-j|\leq 1\mid \max(i,j)\notin A\}$ belongs to $\cE_\cU$. These two sets cover the entourage that generates the standard metric structure on~$\bbN$. Therefore~\eqref{1.P.nonmetrizable} implies  $E_1\notin \cE_\cU$ and $E_2\notin \cE_\cU$, hence $\cE_\cU\neq \cE_\cV$.      Some of the spaces $\cE_\cU$  are coarsely equivalent (like those corresponding to Rudin--Keisler equivalent ultrafilters---see e.g., \cite[\S 9.4]{Fa:STCstar}). However, there are only~$2^{\aleph_0}$ functions from $\bbN$ into $\bbN$, and therefore every coarse equivalence class of spaces on $\bbN$ contains at most $2^{\aleph_0}$ elements. Since $2^{2^{\aleph_0}}$ cannot be covered by fewer than $2^{2^{\aleph_0}}$ sets of cardinality $2^{\aleph_0}$ each, there are $2^{2^{\aleph_0}}$ coarse equivalence classes among the spaces of the form $\cE_{\cU}$. 
\end{proof}

\begin{acknowledgments}
This work started when BB visited AV at KU Leuven, and continued when AV visited BB and IF in Toronto. These visits were supported by AV's FWO grant and IF's NSERC grant. IF is partially supported by NSERC. AV is partially supported by the ANR Project AGRUME (ANR-17-CE40-0026). The authors are grateful to the funding bodies and the hosting institutions. 
\end{acknowledgments}

\end{document}